\definecolor{winered}{rgb}{0.5,0.2,0}
\algnewcommand\And{\textbf{and}}
\algnewcommand\Or{\textbf{or}}
\algnewcommand\Not{\textbf{not}}
\algnewcommand\In{\textbf{in}}
\algnewcommand\Each{\textbf{each}}
\newcommand{\squishlist}{
 \begin{list}{$\bullet$}
  { \setlength{\itemsep}{0pt}
     \setlength{\parsep}{3pt}
     \setlength{\topsep}{3pt}
     \setlength{\partopsep}{0pt}
     \setlength{\leftmargin}{2.5em}
     \setlength{\labelwidth}{1em}
     \setlength{\labelsep}{0.5em} } }
\newcommand{\squishlisttwo}{
 \begin{list}{$\triangleright$}
  { \setlength{\itemsep}{0pt}
     \setlength{\parsep}{0pt}
    \setlength{\topsep}{0pt}
    \setlength{\partopsep}{0pt}
    \setlength{\leftmargin}{2em}
    \setlength{\labelwidth}{1.5em}
    \setlength{\labelsep}{0.5em} } }
\newcommand{\squishend}{
  \end{list}  }
\definecolor{verbgray}{gray}{0.9}
\definecolor{shadecolor}{rgb}{.91, .91, .91}
\definecolor{bordercolor}{rgb}{.8, .8, .6}
\definecolor{ultramarine}{rgb}{0, 0.125, 0.376}
 \definecolor{arsenic}{rgb}{0.23, 0.27, 0.29}
 \definecolor{beige}{rgb}{0.96, 0.96, 0.86}
\definecolor{amber}{rgb}{1.0, 0.75, 0.0}
\definecolor{orange}{rgb}{1.0, 0.49, 0.0}
\definecolor{dandelion}{rgb}{0.94, 0.88, 0.19}
  \definecolor{indiagreen}{rgb}{0.07, 0.53, 0.03}
  \definecolor{huntergreen}{rgb}{0.21, 0.37, 0.23}
\newcommand{\blue}[1] {\textcolor{blue}{#1}}
\newcommand{\red}[1] {\textcolor{red}{#1}}
\newcommand{\bblue}[1] {{\bf \textcolor{blue}{#1}}}
\newcommand{\rred}[1] {{\bf \textcolor{red}{#1}}}
\newcommand{\defo}[1] {\emph{\textcolor{blue}{#1}}}
\definecolor{shadecolor}{rgb}{.9, .9, .9}
    \newenvironment{frshaded*}{%
    \MakeFramed {\advance\hsize-\width \FrameRestore}}%
    {\endMakeFramed}
    \newcounter{examplecounter}
\newenvironment{exam}{
 \begin{frshaded*}
    \refstepcounter{examplecounter}%
    \noindent
  \textbf{Example \arabic{examplecounter}}%
  \quad
}{%
\end{frshaded*}
}
\newenvironment{frshaded2*}{%
    \MakeFramed {\advance\hsize-\width \FrameRestore}}%
    {\endMakeFramed}
\newenvironment{result}{
 \begin{frshaded2*}
}{%
\end{frshaded2*}

}
\newenvironment{frshaded3*}{%
    \MakeFramed {\advance\hsize-\width \FrameRestore}}%
    {\endMakeFramed}
\definecolor{winered}{rgb}{0.5,0.2,0}
\DeclareMathOperator{\RCL}{RCL}
\DeclareMathOperator{\ap}{ap}
\DeclareMathOperator{\concat}{concat}
\DeclareMathOperator{\pcrop}{pcr}
\DeclareMathOperator{\parent}{par}
\DeclareMathOperator{\inv}{inv}
\DeclareMathOperator{\convert}{concat}
\DeclareMathOperator{\firstg}{first}
\DeclareMathOperator{\lex}{lex}
\DeclareMathOperator{\colex}{colex}
\DeclareMathOperator{\relex}{relex}
\DeclareMathOperator{\rotateprefix}{rotSet}
\DeclareMathOperator{\rotateSet}{rotprefix}
\newcommand{\bigtree}[1] {\mathcal{T}_{#1}}
\newcommand{\tree} {\mathcal{T}}
\newcommand{\pcr}[1] {\pcrop_{#1}}
\newcommand{\cycletree} {\mathbb{T}}
\newcommand{\subtree} {T}
\renewcommand{\tt} {\mathtt}
\newcommand{\fdown}{{\downarrow}f_1}
\newcommand{\fup}{{\uparrow}f_1}
\newcommand{\A}{\mathbf{A}}
\DeclareMathOperator{\firstone}{first1}
\DeclareMathOperator{\lastone}{last1}
\DeclareMathOperator{\lastzero}{last0}
\newcommand{\rot}[1]{[#1]}
\title{~\\Concatenation trees: A framework for efficient universal cycle and de Bruijn sequence constructions}
\titlerunning{Concatenation Trees}
\author{Joe Sawada}{University of Guelph, Canada}{}{}{}
\author{Jackson Sears}{Ontario Tech University, Canada}{}{}{}
\author{Andrew Trautrim}{University of Guelph, Canada}{}{}{}
\author{Aaron Williams}{Williams College, USA}{}{}{}
\authorrunning{J. Sawada, A. Trautrim, and A. Williams} 
\author{~}{~}{}{}{}
\authorrunning{~}
\begin{document}
\pagenumbering{gobble}
\maketitle

\begin{abstract}

Classic cycle-joining techniques have found widespread application in creating universal cycles for a diverse range of combinatorial objects, such as shorthand permutations, weak orders, orientable sequences, and various subsets of $k$-ary strings, including de Bruijn sequences. In the most favorable scenarios, these algorithms operate with a space complexity of $O(n)$ and require $O(n)$ time to generate each symbol in the sequences.  In contrast, concatenation-based methods have been developed for a limited selection of universal cycles. In each of these instances, the universal cycles can be generated far more efficiently, with an amortized time complexity of $O(1)$ per symbol, while still using $O(n)$ space.
This paper introduces \emph{concatenation trees}, which serve as the fundamental structures needed to bridge the gap between cycle-joining constructions 
and corresponding concatenation-based approaches. They immediately demystify the relationship between the classic Lyndon word (necklace) concatenation construction of de Bruijn sequences and a corresponding cycle-joining based construction. To underscore their significance, concatenation trees are applied to construct universal cycles for shorthand permutations, weak orders, and orientable sequences in $O(1)$-amortized time per symbol.  

\end{abstract}

\bigskip

\noindent

\newpage
\pagenumbering{arabic}
\section{Introduction} \label{sec:intro}

Readers are likely familiar with the concept of a \defo{de Bruijn sequence} (DB sequence), which is a circular string of length $k^n$ in which every $k$-ary string of length $n$ appears once as a substring.
For example, a binary DB sequence for $n=4$ is $0000100110101111$.
%
%
The study of these sequences dates back to Pingala's \emph{Chanda\d{h}\'{s}\=astra} \includegraphics[height=0.7em]{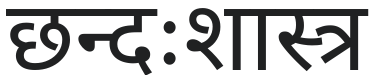} (`A Treatise on Prosody') over two thousand years ago (see \cite{kak2000yamatarajabhanasalagam,stein1961mathematician,stein2013mathematics,van1993binary}).
They have a wide variety of well-known modern-day applications~\cite{dbseq} and their theory is even being applied to de novo assembly of read sequences into a genome~\cite{nano2,compeau2011apply,nano3,nano1,genome}.
More broadly, when the underlying objects are not $k$-ary strings, the analogous concept is often called a \emph{universal cycle} \cite{chung1992universal}, and they have been studied for many  fundamental objects including permutations~\cite{shorthand2,johnson,shorthand,shorthandWong}, combinations~\cite{curtis,hurlbert,jackson}, set partitions~\cite{higgins}, and  graphs~\cite{brockman}.

In this paper, we develop a concatenation framework for the generation of DB sequences and universal cycles. We prove that each such sequence is equivalent to one generated by a corresponding successor rule that is based on an underlying cycle-joining tree. As we demonstrate, the concatenation constructions can often be implemented to generate the sequences in $O(1)$-amortized time per symbol, whereas the corresponding successor-rule generally requires $O(n)$ time. 
To illustrate our approach, it is helpful to consider a slightly more complex object.
A \defo{weak order} is a way competitors can rank in an event, where ties are allowed.
For example, in a horse race with five horses labeled $h_1, h_2, h_3, h_4, h_5$, the weak order (using a rank representation) 22451 indicates $h_5$ finished first, the horses $h_1$ and $h_2$ tied for second, horse $h_3$ finished fourth, and horse $h_4$ finished fifth.  
No horse finished third as a result of the tie for second.
Let $\mathbf{W}(n)$ denote the set of weak orders of order $n$.
For example, the thirteen weak orders for $n=3$ are given below:
\begin{equation*} \label{eq:weak3}
\mathbf{W}(3) = \{111,113,131,311,122,212,221,123,132,213,231,312,321\}.
\end{equation*}
\noindent
Note that $\mathbf{W}(n)$ is closed under rotation. 
For this reason, we can apply the pure cycling register (PCR), which corresponds to the function  $f(\tt{a}_1\tt{a}_2\cdots \tt{a}_n) = \tt{a_2}\cdots\tt{a}_n\tt{a_1}$, to induce small cycles.  Then, we repeatedly join the smaller cycles together to obtain a universal cycle.
In this approach, we partition $\mathbf{W}(n)$ into equivalence classes under rotation.
These classes are called \defo{necklaces} and we use the lexicographically smallest member of each class as its representative.
So $\{113, 131, 311\}$ is one class with representative $113$, and $\{111\}$ is another class.
Each class of size $t$ has a universal cycle of length $t$, namely the representative's aperiodic prefix (i.e., the shortest prefix of a string that can be concatenated some number of times to create the entire string).
So $113$ is a universal cycle for $\{113, 131, 311\}$, and $1$ is a universal cycle for $\{111\}$ (since $1$ is viewed cyclically).
Each necklace class can be viewed as a directed cycle induced by the PCR, where each edge corresponds to a rotation (i.e., the leftmost symbol is shifted out and then shifted back in as the new rightmost symbol), as seen in Figure~\ref{fig:weak3_necklaces} for $n=3$.  Two cycles can be joined together via a conjugate pair (formally defined in Section~\ref{sec:cycle-join}) to create a larger cycle as illustrated in Figure~\ref{fig:weak3_join}.   This is done by replacing a pair of rotation edges with a pair of edges that shift in a new symbol.  Repeating this process yields a universal cycle $1113213122123$ for $\mathbf{W}(3)$.

\begin{figure}[h]
    \begin{subfigure}{0.55\textwidth}
        \centering
        \includegraphics[scale=0.75]{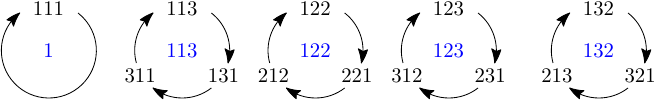} \ \ \ 
        \caption{Necklace cycles for $\mathbf{W}(3)$, where the representative of each class is at the top of its cycle, and the universal cycle is in the middle.}
        \label{fig:weak3_necklaces}
    \end{subfigure}
    \hfill
    \begin{subfigure}{0.43\textwidth}
        \centering
        \includegraphics[scale=0.75]{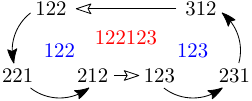} 
        \caption{Necklace cycles $\blue{122}$ and $\blue{123}$ are joined into a single cycle.
        The universal cycle for these strings is $\red{122123}$.}
        \label{fig:weak3_join}
    \end{subfigure}
        \caption{Initial steps to building a universal cycle for $\mathbf{W}_{3}$.}

    \label{fig:weak3}
\end{figure}
%
\begin{figure}[ht] 
    \centering
    \begin{subfigure}{0.49\textwidth}
        \centering
        \includegraphics[scale=0.79]{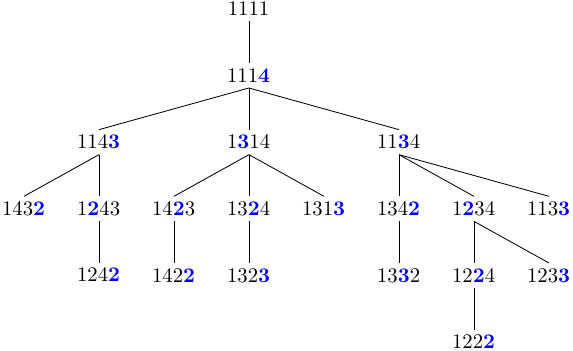}
        \caption{A cycle-joining tree for weak orders when $n=4$. 
        The precise parent rule appears in Section \ref{sec:weak}.}
        \label{fig:weakTrees4_cycleJoin}        
    \end{subfigure}
    \hfill
    \begin{subfigure}{0.49\textwidth}
        \centering
        \includegraphics[scale=0.79]{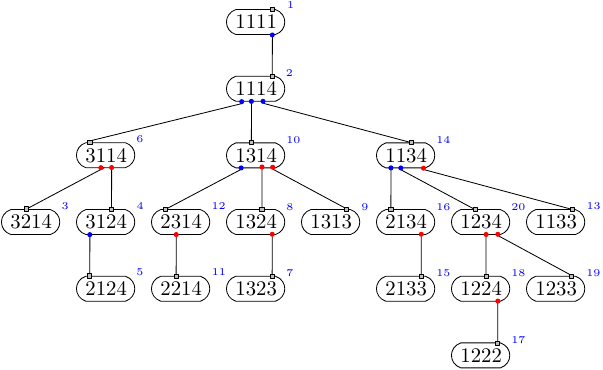}
        \caption{A concatenation tree $\bigtree{weak}$ for weak orders when $n=4$ illustrating the RCL order.   
        }
        \label{fig:weakTrees4_concatenation}
    \end{subfigure}
    \caption{Two tree structures for creating a universal cycle for $\mathbf{W}_{4}$.
  }
\label{fig:weak}
\end{figure}

In many cases, pairs of cycles can be joined together to form a cycle-joining tree.
For example, Figure~\ref{fig:weakTrees4_cycleJoin} illustrates a cycle-joining tree for $\mathbf{W}(4)$ based on an explicit parent rule stated in Section~\ref{sec:weak}.  Given a cycle-joining tree,
existing results in the literature~\cite{binframework,karyframework} allow us to generate a corresponding universal cycle \emph{one symbol at a time}. 
But what if we want to generate the universal cycle faster?
For instance, suppose that instead of generating one symbol at a time, we can generate necklaces one at a time.\footnote{In practice, a DB sequence (or universal cycle) does not need to be returned to an application one symbol at a time, but rather
a word can be shared between the generation algorithm and the application.
The algorithm repeatedly informs the application that the next batch of symbols in the sequence is ready.
This allows the generation algorithm to slightly modify the shared word and provide $O(n)$ symbols to the application as efficiently as  $O(1)$-amortized time~\cite{grandma2}.}  
How can we do this?
This  goal of generating one necklace at a time has been achieved in only a handful of  cases~\cite{grandma2,lex-comp,GS18,cool-lex,multi}.
Most notably, the DB sequence known as the Ford sequence, or the  \emph{Granddaddy} (see Knuth \cite{knuth}), can be created by concatenating the associated representatives in lexicographic order~\cite{fkm2}, matching the DB sequence given earlier:  $0~0001~0011~01~0111~1$.  
But these concatenation constructions have been the exception rather than the rule, and there has been no theoretical framework for understanding why they work.
Here, we provide the missing link.
For example, the unordered cycle joining tree in Figure \ref{fig:weakTrees4_cycleJoin} is redrawn in Figure \ref{fig:weakTrees4_concatenation}.
The new diagram is a bifurcated ordered tree (formally defined in Section~\ref{sec:bots}), meaning that children are ordered and partitioned into left and right classes, and importantly some representatives have changed.
If the tree is explored using an \emph{RCL traversal} (i.e., right children, then current, then left children), then --- presto! --- a concatenation construction of a universal cycle for $\mathbf{W}(4)$ is created:
\begin{equation*} \label{eq:weakUcycle4_concat}
1~1114~3214~3124~2124~3114~1323~1324~13~1314~2214~2314~1133~1134~2133~2134~1222~1224~1233~1234.
\end{equation*}

\begin{result}
\noindent
 {\bf Main result: } This paper introduces  \emph{concatenation trees} and \emph{RCL traversals}, which bridge the gap between $k$-ary PCR-based cycle-joining trees and concatenation constructions for corresponding universal cycles.  We apply the framework to construct universal cycles in $O(1)$-amortized time per symbol using polynomial space for (1) shorthand permutations, (2) weak orders, (3) orientable sequences, and (4) DB sequences.

\end{result}

\noindent
Our main result generalizes many interesting results for DB sequences and their relatives, with details provided in Section~\ref{sec:big4} and Section~\ref{sec:dbseq}.  
\begin{enumerate}
    \item It demystifies the relationship between the successor rule and the concatenation construction of the previously mentioned Granddaddy DB sequence~\cite{fred-succ,fkm2}, by providing a clear correspondence between the concatenation construction and the successor rule derived from an underlying cycle-joining tree.

    \item Similar to the Grandaddy, it demystifies the relationship between the known successor rule and concatenation construction of the Grandmama DB sequence~\cite{grandma2}.
    
    \item It provides the first proof of an observed correspondence between a successor rule construction~\cite{jansen,wong}  and a simple concatenation construction observed in~\cite{GS18} (that we later name the \emph{Granny} DB sequence).

    \item It generalizes known results for bounded weight universal cycles~\cite{walcom,min-weight,generalized-greedy} and universal cycles with forbidden $0^j$ substring ~\cite{GS18,generalized-greedy}; the latter has recent application in quantum key distribution schemes~\cite{cheeRLL}.
\end{enumerate}

\noindent
Additionally, we apply the framework to other combinatorial objects to highlight its general significance. 
\begin{enumerate}
\item Concatenation trees are applied to a $O(n)$ time per symbol
cycle-joining construction for shorthand permutations~\cite{karyframework} to generate the same universal cycle in $O(1)$-amortized time per symbol using $O(n^2)$ space.

\item Concatenation trees are applied to a $O(n)$ time per symbol
cycle-joining construction for weak orders~\cite{weakorder} to generate the same universal cycle in $O(1)$-amortized time per symbol using $O(n^2)$ space.

\item Concatenation trees are applied to a  $O(n)$ time per symbol
cycle-joining construction for orientable sequences~\cite{G&S-Orientable:2024} to generate the same universal cycle in $O(1)$-amortized time per symbol using $O(n^2)$ space.
\end{enumerate}

\noindent


While our focus is on PCR-based cycle-joining trees, preliminary evidence indicates that our framework can be generalized (though non-trivially) to other underlying feedback functions in the binary case including:
\begin{itemize}
    \item the Complementing Cycling Register (CCR) with feedback function $f(\tt{a}_1\tt{a}_2\cdots \tt{a}_n) = 1 \oplus \tt{a}_1 =  \overline{\tt{a}}_1$,
     \item the Pure Summing Register (PSR) with feedback function $f(\tt{a}_1\tt{a}_2\cdots \tt{a}_n) = \tt{a}_1 \oplus \tt{a}_2 \cdots \oplus \tt{a}_n$,
          \item the Complementing Summing Register (CSR) with feedback function $f(\tt{a}_1\tt{a}_2\cdots \tt{a}_n) = 1 \oplus \tt{a}_1 \oplus \tt{a}_2 \cdots \oplus \tt{a}_n$, and
    \item the Pure Run-length Register (PRR) with feedback function $f(\tt{a}_1\tt{a}_2\cdots \tt{a}_n) = \tt{a}_1 \oplus \tt{a}_2 \oplus \tt{a}_n$,
\end{itemize}

\noindent
where $\oplus$ is addition modulo 2, and $\overline{\tt{x}}$ is the complement of $\tt{x}$.
This has the potential to unify a large body of independent results, enabling new and interesting results.  
In particular, the recently introduced pure run-length register (PRR)~\cite{pref-same} is conjectured to be the underlying feedback function used in a lexicographic composition construction~\cite{lex-comp}. 
Furthermore, the PRR is proved to be the underlying function used in the greedy prefer-same~\cite{eldert} and prefer-opposite~\cite{pref-opposite}  constructions; however, no concatenation construction is known.
The first successor rule based on the complementing cycling register (CCR) is noted to have a very good local 0-1 balance~\cite{huang}; however, no corresponding concatenation construction is known. 
There are two known CCR-based concatenation constructions~\cite{Gabric2017,GS18}, but there is no clear correlation to an underlying cycle-joining approach, even though one appears to be equivalent to a successor rule from~\cite{binframework}.
The cool-lex concatenation constructions~\cite{cool-lex} have equivalent underlying successor rules based on the pure summing register (PSR) and the  complementing summing register (CSR).  This correspondence was not observed until considering larger alphabets~\cite{multi}, though little insight to the correspondence is provided in the proof.  Cycle-joining constructions based on the PSR/CSR are also considered in~\cite{Etzion1987,Etzion1984}.

\smallskip

\noindent
{\bf Outline.}
In Section~\ref{sec:back}, we present  the necessary background definitions and notation along with a detailed discussion of cycle-joining trees and their corresponding successor rules.  In Section~\ref{sec:bots}, we introduce bifurcated ordered trees, which are the structure underlying concatenation trees. In Section~\ref{sec:concat}, we introduce concatenation trees along with a statement of our main result.  In Section~\ref{sec:app}, we apply our framework to a wide variety of interesting combinatorial objects, including DB sequences.  
Implementation of the universal cycle algorithms presented in this paper are available at \url{http://debruijnsequence.org}.

\section{Preliminaries}  \label{sec:back}

Let $\Sigma = \{0,1,2, \ldots , k-1\}$ denote an alphabet with $k$ symbols.  
Let $\Sigma^n$ denote the set of all length-$n$ strings over $\Sigma$.  
Let $\alpha = \tt{a}_1\tt{a}_2\cdots \tt{a}_n$ denote a string in $\Sigma^n$.  
The notation $\alpha^t$ denotes $t$ copies of $\alpha$ concatenated together.
The \defo{aperiodic prefix} of $\alpha$ 
is the shortest string $\beta$ such that $\alpha = \beta^t$ for some $t\geq 1$; the \defo{period} of $\alpha$ is $|\beta|$.  Let $\ap(\alpha_1,\alpha_2, \ldots , \alpha_n)$ denote the concatenation of the aperiodic prefixes of $\alpha_1, \alpha_2, \ldots, \alpha_t$.  For example $\ap(0000, 0111, 1010) = 0011110$, and $\ap(010101) = 01$. Note that $010101$ has period equal to 2.
If the period of $\alpha$ is $n$, then $\alpha$ is said to be \defo{aperiodic} (or primitive); otherwise, it is said to be \defo{periodic} (or a proper power).  When $k=2$, let $\overline{\tt{a}}_i$ denote the complement of a bit $\tt{a}_i$.  


A \defo{necklace class} is an equivalence class of strings under rotation.  
A \defo{necklace} is the lexicographically smallest representative of a necklace class.  A \defo{Lyndon word} is an aperiodic necklace. Let $\mathbf{N}_k(n)$ denote the set of all $k$-ary necklaces of order $n$.  As an example, 
the six binary necklaces for $n=4$ are: $\mathbf{N}_2(4) = \{ 0000, 0001, 0011, 0101, 0111, 1111\}$.  
Let $\rot{\alpha}$ denote the set of all strings in $\alpha$'s necklace class, i.e., the set of all rotations of $\alpha$.  For example,
$\rot{0001} = \rot{1000} = \{0001, 0010, 0100, 1000\}$ and $\rot{0101} = \{0101, 1010\}$.
The \defo{pure cycling register} (PCR) is a shift register with feedback function $f(\tt{a}_1\tt{a}_2\cdots \tt{a}_n) = \tt{a}_1$. Starting with $\alpha$, it induces a cycle containing the strings in $\alpha$'s necklace class.  For example, 
$$0001 \rightarrow 0010 \rightarrow 0100 \rightarrow 1000 \rightarrow  \mathit{0001}$$ 
is a cycle induced by the PCR that can be represented by any string in the cycle.
Given a tree $\subtree$ with nodes (cycles induced by the PCR) labeled by necklace representatives $\{\alpha_1, \alpha_2, \ldots, \alpha_t\}$, let $\mathbf{S}_{\subtree} = \rot{\alpha_1} \cup \rot{\alpha_2} \cup \cdots \cup \rot{\alpha_t}$.
For example, if $n=4$ and  $T$ contains two nodes $\{0001, 0101\}$ then 
$\mathbf{S}_{T} = \{0001, 0010, 0100, 1000\} \ \cup \ \{0101, 1010\}$.

%

%
%
%

Given $\mathbf{S} \subseteq \Sigma^n$, a \defo{universal cycle} $U$ for $\mathbf{S}$ is a cyclic sequence of length $|\mathbf{S}|$ that contains each string in $\mathbf{S}$ as a substring (exactly once).   
Given a universal cycle $U$ for a set $\mathbf{S} \subseteq \Sigma^n$, a \defo{successor rule} for $U$ is a function $f:\mathbf{S} \rightarrow \Sigma$ such that $f(\alpha)$ is the symbol following $\alpha$ in $U$.  


\subsection{Cycle joining trees}  \label{sec:cycle-join}

In this section we review how two universal cycles can be joined  to obtain a larger universal cycle. 
Let $\tt{x},\tt{y}$ be distinct symbols in $\Sigma$.
If $\alpha = \tt{x}\tt{a}_2\cdots \tt{a}_n$ and $\hat \alpha = \tt{y}\tt{a}_2\cdots \tt{a}_n$, then $\alpha$ and $\hat \alpha$ are said to be \defo{conjugates} of each other, and $(\alpha, \hat \alpha)$ is called a \defo{conjugate pair}.
The following well-known result (see for instance Lemma 3 in~\cite{dbrange}) based on conjugate pairs is the crux of the cycle-joining approach.\footnote{The cycle-joining approach has graph theoretic underpinnings related to Hierholzer's algorithm for constructing Euler cycles~\cite{hierholzer}.}  

\begin{theorem} \label{thm:concat}
Let $\mathbf{S}_1$ and $\mathbf{S}_2$ be disjoint subsets of $\Sigma^n$
such that $\alpha = \tt{x}\tt{a}_2\cdots \tt{a}_n \in \mathbf{S}_1$ and $\hat \alpha = \tt{y}\tt{a}_2\cdots \tt{a}_n \in \mathbf{S}_2$; $(\alpha, \hat \alpha)$ is a conjugate pair. 
If $U_1$  is a universal cycle for $\mathbf{S}_1$ with suffix $\alpha$ and $U_2$ is a universal cycle for $\mathbf{S}_2$  with suffix $\hat \alpha$ then $U = U_1U_2$ is a universal cycle for $\mathbf{S}_1 \cup \mathbf{S}_2$.
\end{theorem}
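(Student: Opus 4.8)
The plan is to think of $U_1$ and $U_2$ not just as cyclic sequences but as closed walks in the appropriate transition (de Bruijn–style) digraph on vertex set $\Sigma^{n-1}$, where a string $\tt{b}_1\tt{b}_2\cdots\tt{b}_n$ is an edge from $\tt{b}_1\cdots\tt{b}_{n-1}$ to $\tt{b}_2\cdots\tt{b}_n$. First I would observe that a universal cycle for a set $\mathbf{S}\subseteq\Sigma^n$ corresponds exactly to a closed walk that traverses each edge in $\mathbf{S}$ precisely once; writing $U_1$ with suffix $\alpha=\tt{x}\tt{a}_2\cdots\tt{a}_n$ means the last edge of the closed walk $U_1$ is the edge $\alpha$, which ends at the vertex $v:=\tt{a}_2\cdots\tt{a}_n$, and the walk also begins at $v$. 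Similarly $U_2$ written with suffix $\hat\alpha=\tt{y}\tt{a}_2\cdots\tt{a}_n$ is a closed walk beginning and ending at the same vertex $v$, since $\hat\alpha$ also ends at $\tt{a}_2\cdots\tt{a}_n$. This is the key point that makes the conjugate condition do its work: conjugacy of $\alpha$ and $\hat\alpha$ is precisely what forces the two closed walks to share this common vertex $v$ as their basepoint.

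Next I would argue that concatenating the symbol strings $U_1$ and $U_2$ corresponds to concatenating the two closed walks at the shared vertex $v$: traverse $U_1$ from $v$ back to $v$, then immediately traverse $U_2$ from $v$ back to $v$. Here I need to be careful about the cyclic/string bookkeeping — that the substrings of length $n$ appearing in the circular string $U_1U_2$ are exactly (the edges of $U_1$) $\cup$ (the edges of $U_2$), with the "seam" windows straddling the junction being accounted for correctly. Because $U_1$ ends with $\alpha$ and the next $n-1$ symbols after the seam spell out the start of $U_2$, which (as a closed walk at $v$) begins at vertex $v=\tt{a}_2\cdots\tt{a}_n$, the windows crossing the first seam are exactly the edges of $U_2$ incident to $v$ at its start; symmetrically for the wrap-around seam back into $U_1$. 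So no length-$n$ window is lost or duplicated, and the multiset of length-$n$ substrings of $U_1U_2$ is the disjoint union of those of $U_1$ and those of $U_2$, i.e. exactly $\mathbf{S}_1\cup\mathbf{S}_2$ (disjointness of $\mathbf{S}_1,\mathbf{S}_2$ gives the "exactly once").

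Then I would finish by a length count: $|U_1U_2| = |U_1| + |U_2| = |\mathbf{S}_1| + |\mathbf{S}_2| = |\mathbf{S}_1\cup\mathbf{S}_2|$, again using disjointness, so $U_1U_2$ is a cyclic sequence of the correct length containing each element of $\mathbf{S}_1\cup\mathbf{S}_2$ as a substring exactly once — a universal cycle for $\mathbf{S}_1\cup\mathbf{S}_2$. As a by-product, I would note that $U_1U_2$ can be written with suffix $\hat\alpha$ (since it ends where $U_2$ ends), which is the form needed to iterate the joining along a cycle-joining tree.

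The main obstacle I anticipate is not the graph-theoretic idea, which is standard, but making the "seam" argument fully rigorous at the level of strings: carefully verifying that each of the $n-1$ length-$n$ windows crossing each junction of the concatenation is counted once and corresponds to an edge of the intended cycle, using only the hypothesis that $U_1$ has suffix $\alpha$ and $U_2$ has suffix $\hat\alpha$ together with $\alpha,\hat\alpha$ being conjugates. An alternative, more self-contained route that avoids graph language entirely is a direct induction/bijection on the positions of $U_1U_2$: partition the $|\mathbf{S}_1|+|\mathbf{S}_2|$ cyclic windows of length $n$ into the $|\mathbf{S}_1|$ windows "anchored in $U_1$" and the $|\mathbf{S}_2|$ windows "anchored in $U_2$", show the first family equals $\mathbf{S}_1$ (this uses that $U_1$ is already a universal cycle for $\mathbf{S}_1$, after accounting for the suffix $\alpha$ wrapping into the prefix of $U_2$ in the same way it previously wrapped into the prefix of $U_1$), and symmetrically for the second; I would probably present whichever of these is shorter in the actual writeup.
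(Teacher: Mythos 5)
The paper does not actually prove Theorem~\ref{thm:concat}; it cites it as a well-known fact (Lemma~3 of the reference given there), so there is no internal proof to compare against. Your argument is the standard one for this classical lemma and is sound: conjugacy forces the two closed walks (equivalently, the cyclic strings) to share the length-$(n-1)$ state $\mathtt{a}_2\cdots\mathtt{a}_n$ at the splice point, so every length-$n$ window of the circular string $U_1U_2$ is a window of cyclic $U_1$ or of cyclic $U_2$, and the length count plus disjointness of $\mathbf{S}_1,\mathbf{S}_2$ finishes it; recording that $U_1U_2$ ends with $\hat\alpha$ is exactly what lets the join be iterated along a cycle-joining tree, as the paper does in the proof of Theorem~\ref{thm:main}.

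One caveat you should handle explicitly in the write-up: in the paper's setting the cycles are PCR cycles of necklace classes, so a universal cycle $U_i$ can have length less than $n$ (e.g.\ $U=0$ for $\{0^n\}$, or length $p<n$ for a periodic necklace). Then ``$U_1$ has suffix $\alpha$'' must be read cyclically, and a seam window may wrap through all of $U_1$ and into the tail of $U_2$ (and vice versa), so your claim that the $n-1$ symbols preceding each seam are $\mathtt{a}_2\cdots\mathtt{a}_n$ needs the extra observation that \emph{both} $U_1$ and $U_2$ end (cyclically) with that same string, which makes the mixed suffix $(\text{tail of }U_2)(\text{all of }U_1)$ still equal $\mathtt{a}_2\cdots\mathtt{a}_n$. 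This is a routine strengthening of your seam bookkeeping, not a change of approach, but it is genuinely needed for the cases (such as the root cycles $0^n$ or $1^n$) to which the paper applies the theorem.
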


Let $U_i$ denote a universal cycle for $\mathbf{S}_i \subseteq \Sigma^n$. Two universal cycles $U_1$ and $U_2$ are said to be \defo{disjoint} if $\mathbf{S}_1 \cap \mathbf{S}_2 = \emptyset$.
A \defo{cycle-joining tree} $\cycletree$  is an unordered tree where 
the nodes correspond to a disjoint set of universal cycles $U_1, U_2, \ldots, U_t$; an edge between $U_i$ and $U_j$ is defined by a conjugate pair $(\alpha, \hat \alpha)$ such that $\alpha \in \mathbf{S}_i$ and $\hat \alpha \in \mathbf{S}_j$. 
For our purposes, we consider cycle-joining trees to be rooted.  
If the cycles are induced by the PCR, i.e., the cycles correspond to necklace classes, then $\cycletree$ is said to be a \defo{PCR-based cycle-joining tree}.  
As examples,  four PCR-based cycle-joining trees are illustrated in Figure~\ref{fig:big4}; their nodes are labeled by the necklaces $\mathbf{N}_2(6)$.  They are defined by the following  \defo{parent-rules}, which determines the parent of a given non-root node.   

\begin{result}
\noindent
{\bf Four ``simple'' parent rules defining binary PCR-based cycle-joining trees}

\begin{itemize}
\item $\cycletree_1$: rooted at $1^n$ and the parent of every other node $\alpha \in \mathbf{N}_2(n)$ is obtained by flipping the \blue{last 0}.
\item $\cycletree_2$: rooted at $0^n$ and the parent of every other node $\alpha \in \mathbf{N}_2(n)$ is obtained by flipping the \blue{first 1}.
\item $\cycletree_3$: rooted at $0^n$ and the parent of every other node $\alpha \in \mathbf{N}_2(n)$ is obtained by flipping the \blue{last 1}.
\item $\cycletree_4$:  rooted at $1^n$ and the parent of every other node $\alpha \in \mathbf{N}_2(n)$ is obtained by flipping the \blue{first 0}.
\end{itemize}

\vspace{-0.15in}

\end{result}

\noindent
Note that for $\cycletree_3$  and $\cycletree_4$, the parent of a node $\alpha$ is obtained by first flipping the named bit and then rotating the string to its lexicographically least rotation to obtain a necklace.
%
Each node $\alpha$ and its parent $\beta$ are joined by a conjugate pair where the highlighted bit in $\alpha$ is the first bit in one of the conjugates.  For example, the nodes $\alpha = 0\rred{1}1011$ and $\beta = 001011$ in $\cycletree_2$ from Figure~\ref{fig:big4} are joined by the conjugate pair $(1\underline{10110}, 0\underline{10110})$.

%
\begin{figure}[ht]
\centering
\resizebox{6.2in}{!}{\includegraphics{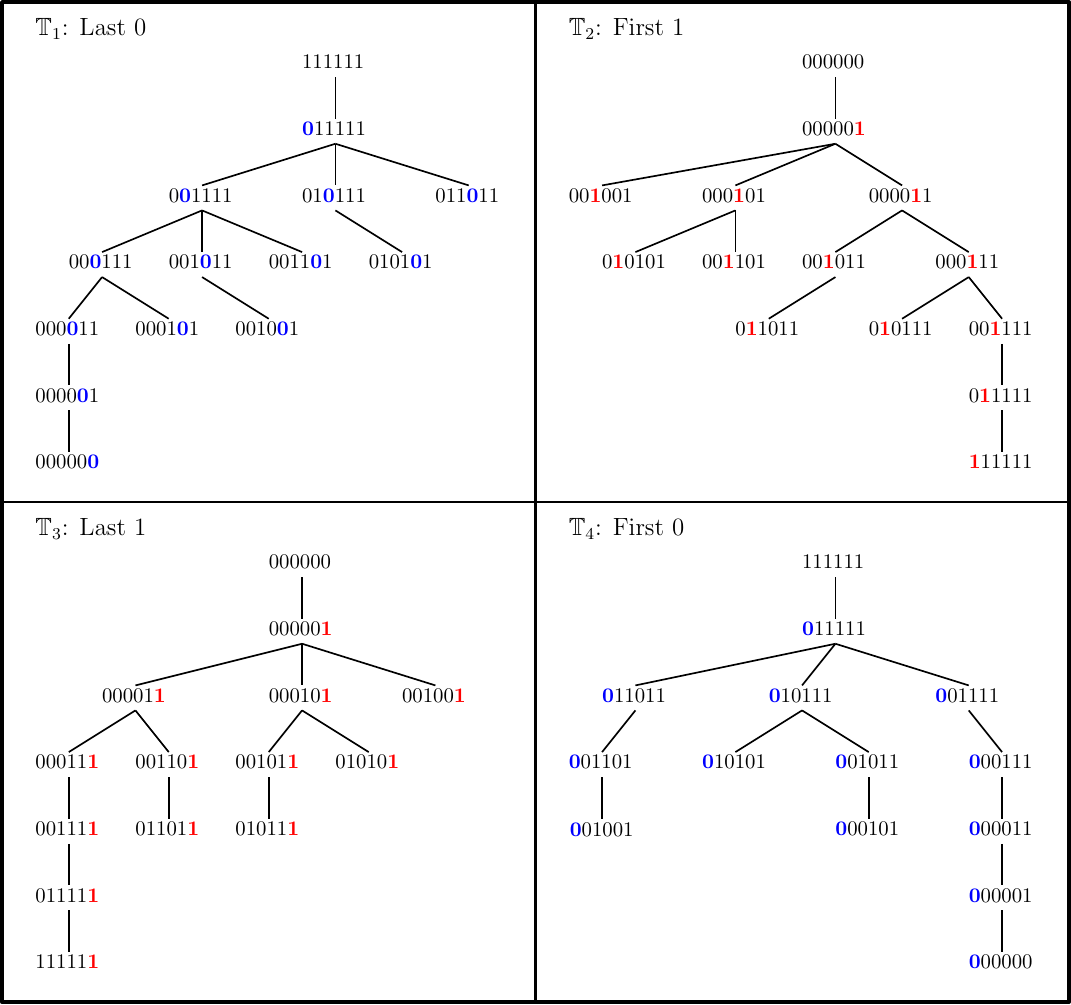}}
\caption{
Cycle-joining trees for $n=6$ and $k=2$ derived from the four simple parent rules.  
The node $001101$ is joined to a different parent cycle in each tree.
In particular, the edge $0011\blue{0}1 \text{--} 001111$ in $\cycletree_1$ is obtained by flipping its \blue{last~$0$}.}  
\label{fig:big4}
\end{figure}

When two adjacent nodes $U_i$ and $U_j$ in a cycle-joining tree $\cycletree$ are joined  to obtain $U$ via Theorem~\ref{thm:concat} (rotating the cycles as appropriate), the nodes are unified and replaced with $U$ (the edge between $U_i$ and $U_j$ is contracted).  Repeating this process until only one node remains produces a universal cycle for $\mathbf{S}_1 \cup \mathbf{S}_2 \cup \cdots \cup \mathbf{S}_t$.  In the binary case, the same universal cycle is produced, no matter the order in which the cycles are joined.  This is because no string can belong to more than one conjugate pair in the underlying definition of $\cycletree$.  However, when $k>2$, the order that the cycles are joined can be important.

\begin{exam} \label{exam:chain} \small
The following illustrates two different ways to join the cycles in a PCR-based cycle-joining tree $\cycletree$ for $n=3$ and $k=3$ with three nodes represented by $001, 002,$ and  $003$ joined via conjugate pairs $(100, 200)$, $(200, 300)$.  Note the string $200$ belongs to both conjugate pairs. 
\begin{center}
\resizebox{4.5in}{!}{\includegraphics{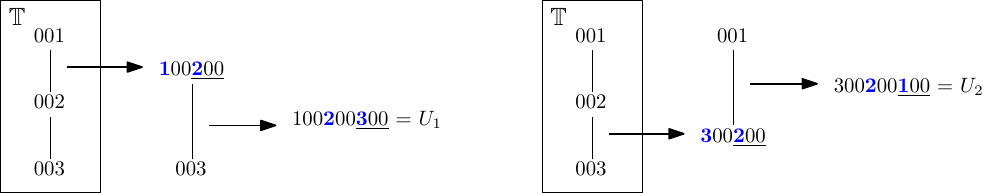}}
\end{center}
\noindent
The resulting universal cycle for $\mathbf{S}_\cycletree = \rot{001} \cup \rot{002} \cup \rot{003}$ is different in each case.

\end{exam}

In upcoming discussion regarding both successor rules and concatenation trees, we require the  underlying cycle-joining trees to have the following property when $k>2$.


\begin{result}
\noindent
{\bf Chain Property}: If a node in a cycle-joining tree $\cycletree$ has two children joined via conjugate pairs $(\tt{x}\tt{a}_2\cdots \tt{a}_n, \tt{y}\tt{a}_2\cdots \tt{a}_n)$ and 
$(\tt{x}'\tt{b}_2\cdots \tt{b}_n, \tt{y'}\tt{b}_2\cdots \tt{b}_n)$, then $\tt{a}_2\cdots \tt{a}_n \neq \tt{b}_2\cdots \tt{b}_n$. 
\end{result}

\begin{wrapfigure}[3]{r}{0.20\textwidth}  
\vspace{-0.2in}
        \centering
       \resizebox{0.75in}{!}{\includegraphics{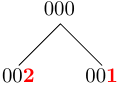}  }     
\end{wrapfigure}
\noindent 
Observe that the Chain Property is satisfied in Example~\ref{exam:chain}, and is always satisfied when $k=2$.  The cycle-joining tree on the right with conjugate pairs ($0\underline{00},1\underline{00}$) and ($0\underline{00},2\underline{00}$) illustrates a rooted tree that does not satisfy the Chain Property.


\subsection{Successor-rule constructions} \label{sec:big4}

Let $\cycletree$ be a PCR-based cycle-joining tree  where the nodes are joined by a set $\mathbf{C}$ of conjugate pairs.  We say $\gamma$ \defo{belongs to} a conjugate pair $(\alpha, \hat \alpha)$ if either $\gamma = \alpha$ or $\gamma = \hat \alpha$.  If $k=2$, the following function $f_0$ is a successor rule for the corresponding universal cycle for $\mathbf{S}_{\cycletree}$~\cite{binframework}, where $\alpha = \tt{a}_1\tt{a}_2\cdots \tt{a}_n$:
\begin{center}
$f_0(\alpha) = \left\{ \begin{array}{ll}
         \overline{\tt{a}}_1 &\ \  \mbox{if $\alpha$ belongs to some conjugate pair in $\mathbf{C}$;}\\
         \tt{a}_1 \  &\ \  \mbox{otherwise.}\end{array} \right. $
\end{center}


Applying the successor rule $f_0$ directly requires an exponential amount of memory to store the conjugate pairs. However, a cycle-joining tree defined by a straightforward parent rule may allow for a much more efficient implementation, using as little as $O(n)$ space and $O(n)$ time.  Recall the four parent rules stated for the trees $\cycletree_1$, $\cycletree_2$, $\cycletree_3$, $\cycletree_4$.
The upcoming four successor rules $\pcr{1}, \pcr{2}, \pcr{3}, \pcr{4}$, which correspond to $f_0$, are stated generally for any subtree $\subtree$ of the corresponding cycle-joining tree; they will be revisited in Section~\ref{sec:dbseq}.
Previously, these successor rules were stated for the entire trees in~\cite{binframework}, and then for subtrees that included all nodes up to a given level~\cite{karyframework}
putting a restriction on the minimum or maximum weight (number of 1s) of any length-$n$ substring. 

\begin{result}  
\noindent   
\underline{$\cycletree_1$ (Last 0)} 
\small
~
\noindent
Let $j$ be the smallest index of $\alpha = \tt{a}_1\tt{a}_2\cdots \tt{a}_n$ such that $\tt{a}_j = 0$ and $j > 1$, or $j=n{+}1$ if no such index exists.
Let $\gamma = \tt{a}_j\tt{a}_{j+1}\cdots \tt{a}_n 0 \tt{a}_2\cdots \tt{a}_{j-1} = \tt{a}_j\tt{a}_{j+1}\cdots \tt{a}_n 0 1^{j-2}$.

\begin{center}
$\pcr{1}(\alpha) = \left\{ \begin{array}{ll}
         \overline{\tt{a}}_1 &\ \  \mbox{if $\gamma$ is a necklace \blue{and $\tt{a}_2\cdots \tt{a}_n\overline{\tt{a}}_1 \in~ \mathbf{S}_{\subtree}$};}\\
         {\tt{a}_1} \  &\ \  \mbox{otherwise.}\end{array} \right. $
\end{center}

\vspace{-0.2in}
%
\end{result}

\vspace{-0.2in}
\begin{result}  
\noindent  \underline{$\cycletree_2$ (First 1)} 
\small
~Let $j$ be the largest index of $\alpha = \tt{a}_1\tt{a}_2\cdots \tt{a}_n$ such that $\tt{a}_j = 1$, or $j=0$ if no such index exists.
Let $\gamma = \tt{a}_{j+1}\tt{a}_{j+2}\cdots \tt{a}_n 1 \tt{a}_2\cdots \tt{a}_j = 0^{n-j}1\tt{a}_2\cdots \tt{a}_j$.

\begin{center}
$\pcr{2}(\alpha) = \left\{ \begin{array}{ll}
         \overline{\tt{a}}_1 &\ \  \mbox{if $\gamma$ is a necklace
         \blue{and $\tt{a}_2\cdots \tt{a}_n\overline{\tt{a}}_1 \in~ \mathbf{S}_{\subtree}$};}\\
         {\tt{a}_1} \  &\ \  \mbox{otherwise.}\end{array} \right. $
\end{center}

\vspace{-0.2in}
\end{result}

\vspace{-0.2in}
\begin{result}  
\noindent  \underline{$\cycletree_3$ (Last 1) }
 \small
~Let $\alpha = \tt{a}_1\tt{a}_2\cdots \tt{a}_n$ and let $\gamma = \tt{a}_2 \tt{a}_3 \cdots \tt{a}_{n}1$.

\begin{center}
$\pcr{3}(\alpha) = \left\{ \begin{array}{ll}
         \overline{\tt{a}}_1 &\ \  \mbox{if $\gamma$ is a necklace
         \blue{and $\tt{a}_2\cdots \tt{a}_n\overline{\tt{a}}_1 \in~ \mathbf{S}_{\subtree}$};}\\
         {\tt{a}_1} \  &\ \  \mbox{otherwise.}\end{array} \right. $
\end{center}

\vspace{-0.2in}
\end{result}

\vspace{-0.2in}
\begin{result}  
\noindent \underline{$\cycletree_4$ (First 0)} 
 \small
~
Let $\alpha = \tt{a}_1\tt{a}_2\cdots \tt{a}_n$ and let $\gamma = 0\tt{a}_2 \tt{a}_3 \cdots \tt{a}_{n}$.

\begin{center}
$\pcr{4}(\alpha) = \left\{ \begin{array}{ll}
         \overline{\tt{a}}_1 &\ \  \mbox{if $\gamma$ is a necklace
         \blue{and $\tt{a}_2\cdots \tt{a}_n\overline{\tt{a}}_1 \in~ \mathbf{S}_{\subtree}$};}\\
         {\tt{a}_1} \  &\ \  \mbox{otherwise.}\end{array} \right. $
\end{center}

\vspace{-0.2in}
\end{result}

The DB sequences obtained by applying the four successor rules for $n=6$ and $k=2$
to $T = \cycletree_1,\cycletree_2,\cycletree_3,\cycletree_4$, respectively, are provided in Table~\ref{tab:db6}.  The spacing between some symbols are used 
to illustrate the correspondence to upcoming concatenation constructions.
%
\begin{table}[h]
\begin{center}
\begin{tabular}{c |l} 
 {\bf Successor rule} & {\bf DB sequence for $n=6$ and $k=2$} \\ \hline
$\pcr{1}$  & $0~000001~000011~000101~000111~001~001011~001101~001111~01~010111~011~011111~1$ \\
$\pcr{2}$  & 
$0~000001~001~000101~01~001101~000011~001011~011~000111~010111~001111~011111~1$
\\
$\pcr{3}$  & 
$1~111110~111100~111000~110~110100~110000~101110~101100~10~101000~100~100000~0$ \\
$\pcr{4}$  &
$1~111110~110~100~100110~111010~10~110010~100010~111100~111000~110000~100000~0$ \\
\end{tabular}

\caption{DB sequences resulting from the successor rules corresponding to the cycle-joining trees $\cycletree_1$, $\cycletree_2$, $\cycletree_3$, $\cycletree_4$ from Figure~\ref{fig:big4}.}
\label{tab:db6}
\end{center}
\end{table}
%
%
The DB sequence generated by $\pcr{1}$ is the well-known Ford sequence~\cite{fred-succ}, and is called the \emph{Granddaddy} by Knuth~\cite{knuth}.  It is the lexicographically smallest DB sequence, and it can also be generated by a prefer-0 greedy approach attributed to Martin~\cite{martin}. Furthermore, Fredricksen and Maiorana~\cite{fkm2} demonstrate an equivalent necklace (or Lyndon word) concatenation construction  that can generate the sequence in $O(1)$-amortized time per bit.   
The DB sequence generated by $\pcr{2}$ 
is called the \emph{Grandmama} by Dragon et al.~\cite{grandma2}; it 
can also be generated in $O(1)$-amortized time per bit by concatenating necklaces in co-lexicographic order. %
The DB sequence generated by $\pcr{3}$, was first discovered by Jansen~\cite{jansen} for $k=2$, then generalized in~\cite{wong}. It is conjectured to have a concatenation construction by Gabric and Sawada~\cite{GS18}, a fact we prove in Section~\ref{sec:dbseq}.
The DB sequence generated by $\pcr{4}$, was first discovered by Gabric, Sawada, Williams, and Wong~\cite{binframework}.  No concatenation construction for this sequence was previously known which served as the initial motivation for this work. 

\subsubsection{Non-binary alphabets} \label{sec:succ-kary}


Consider a non-binary alphabet where $k>2$.
Recall from Example~\ref{exam:chain}, that the order the cycles are joined in a cycle-joining tree $\cycletree$ may be important. This means defining a natural and generic successor rule  is more challenging, especially if $\cycletree$ does not satisfy the Chain Property, i.e., $\cycletree$ has a node with two children joined via conjugate pairs of the form $(\tt{x}\beta, \tt{y}\beta)$ and $(\tt{x}\beta, \tt{z}\beta)$, for some $k$-ary string $\beta$. Thus, going forward, assume that $\cycletree$ satisfies the Chain Property.


Let $\alpha_1, \alpha_2, \ldots, \alpha_m$ denote a maximal length path of nodes in $\cycletree$ such that for each $1 \leq i < m$, the node $\alpha_i$ is the parent of $\alpha_{i+1}$
and they are joined via a conjugate pair of the form  $(\tt{x}_i\beta, \tt{x}_{i+1}\beta$); $\beta$ is the same in each conjugate pair.  
We call such a path a \defo{chain} of length $m$, and define  \blue{$\firstg(\tt{x}_i\beta) = \tt{x}_1$}.
%
For each such chain in $\cycletree$, assign a permutation $d_1d_2\cdots d_m$  of  $\{1,2,\ldots, m\}$ in which no element appears in its original position (a derangement).  

Let $\alpha = \tt{a}_1\tt{a}_2\cdots \tt{a}_n$.  If $\alpha = \tt{x}_i\beta$ belongs to a conjugate pair that joins two nodes in a chain $\alpha_1, \alpha_2, \ldots, \alpha_m$ with corresponding derangement $d_1d_2\cdots d_m$, 
let $g(\alpha) = \tt{x}_{d_i}$.  
Then the following function $f_1$ is a successor rule for a corresponding universal cycle for $\mathbf{S}_{\cycletree}$ (based on the theory in~\cite{karyframework}):

\begin{center}
$f_{1}(\alpha) = \left\{ \begin{array}{ll}

 g(\alpha)  &\ \  \mbox{if $\alpha$ belongs to a conjugate pair  in $\mathbf{C}$;}\\
         {\tt{a}_1} \  &\ \  \mbox{otherwise.}\end{array} \right. $
         
\end{center}
When $k=2$, $f_1 = f_0$.

\begin{exam} \label{exam:chain2} \small
%
%
Continuing Example~\ref{exam:chain}, let $\alpha = 300$; it belongs to a conjugate pair. Note that $\alpha_1 = 001$, $\alpha_2 = 002$, and $\alpha_3 = 003$ form a chain of length $m=3$. If the derangement assigned to this chain is 231, then $f_1$ is the successor rule for the universal cycle 100200300.  If the derangement assigned to this chain is 312, then $f_1$ is the successor rule for the universal cycle 300200100.

%
%

\end{exam}

Perhaps the most natural derangements for the chains in $\cycletree$ are of the form $23\cdots m1$ and $m12\cdots (m{-}1)$. 
Specifically, let: 
\begin{itemize}
\item $\fup(\alpha)$ denote the function $f_1(\alpha)$ when all chain derangements have the form $23\cdots m1$, and 
\item $\fdown(\alpha)$ denote the function $f_1(\alpha)$ when all chain derangements have the form $m12\cdots (m{-}1)$.
\end{itemize}
These are precisely the successor rules that correspond to our upcoming concatenation tree results.
They are also the ones used in the generic successor rules stated in Theorem 2.8 and Theorem 2.9 from~\cite{karyframework}; they lead to the definition of natural successor rules for eight different DB sequences including the $k$-ary Granddaddy (lex smallest)~\cite{fkm2} and the $k$-ary Grandmama~\cite{grandma2}.

\subsection{Insights into concatenation trees} \label{sec:insight}


%
The sequence in Table~\ref{tab:db6} generated by $\pcr{1}$ starting with $0^n$ has an interesting property:  It corresponds to concatenating the \emph{aperiodic prefixes} of each node in the corresponding cycle-joining tree $\cycletree_1$ (illustrated in Figure~\ref{fig:big4}) as they are visited in post-order, where the children of a node are listed in lexicographic order. Notice also, that a post-order traversal visits the necklaces (nodes) as they appear in lexicographic order; this corresponds to the well-known Granddaddy necklace concatenation construction for DB sequences~\cite{fkm2}. 
Similarly, the sequence generated by the successor rule $\pcr{2}$  starting with $0^n$  corresponds to concatenating the aperiodic prefixes of each node in the corresponding cycle-joining tree $\cycletree_2$ as they are visited in pre-order, where the children of a node are listed in colex order.  This traversal visits the necklaces (nodes) as they appear in colex order, which is known as the Grandmama concatenation construction for DB sequences~\cite{grandma2}. 
Unfortunately, this \emph{magic} does not carry over to the trees $\cycletree_3$ and $\cycletree_4$, no matter how we order the children; the existing proofs for $\cycletree_1$ and $\cycletree_2$ offer no higher-level insights or pathways towards generalization. 

Our discovery to finding a concatenation construction for a given successor rule is to tweak the corresponding cycle-joining tree by:
(i)  determining the appropriate representative of each cycle,
(ii) determining an ordering of the children, and 
(iii) determining how the tree is traversed.
%
The resulting concatenation trees for $\cycletree_1, \cycletree_2, \cycletree_3$, and $\cycletree_4$, which are formally defined in  Section~\ref{sec:concat}, are illustrated in  Figure~\ref{fig:concatTree} for $n=6$.
The concatenation trees derived from $\cycletree_1$ and $\cycletree_2$ look very similar to the original cycle-joining trees. For the concatenation tree derived from $\cycletree_3$, the representatives are obtained by rotating the initial prefix of 0s of a necklace to the suffix; a post-order traversal produces the corresponding DB sequence in Table~\ref{tab:db6}. This traversal corresponds to visiting these  representatives in reverse lexicographic order that is equivalent
to a construction defined in~\cite{GS18}.  The concatenation tree derived from $\cycletree_4$ is non-trivial and proved to be the basis for discovering our more general result.  Each representative is determined from its parent, and the tree  differentiates ``left-children'' (blue dots)  from ``right-children'' (red dots).
A concatenation construction corresponding to $\pcr{4}$ is obtained by a somewhat unconventional traversal that recursively visits right-children, followed by the current node, followed by the left-children. 

%

%

\section{Bifurcated ordered trees}
\label{sec:bots}

Our new ``concatenation-tree'' approach to generating universal cycles and DB sequences relies on tree structures that mix together ordered trees and binary trees.
First we review basic tree concepts.  Then we introduce our notion of a bifurcated ordered tree together with a traversal called an RCL traversal.


An \defo{ordered tree} is a rooted tree in which the children of each node are given a total order.
For example, a node in an ordered tree with three children has a first child, a second child, and a third (last) child.
In contrast, a \defo{cardinal tree} is a rooted tree in which the children of each node occupy specific positions.
In particular, a \defo{$k$-ary tree} has $k$ positions for the children of each node.
For example, each child of a node in a $3$-ary tree is either a left-child, a middle child, or a right-child.


We consider a new type of tree that is both ordinal and cardinal; while ordered trees have one ``type'' of child, our trees will have two types of children.
We refer to such a tree as a \defo{bifurcated ordered tree} (\defo{BOT}), with the two types of children being \defo{left-children} and \defo{right-children}.
To illustrate bifurcated ordered trees, Figure \ref{fig:bots_3} provides all BOTs with $n=3$ nodes. 
\begin{figure}[h]
    \centering
        \includegraphics{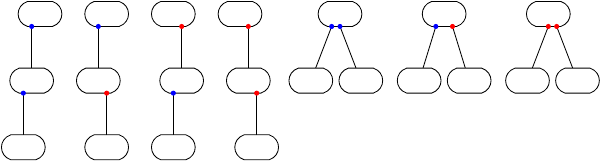}
        \caption{All eight bifurcated ordered trees (BOTs) with $n{=}3$ nodes.  Each left-child descends from a \blue{blue $\bullet$}, while each right-child descends from a \red{red $\bullet$}.       
        }
        \label{fig:bots_3}
\end{figure}
This type of ``ordinal-cardinal'' tree seems quite natural %
and it is very likely to have been used in previous academic investigations.  Nevertheless, the authors have not been able to find an exact match in the literature. In particular, $2$-tuplet trees use a different notion of a root, and correspond more closely to ordered forests of BOTs.  The number of BOTS with $n=1, 2, \ldots , 12$ nodes is given by:


\vspace{-0.1in}

$$1, 2, 7, 30, 143, 728, 3876, 21318, 120175, 690690, 4032015, 23841480.$$

\vspace{-0.05in}

\noindent
This listing corresponds to sequence A006013 in the Online Encyclopedia of Integer Sequences \cite{oeis1}. 



\subsection{Right-Current-Left (RCL) traversals}
\label{sec:bots_RCL}

The distinction between left-children and right-children in a BOT allows for a very natural notion of an \emph{in-order traversal}: 
visit the left-children from first to last, then the current node, then the right-children from first to last.
During our work with concatenation trees (see Section \ref{sec:concat}) it will be more natural to use a modified traversal, in which the right-children are visited before the left-children. 
Formally, we recursively define a \defo{Right-Current-Left (RCL) traversal} of a bifurcated ordered tree starting from the root as follows:

\vspace{-0.39in}

~~
\begin{wrapfigure}[8]{r}{0.33\textwidth}  
        \centering
       \resizebox{2.0in}{!}{\includegraphics{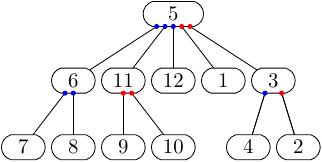}  }     
    \caption{A BOT with its $n{=}12$ nodes labeled as they appear in RCL order.} 
    \label{fig:bots_RCL}
\end{wrapfigure}
\begin{itemize}
    \item visit all right-children of the current node from first to last;
    \item visit the current node;
    \item visit all left-children of the current node from first to last.
\end{itemize}
Note that the resulting RCL order is not the same as a \emph{reverse in-order traversal} (i.e., an in-order traversal written in reverse), since the children of each type are visited in the usual order (i.e., first to last) rather than in reverse order (i.e., last to first).
An example of an RCL traversal is shown in Figure \ref{fig:bots_RCL}.


Define the following  relationships given a node $x$ in a BOT.
\begin{itemize} \small
    \item A \defo{right-descendant} of $x$ is a node obtained by traversing down zero or more right-children.
    \item A \defo{left-descendant} of $x$ is a node obtained by traversing down zero or more left-children.
    \item The \defo{rightmost left-descendant} of $x$ is the node obtained by repeatedly traversing down the last left-child as long as one exists.
    \item The \defo{leftmost right-descendant} of $x$ is the node obtained by repeatedly traversing down the first right-child as long as one exists.
\end{itemize}
Note that a node is its own leftmost right-descendent if it has no right-children.  Similarly, a node is its own rightmost left-descendent if it has no left-children.
The following remark details the cases for when two nodes from a BOT appear consecutively in RCL order; they are illustrated in Figure \ref{fig:nextRCL}.

\begin{remark} \label{rem:nextRCL}
If a bifurcated ordered tree has RCL traversal $\ldots, x, y, \ldots$, then one of the following three cases holds:
\begin{enumerate}[(a)]
    \item $x$ is an ancestor of $y$: $y$ is the leftmost right-descendant of $x$'s first left-child; 
    \item $x$ is a descendant of $y$: $x$ is the rightmost left-descendent of $y$'s last right-child; 
    \item $x$ and $y$ are descendants of a common ancestor $a$ (other than $x$ and $y$): $x$ is the rightmost left-descendant and $y$ is the leftmost right-descendant of consecutive left-children or right-children of $a$.
\end{enumerate} 
Moreover, if the traversal sequence is cyclic (i.e., $x$ is last in the ordering and $y$ is first), there are three additional cases:
\begin{enumerate}[(a),start=4]
    \item $x$ is an ancestor of $y$: $x$ is the root and $y$ is its leftmost right-descendant;
    \item $x$ is a descendant of $y$: $y$ is the root and $x$ is its rightmost left-descendant;
    \item $x$ and $y$ are descendants of a common ancestor $a$ (other than $x$ and $y$):  $x$ is the rightmost left-descendant of the root, and $y$ is the leftmost right-descendant of the root.
\end{enumerate}

\end{remark}
Figure~\ref{fig:nextRCL} illustrates the six cases from the above remark. The three cases provided for cyclic sequences are stated in a way to convince the reader that all options are considered; however, they can be collapsed to the single case (f) if we allow the common ancestor $a$ to be $x$ or $y$.  

\begin{figure}[h]
    \centering
    \resizebox{6in}{!}{\includegraphics{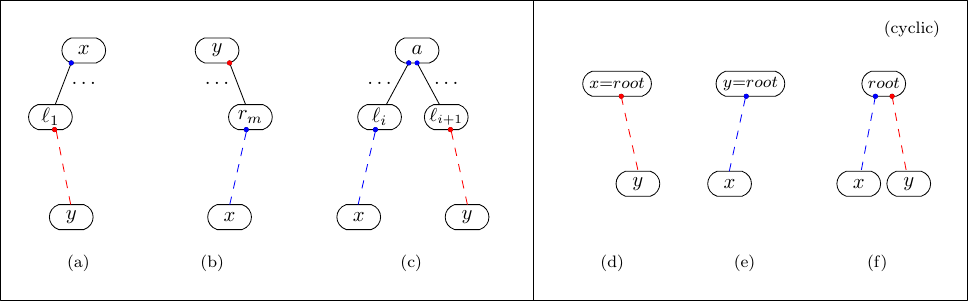}}
    \caption{ Illustrating the six cases outlined in Remark~\ref{rem:nextRCL} for when $y$ follows $x$ in an RCL traversal.  The final three cases hold when the traversal sequence is considered to be cyclic (i.e., $x$ comes last and $y$ comes first).   
    In these images, $\ell_i$ and $r_i$ refer to the $i$th left and right-child of their parent, respectively, and $r_m$ refers to the last right-child of its parent.
    Dashed lines indicate \red{leftmost right-descendants} (red) and \blue{rightmost left-descendants} (blue).
    }
       \label{fig:nextRCL}
\end{figure}

\section{Concatenation trees} 
\label{sec:concat}

Let $\cycletree$ be a PCR-based cycle-joining tree rooted at $r$ satisfying the Chain Property. In this section we describe how $\cycletree$  can be converted into a labeled BOT $\tree$ we call a \defo{concatenation tree}. The nodes and the parent-child relationship in $\tree$ are the same as in $\cycletree$; however, the labels (representatives) of the nodes may change.  The definitions of these labels are defined recursively along with a corresponding \defo{change index}, the unique index where a node's label differs from that of its parent.
The root of $\tree$ is $r$, and it is assigned an arbitrary change index $c$.\footnote{Though the change index of the root is arbitrary, its choice may impact the ``niceness'' of the upcoming RCL sequence.}  The label of a non-root node $\gamma$ depends on the label of its parent $\alpha = \tt{a}_1\tt{a}_2\cdots \tt{a}_n$, which
can be written as $\beta_1 \tt{x} \beta_2$ where
$(\tt{x}\beta_2\beta_1,\tt{y}\beta_2\beta_1)$  is the conjugate pair joining $\alpha$ and $\gamma$ in $\cycletree$.  If $\alpha$ is aperiodic, there is only one possible index $i$ for $\tt{x}$; however, if it is periodic, there will be multiple such indices possible.  
If $\alpha = (\tt{a}_1\cdots \tt{a}_p)^q$ has period $p$ with change index $c$ where $jp < c \leq jp+p$ for some integer $0 \leq j < n/p$, then we say the \defo{acceptable range} of $\alpha$ is $\{ jp{+}1, \ldots , jp{+}p\}$.
Note, if $\alpha$ is aperiodic, its acceptable range is $\{1, 2, \ldots , n\}$.  Now, 
$\alpha = \beta_1\tt{x} \beta_2$ can be written uniquely such that $\tt{x}$ is found at an index $i$ in $\alpha$'s acceptable range.  The label of $\gamma$ is defined to be $\beta_1\tt{y} \beta_2$ with change index $i$. 

\vspace{-0.1in}

\begin{exam} \small
Let  $x=001001001$ be the parent of $y=001002001$ in a PCR-based cycle-joining tree $\cycletree$ joined via the conjugate pair $(100100100, 200100100)$.  Let $\alpha$ and $\gamma$ denote the corresponding nodes in the concatenation tree $\tree$.
Suppose $\alpha = 1001001\underline{0}0$ (a rotation of $x$) with change index $8$.  Since $\alpha$ has period $p=3$, its acceptable range is $\{7,8,9\}$. Thus, $\beta_1 = 100100$, $\tt{x} = 1$, $\beta_2 = 00$, $\alpha = \beta_1 \tt{x} \beta_2$,  and
$\gamma = 100100\bblue{2}00$ (a rotation of $y$) with change index~$7$.
\end{exam}

\vspace{-0.1in}

To complete the definition of $\tree$, we must specify how the children of a node with change index $c$ are partitioned into ordered left-children and right-children: The left-children are those with change index less than $c$, and the right-children are those with change index greater than $c$.  Both are ordered by increasing change index.  A child with change index $c$ can be considered to be either a left-child or right-child.  We say $\tree$ is a 
\defo{left concatenation tree} if every node that has the same change index as its parent is considered to be a left-child; $\tree$ is a 
\defo{right concatenation tree} if every node that has the same change index as its parent is considered to be a right-child.  
 Let
\blue{$\convert(\cycletree,c,\mathit{left})$} denote the left concatenation tree
derived from $\cycletree$ and let 
\blue{$\convert(\cycletree,c,\mathit{right})$} denote the  right concatenation tree derived from $\cycletree$, where in each case the root is assigned change index $c$.  See Figure~\ref{fig:convert} for example concatenation trees, where the small gray box on top of each node indicates the node's change index.

\begin{figure}[ht]
     \centering
    \resizebox{6.5in}{!}{\includegraphics{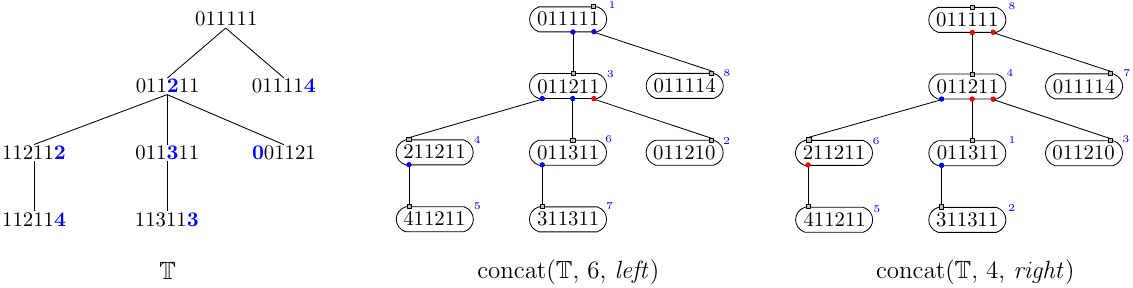}}

    \caption{Left and right concatenation trees  for a given cycle-joining tree $\cycletree$. The small blue numbers indicate the RCL order.}
    \label{fig:convert}
\end{figure}

%

 Let $\blue{\RCL(\bigtree{} ~) =  \ap(\alpha_1, \alpha_2, \ldots, \alpha_t)}$, where 
 $\alpha_1, \alpha_2, \ldots, \alpha_t$ is the sequence of nodes visited in an RCL traversal of the concatenation tree $\bigtree{}$. 
%
For example, if $\bigtree{}$ is the \emph{right} concatenation tree in Figure~\ref{fig:convert}, then:

\vspace{-0.25in}

\[ \RCL(\bigtree{} ~)  = 011311~311~011210~011211~411211~211~011114~011111. \]
\begin{wrapfigure}{r}{0.13\textwidth}
\centering
\vspace{-0.45in}
\resizebox{0.8in}{!}{\includegraphics{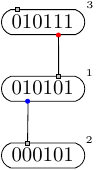}}
\end{wrapfigure}

\vspace{-0.25in}

\noindent
It is critical how we defined the acceptable range for periodic nodes, since our goal is to demonstrate that $\RCL(\bigtree{} ~)$ produces a universal cycle.  
For example, consider three necklace class representatives (a) 010111, (b) 010101, and (c) 000101 where $n=6$.  They can be joined by
flipping the last 0 in (b) and flipping the second 0 in (c); (a) is the parent of (b) and (b) is the parent of (c).  A BOT for this cycle-joining tree is shown on the right.
It is \emph{not} a concatenation tree since the change index for the bottom node is outside the acceptable range of its periodic parent.  
Observe that $\ap(010101, 000101, 010111) =  0100\underline{01\blue{0101}}\blue{01}11.$
Since the substring 010101 appears twice, it is not a universal cycle.

The concatenation trees for the four cycle-joining trees in Figure~\ref{fig:big4} are given in Figure~\ref{fig:concatTree}. 
The only concatenation tree with both left-children and right-children is the one corresponding to $\concat(\cycletree_4,6,\mathit{left})$.
In fact, it was the discovery of this tree that lead us to the introduction of BOTs and our definition of concatenation trees.  We are now ready to state our main result.
%
\begin{figure}[t]
     \centering
    \resizebox{6.5in}{!}{\includegraphics{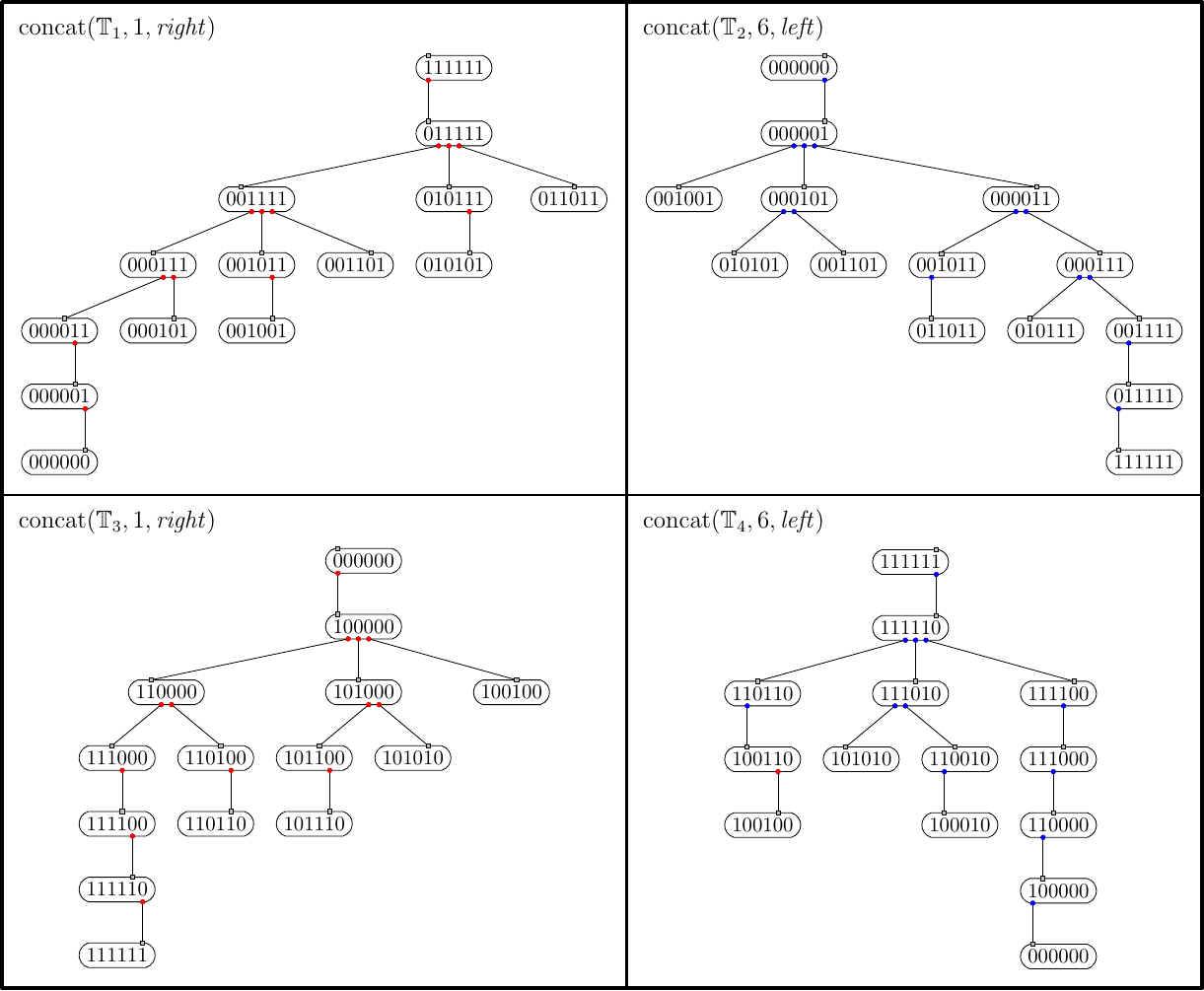}}

    \caption{Concatenation trees for $n=6$ based on $\cycletree_1$, $\cycletree_2$, $\cycletree_3$, $\cycletree_4$. 
    These bifurcated ordered trees (BOTs) provide additional structure to the unordered cycle-joining trees from Figure~\ref{fig:big4}.
    This structure provides the missing information for fully understanding the corresponding concatenation constructions.
    The gray box above each node indicates its change index.  
    }
    \label{fig:concatTree}
\end{figure}

\noindent
%


\begin{result}
\vspace{-0.1in}
\begin{theorem} \label{thm:main}
Let $\cycletree$ be a PCR-based cycle-joining tree satisfying the Chain Property.  Let $\tree_1 = \convert(\cycletree,c,\mathit{left})$ and let $\tree_2 = \convert(\cycletree,c,\mathit{right})$.
 Then
 \begin{itemize}
\item $\RCL(\tree_1~)$ is a universal cycle for $\mathbf{S}_{\cycletree}$ with successor rule $\fup$, and
\item $\RCL(\tree_2~)$ is a universal cycle for $\mathbf{S}_{\cycletree}$ with successor rule $\fdown$.
 \end{itemize}

\vspace{-0.1in}

\end{theorem}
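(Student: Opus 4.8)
The plan is to prove the two claims simultaneously by induction on the number of nodes $t$ in $\cycletree$, showing that the RCL traversal of the concatenation tree realizes exactly the same sequence of cycle-joins that Theorem~\ref{thm:concat} would perform, and that the resulting successor rule is the one advertised. First I would dispose of the base case $t=1$: the concatenation tree is a single node $r$ with change index $c$, $\RCL(\tree) = \ap(r)$ is the aperiodic prefix of $r$, and this is clearly a universal cycle for $\rot{r} = \mathbf{S}_{\cycletree}$ with the trivial (PCR) successor rule, which agrees with both $\fup$ and $\fdown$ restricted to strings belonging to no conjugate pair.

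For the inductive step, the key idea is to peel off a leaf (or, better, to decompose $\cycletree$ at an edge of the concatenation tree). Concretely, I would take the root $r$ of $\tree$ together with its last left-child (for $\tree_1$) or its first right-child (for $\tree_2$) — whichever subtree is visited ``last before returning to $r$'' or ``first,'' guided by Remark~\ref{rem:nextRCL}. The crucial structural lemma to establish is: if $\alpha$ and $\gamma$ are parent and child in the concatenation tree joined with change index $i$ (so $\gamma$'s label is $\beta_1 \tt{y}\beta_2$ where $\alpha = \beta_1\tt{x}\beta_2$, with $\tt{x}$ at position $i$ in $\alpha$'s acceptable range), then the rotation of the necklace cycle of $\alpha$ whose representative-word is $\alpha = \beta_1\tt{x}\beta_2$ ends in the conjugate $\tt{x}\beta_2\beta_1$, and correspondingly the cycle of $\gamma$ written starting at $\beta_1\tt{y}\beta_2$ ends in $\tt{y}\beta_2\beta_1$. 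This is where the acceptable-range definition does its work: it guarantees that when a node is periodic, the chosen index $i$ lands in the ``right copy'' so that the aperiodic-prefix concatenation $\ap(\ldots)$ splices the cycles correctly rather than duplicating a period, exactly as in the cautionary $010101$ example in Section~\ref{sec:concat}. I would verify this lemma by a direct computation on $\ap$ and rotations, handling the aperiodic and periodic cases separately, and then show that the RCL order visits the children of each node in an order consistent with performing the joins from the ``outermost'' conjugate pair inward.

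The second half of the argument is to identify the successor rule. Here I would argue that a string $\delta = \tt{a}_1\cdots\tt{a}_n$ has $\fup(\delta) \neq \tt{a}_1$ precisely when $\delta$ belongs to a conjugate pair of $\cycletree$, i.e., $\delta = \tt{x}_i\beta$ is one of the two boundary strings splicing a parent-chain in the RCL concatenation; and that in that case the symbol immediately following $\delta$ in $\RCL(\tree_1)$ is the first symbol of the next cycle visited, which by the chain-derangement $23\cdots m1$ is exactly $\tt{x}_{i+1}$ (cyclically within the chain) $= \fup(\delta)$; dually $m12\cdots(m-1)$ for $\fdown$ and $\RCL(\tree_2)$. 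The Chain Property is what makes ``the conjugate pair $\delta$ belongs to'' well-defined, so I would invoke it exactly at this point. For periodic nodes I must be slightly careful that the boundary string $\delta$ occurs only once in the concatenation (again the acceptable-range condition), so that the successor is unambiguous. Matching up the left-vs-right convention for children with change index equal to the parent's is a bookkeeping detail: such a child is glued with a conjugate pair at the same index, which puts it into the ``same chain'' and the convention just fixes which side of $r$'s sequence the chain sits on.

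The main obstacle I anticipate is the periodic-node bookkeeping in the structural lemma: showing that $\ap(\alpha_1,\ldots,\alpha_t)$ along an RCL order is genuinely a concatenation of full necklace cycles (each contributing its period once, in the correct rotation) requires tracking, for each periodic node, which copy of its period is ``consumed'' by the parent-join and which by each child-join, and confirming these never collide — this is precisely what the acceptable-range and the increasing-change-index ordering of children are engineered to guarantee, but making the argument airtight (rather than merely plausible from the $n=6$ figures) is the delicate step. Everything else reduces to Theorem~\ref{thm:concat}, the case analysis of Remark~\ref{rem:nextRCL}, and the definitions of $\fup$ and $\fdown$.
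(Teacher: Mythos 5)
Your skeleton does coincide with the paper's: induction on the number of nodes, splicing two universal cycles via Theorem~\ref{thm:concat}, the Chain Property to make the relevant conjugate pair well defined, a bookkeeping check that each join alters only two successors, and care with periodic nodes via the acceptable range. (The paper removes an arbitrary leaf rather than splitting off the subtree of the root's last left-child or first right-child; that choice is inessential in principle, though the leaf version is lighter, since one of the two pieces is then a single cycle $\ap(\alpha_j)$ whose content is known exactly, so prefix/suffix control is needed for only one piece instead of two.)

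The genuine gap is that your ``crucial structural lemma'' is aimed at the wrong pair of nodes, and that is precisely where all of the paper's real work sits. After the leaf $\alpha_j=\beta_1\tt{y}\beta_2$ is removed, the splice in the concatenation happens between its RCL-neighbors $\alpha_{j-1}$ and $\alpha_{j+1}$, which are in general \emph{not} its parent but distant relatives (cases (b), (c), (f) of Remark~\ref{rem:nextRCL}). What must be proved is the paper's Claim~\ref{claim:main}: the remaining cyclic concatenation $\ap(\alpha_{j+1},\ldots,\alpha_{j-1})$ has prefix $\beta_1$ and suffix $\tt{x}\beta_2$, where $\tt{x}=\tt{y}'$ (the parent's symbol) for a left concatenation tree but $\tt{x}=\firstg(\tt{y}'\beta_1\beta_2)$, the first symbol of the chain, for a right concatenation tree. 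Your lemma instead concerns the parent's and child's own cycles, which after a suitable rotation is essentially automatic and says nothing about $\alpha_{j\pm1}$; in particular it has no chain-aware clause, which is exactly what separates $\fdown$ from $\fup$ and makes the right-tree case nontrivial (the second half of Lemma~\ref{lem:suf}). Moreover these prefixes and suffixes must survive truncation to aperiodic prefixes when the neighbors are periodic, which is the content of Lemmas~\ref{lem:leaf} through~\ref{lem:suf_period_strong}; you explicitly defer this (``making the argument airtight \ldots is the delicate step''), so the core of the proof is missing rather than merely compressed. The same omission undermines your identification of the successor rule: a conjugate $\tt{x}_i\beta$ typically occurs straddling the boundary between two nodes' aperiodic prefixes, so ``the symbol following it is the first symbol of the next cycle visited'' is not literally correct; the paper instead argues inductively that each application of Theorem~\ref{thm:concat} changes only the successors of the two strings in the conjugate pair and checks those two values against $\fup$ and $\fdown$ --- an argument that again relies on Claim~\ref{claim:main} to know where the splice occurs.
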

\end{result}
\begin{proof}
Let $\tree$ represent either $\tree_1$ or $\tree_2$.  We specify whether $\tree$ is a left-concatenation tree $\tree_1$ or a right-concatenation tree $\tree_2$ only when necessary.
Let $\alpha_1, \alpha_2, \ldots, \alpha_t$ be the nodes of $\tree$ as they are visited in RCL order.
%
%
The proof of Theorem~\ref{thm:main} is by induction on $t$. 
In the base case case when $t=1$, the result is immediate; $\tree$ contains a single cycle and in each case the successor rule simplifies to $f(\tt{a}_1\tt{a}_2\cdots \tt{a}_n) = \tt{a}_1$. 
Suppose $t>1$.
%
%
 Let  $\alpha_j = \tt{a}_1\tt{a}_2\cdots \tt{a}_n$ denote an arbitrary leaf of $\bigtree{}$ with change index $c_j$. Let $\beta_1 = \tt{a}_1\cdots \tt{a}_{c_j-1}$, $\tt{y} = \tt{a}_{c_j}$, and $\beta_2 = \tt{a}_{c_j+1}\cdots \tt{a}_n$.
 Then $\alpha_j =  \beta_1 \tt{y} \beta_2$ and its parent is $\beta_1 \tt{y}' \beta_2$ for some $\tt{y}' \in \Sigma$; the
corresponding nodes in $\cycletree$ are joined via the conjugate pair $(\tt{y}\beta_1  \beta_2, \tt{y'}\beta_1\beta_2)$.
 %
\blue{If $\tree = \tree_1$, let $\tt{x} = \tt{y}'$;  if $\tree = \tree_2$, let $\tt{x} = \firstg(\tt{y'}\beta_1  \beta_2)$  with respect to $\cycletree$} (recalling the definition of $\firstg$ in Section~\ref{sec:succ-kary}).
Let $\tree'$ denote the concatenation tree obtained by removing $\alpha_j$ from $\bigtree{}$. Similarly, let $\cycletree'$ denote the cycle-joining tree $\cycletree$ with the leaf corresponding to $\alpha_j$ removed.
Let $U_1 = \ap(\alpha_{j+1}, \ldots, \alpha_t, \alpha_1, \ldots, \alpha_{j-1})$ denote a rotation of $\RCL(\tree')$.   
By induction, $U_1$ is a universal cycle for $\mathbf{S'} = \mathbf{S}_{\cycletree} - \rot{\alpha_j}$.
Let $U_2 = \ap(\alpha_j)$; it is a universal cycle for $\rot{\alpha_j}$.  %
Note that $U_1$ contains $\tt{x} \beta_2\beta_1$ and $U_2$ contains $\tt{y}\beta_2\beta_1$.  The following claim will be proved later in Section~\ref{sec:proof}.
\begin{claim} \label{claim:main}
$U_1$ (considered cyclically) has prefix $\beta_1$ and suffix $\tt{x}\beta_2$.
\end{claim}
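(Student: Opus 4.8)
\textbf{Proof proposal for Claim~\ref{claim:main}.}

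The plan is to reduce the claim to a statement about the RCL traversal of the \emph{parent} of $\alpha_j$, which I will denote $\alpha_p = \beta_1 \tt{y}' \beta_2$ (so $\alpha_p$ has change index at the position of $\tt{y}'$, namely $c_j$, in the left-tree case, and similarly lies in the chain rooted at $\tt{x}\beta_1\beta_2$ otherwise). The key observation is that in the RCL order of $\tree$, the nodes immediately surrounding $\alpha_j$ and the nodes immediately surrounding $\alpha_p$ in $\tree'$ are governed by Remark~\ref{rem:nextRCL}, applied to the BOT structure. Since $\alpha_j$ is a leaf, removing it from $\tree$ to form $\tree'$ only changes the RCL sequence locally: the node preceding $\alpha_j$ and the node following $\alpha_j$ in $\tree$ become consecutive in $\tree'$. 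So it suffices to understand $U_1 = \ap(\alpha_{j+1},\ldots,\alpha_t,\alpha_1,\ldots,\alpha_{j-1})$ as a cyclic word, and in particular to locate where the ``seam'' at $\alpha_j$'s former position sits.

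First I would show that $U_1$, read cyclically, contains the string $\tt{x}\beta_2\beta_1$ with the stated split: $\beta_1$ appears as a cyclic prefix and $\tt{x}\beta_2$ as a cyclic suffix. The natural route is the inductive hypothesis itself: by induction $U_1 = \RCL(\tree')$ is a universal cycle for $\mathbf{S}'$ with successor rule $\fup$ (resp.\ $\fdown$), and the successor-rule characterization tells us exactly which symbol follows $\tt{x}\beta_2\beta_1$ in $U_1$. The point is that the aperiodic-prefix concatenation $\ap(\alpha_{j+1},\ldots)$ places the \emph{label} of $\alpha_p$ somewhere in the cycle, and because $\alpha_p$'s change index is $c_j$ (the index of $\tt{x}=\tt{y}'$ within $\beta_1\tt{y}'\beta_2$), the conjugate-pair / change-index machinery from Section~\ref{sec:concat} forces the relevant rotation of $\alpha_p$ that ends in $\tt{x}\beta_2$ and begins in $\beta_1$ to be exactly the boundary between the aperiodic prefix of $\alpha_p$'s predecessor-block and that of $\alpha_p$ (or within $\alpha_p$'s block, depending on periodicity). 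I would handle the aperiodic and periodic cases for $\alpha_p$ separately here, using that $c_j$ lies in $\alpha_p$'s acceptable range — this is precisely why the acceptable-range definition was needed, as flagged in the ``bad BOT'' example before Theorem~\ref{thm:main}.

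Concretely, the steps I would carry out are: (1) identify $\alpha_p$ and its position in the RCL order of $\tree'$, noting that $\alpha_j$ being a leaf means $\alpha_p$ is among $\{\alpha_{j+1},\ldots,\alpha_t,\alpha_1,\ldots,\alpha_{j-1}\}$ and is adjacent in RCL order to the former neighbors of $\alpha_j$; (2) use Remark~\ref{rem:nextRCL} to pin down, case by case (ancestor / descendant / common-ancestor, and left-child vs.\ right-child of $\alpha_p$ according to whether $\tree=\tree_1$ or $\tree_2$), which rotation of which node abuts $\alpha_p$'s block on each side; (3) invoke the conjugate-pair definition joining $\alpha_p$ and $\alpha_j$ together with the change-index/acceptable-range constraint to compute the cyclic substring of $U_1$ straddling that boundary, and read off that it equals $\beta_1\tt{x}\beta_2\beta_1$ split as claimed; (4) verify the degenerate cases — $\alpha_p$ a leaf of $\tree'$, $\alpha_p$ periodic, $\alpha_j$ having no siblings, and the cyclic wrap-around cases (d)--(f) of Remark~\ref{rem:nextRCL}.

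The main obstacle I expect is step (3) in the \emph{periodic} case: when $\alpha_p = (\tt{a}_1\cdots\tt{a}_p)^q$, the label $\tt{x}\beta_2\beta_1$ we want to exhibit as a cyclic factor of $U_1$ is only a rotation of $\ap(\alpha_p)^{?}$, and one must check that the specific rotation forced by the change index $c_j$ really does produce the split $\beta_1 \mid \tt{x}\beta_2$ at the correct position relative to the aperiodic-prefix concatenation — i.e., that no ``off by a period'' ambiguity survives. This is exactly the subtlety the acceptable-range definition was engineered to kill, so I would lean on the uniqueness statement there (``$\alpha = \beta_1\tt{x}\beta_2$ can be written uniquely such that $\tt{x}$ is at an index in $\alpha$'s acceptable range'') and on the fact that $\ap$ outputs only one period's worth of symbols per node, to argue the seam lands where claimed. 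A secondary nuisance will be the bookkeeping of left- vs.\ right-concatenation-tree conventions interacting with the $\firstg(\cdot)$ definition of $\tt{x}$ when $k>2$ and chains of length $>2$ are present; I would isolate that into a small sub-lemma about how a chain's derangement $23\cdots m1$ (resp.\ $m12\cdots(m{-}1)$) matches the left-to-right (resp.\ right-to-left) ordering of the chain's nodes in the RCL traversal.
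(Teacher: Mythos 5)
There is a genuine gap, and it sits in the central reduction. You frame the claim as a statement about where the seam of $U_1$ sits relative to the \emph{parent} $\alpha_p=\beta_1\tt{y}'\beta_2$, asserting that since $\alpha_j$ is a leaf, $\alpha_p$ is ``adjacent in RCL order to the former neighbors of $\alpha_j$.'' That is false in general: if $\alpha_j$ is, say, the third of several left-children, then $\alpha_{j-1}$ is the rightmost left-descendant of the previous sibling's subtree and $\alpha_{j+1}$ is the leftmost right-descendant of the next sibling's subtree, while $\alpha_p$'s own block in $U_1$ can be far from both. The claim is a \emph{positional} statement about the two blocks that actually abut the removal point, namely $\ap(\alpha_{j+1})$ at the front and $\ap(\alpha_{j-1})$ at the back, and your other main tool cannot supply that position either: invoking the inductive hypothesis that $U_1$ is a universal cycle with successor rule $\fup$ (resp.\ $\fdown$) only yields rotation-invariant information (which symbol follows $\tt{x}\beta_2\beta_1$ somewhere in the cycle); it cannot tell you that this particular occurrence straddles the seam between $\alpha_{j-1}$'s block and $\alpha_{j+1}$'s block, which is the whole content of the claim.

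What is actually needed — and what the paper proves — is a propagation argument along the RCL order: using the case analysis of Remark~\ref{rem:nextRCL} together with monotonicity of change indices, one shows directly that $\alpha_{j+1}$ has prefix $\beta_1$ (Lemma~\ref{lem:pre}) and $\alpha_{j-1}$ has suffix $\tt{x}\beta_2$ (Lemma~\ref{lem:suf}, where the left/right tree distinction and $\firstg$ enter). That settles the aperiodic case outright. When a neighbor is periodic, its block contributes only $\ap(\cdot)$, which may be shorter than $\beta_1$ or $\tt{x}\beta_2$; this is handled by the acceptable-range bounds $c_j\le kp+p$ and $c_j>kp$ (Lemmas~\ref{lem:leaf} and~\ref{lem:leaf2}) combined with the accumulation results that $\ap(\alpha_{j+1})^{k+1}$ is a prefix of $U_1$ and $\ap(\alpha_{j-1})^{n/p-k}$ is a suffix of $U_1$ (Lemmas~\ref{lem:pre_period_strong} and~\ref{lem:suf_period_strong}). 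You correctly flag Remark~\ref{rem:nextRCL}, the acceptable range, and the $\firstg$ subtlety as ingredients, but without the neighbor-propagation lemmas and the periodic accumulation lemmas your step (3) — ``read off that it equals $\beta_1\tt{x}\beta_2\beta_1$ split as claimed'' — has no support, and the parent-centric seam analysis it rests on does not match the geometry of an RCL traversal.
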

Let $U'_1 = \cdots\tt{x}\beta_2\beta_1$ and let $U'_2 = \cdots \tt{y}\beta_2\beta_1$ be rotations of $U_1$ and $U_2$, respectively.
Then by Theorem~\ref{thm:concat} and Claim~\ref{claim:main}, $U_1$ and $U_2$ can be joined via the conjugate pair $(\tt{x}\beta_2\beta_1, \tt{y}\beta_2\beta_1)$ to produce universal cycle $U'_1U'_2$, which is a rotation of $U_1U_2$, for $\mathbf{S}_{\cycletree}$. Since $U_1U_2$ is a rotation of $U = \RCL(\bigtree{}~)$, the latter is also a universal cycle for $ \mathbf{S}_{\cycletree}$.
%


Clearly $\fup=\fdown$ with respect to the single PCR cycle $\rot{\alpha_j}$; both functions are successor rules for $U_2$. Suppose $\tree = \tree_1$. From the induction hypothesis, $\fup$ (with respect to $\cycletree'$) is a successor rule for $U_1$.  Since the two cycles $U_1$ and $U_2$ were joined via the conjugate pair $(\tt{x}\beta_2\beta_1, \tt{y}\beta_2\beta_1)$ to obtain $U$; the successors of only these two strings are altered. 
By the joining, the successor of $\tt{y}\beta_2\beta_1$ becomes the successor of $\tt{x}\beta_2\beta_1$ in $U_1$ which is precisely $\fup(\tt{y}\beta_2\beta_1)$ with respect to $\cycletree$.  The successor of $\tt{x}\beta_2\beta_1$ is $\tt{y}$, which is the same as $\fup(\tt{x}\beta_2\beta_1)$ with respect to $\cycletree$. Thus, $\fup$ (with respect to $\cycletree$) is a successor rule for $U$. A similar argument applies for $\tree = \tree_2$.  
\end{proof}

\begin{remark} \label{rem:unique}
Consider a cycle-joining tree $\cycletree$ where all chains in $\cycletree$ have length $m=2$.
Then $\cycletree$ induces a unique universal cycle with successor rule $\fup = \fdown$.  Furthermore, if $k=2$, $f_0 = \fup = \fdown$.
\end{remark}
\subsection{Algorithmic details and analysis} \label{sec:algo}

A concatenation tree can be traversed to produce a universal cycle in 
$O(1)$-amortized time per symbol; but, it requires exponential space to store the tree.  However, if the children of a given node $\alpha$ can be computed without knowledge of the entire tree, then we can apply Algorithm~\ref{algo:recursive} to traverse a  concatenation tree $\bigtree{}$  in a space-efficient manner. The initial call is \Call{RCL}{$\alpha$,$c$, $\ell$} where $\alpha=\tt{a}_1\tt{a}_2\cdots \tt{a}_n$ is the root node with change index $c$.  The variable $\ell$ is set to 1 for left concatenation trees; $\ell$ is set to 0 for right concatenation trees.
The crux of the algorithm is the function 
\Call{Child}{$\alpha,i$} which returns $\tt{x}$ if there exists $\tt{x} \in \Sigma$ such that $\tt{a}_1\cdots \tt{a}_{i-1}\tt{x}\tt{a}_{i+1}\cdots \tt{a}_n$ is a child of $\alpha$, or $-1$ otherwise.  Since the underlying cycle-joining tree satisfies the Chain Property, if such an $\tt{x}$ exists then it is unique.
In practice, the function must consider the acceptable range of $\alpha$.


\begin{algorithm}[ht]      
\caption{Traversing a  concatenation tree $\bigtree{}$ in RCL order rooted at $\alpha$ with change index $c$}  
\label{algo:recursive}      

\small
\begin{algorithmic} [1]                   
\Procedure{RCL}{$\alpha = \tt{a}_1\cdots \tt{a}_n$, $c$, $\ell$}

\For{$i\gets c+\ell$ {\bf to} $n$}  \ \ \ \ \  \  \ \ \blue{$\triangleright$ Visit right-children}  
    \State $x \gets \Call{Child}{\alpha,i}$
     \If{$\tt{x} \neq -1$} \ \  \Call{RCL}{$\tt{a}_1\cdots \tt{a}_{i-1}\tt{x}\tt{a}_{i+1}\cdots \tt{a}_n$, $i$, $\ell$} \EndIf
\EndFor
\State $p \gets $ period of $\alpha$
\State \Call{Print}{$\tt{a}_1\cdots \tt{a}_p$} 
\For{$i\gets 1$ {\bf to} $c-1+\ell$}  \ \   \blue{$\triangleright$ Visit left-children}  
    \State $\tt{x} \gets \Call{Child}{\alpha,i}$
     \If{$\tt{x} \neq -1$} \ \  \Call{RCL}{$\tt{a}_1\cdots \tt{a}_{i-1}\tt{x}\tt{a}_{i+1}\cdots \tt{a}_n$, $i$, $\ell$} \EndIf
\EndFor

\EndProcedure
\end{algorithmic}
\end{algorithm}

Let $H$ denote the height of $\bigtree{}$. Provided each call to \Call{Child}{$\alpha,i$} uses at most $O(n)$ space, the overall algorithm will require $O(n+H)$ space assuming $\alpha$ is passed by reference (or stored globally) and restored appropriately after each recursive call.   The running time of  Algorithm~\ref{algo:recursive} depends on how efficiently the function \Call{Child}{$\alpha,i$} can be computed for each index $i$.

\begin{theorem} \label{thm:algo}
   Let $\bigtree{}$ be a  concatenation rooted at $\alpha$ with change index $c$.   The sequence resulting from a  call to \Call{RCL}{$\alpha$, $c$, $\ell$} is generated in $O(1)$-amortized time per symbol if 
   (i) at each recursive step the work required by all calls to \Call{Child}{$\alpha,i$} is
   $O((t+1)n)$, where $t$ is the number of $\alpha$'s children, and 
   (ii) the number of nodes in $\bigtree{}$ that are periodic is less than some constant times the number of nodes that are aperiodic.
\end{theorem}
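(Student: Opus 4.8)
The plan is to analyze the recursion tree of calls to \Call{RCL}{} and to amortize the cost of the \Call{Child}{} calls against the symbols actually printed. Each invocation of \Call{RCL}{$\alpha,c,\ell$} corresponds to exactly one node of $\bigtree{}$ and prints the aperiodic prefix of $\alpha$, contributing $p \ge 1$ symbols where $p$ is the period of $\alpha$. So the total number of printed symbols is $\sum_{\text{nodes }\alpha} (\text{period of }\alpha)$, which is at least $t$ (the number of nodes) and, in the aperiodic case, equals $\sum n = tn$. The work done inside a single invocation, excluding recursive calls, consists of the two \texttt{for} loops over indices $1,\dots,n$, which together make at most $n$ calls to \Call{Child}{}; by hypothesis (i) the total work of those calls at this node is $O((t_\alpha+1)n)$ where $t_\alpha$ is the number of children of $\alpha$. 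Summing over all nodes, $\sum_\alpha (t_\alpha+1) = (t-1) + t = O(t)$ since each node is a child of exactly one other node, so the total work across the whole traversal is $O(tn)$.

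First I would make precise that the total output length is $\Theta(tn)$: the upper bound $tn$ is immediate, and the lower bound uses hypothesis (ii). Let $A$ be the number of aperiodic nodes and $P$ the number of periodic nodes, so $t = A + P$ and by (ii) we have $P \le c_0 A$ for some constant $c_0$, hence $A \ge t/(1+c_0)$. Each aperiodic node prints $n$ symbols, so the output has length at least $An \ge tn/(1+c_0) = \Omega(tn)$. Combined with the $O(tn)$ bound on total work, the amortized cost per printed symbol is $O(tn)/\Omega(tn) = O(1)$. I would also remark that the recursion's control overhead (entering/leaving a call, the loop bookkeeping) is $O(n)$ per node and thus absorbed into the same $O(tn)$ total.

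The one subtlety worth spelling out is the handling of periodic nodes and the acceptable range inside \Call{Child}{}: the loop bounds $c+\ell$ to $n$ and $1$ to $c-1+\ell$ still range over all of $\{1,\dots,n\}$, so hypothesis (i) is stated to cover exactly the cost charged at a node regardless of periodicity, and hypothesis (ii) is precisely what prevents a pathological tree with many short-period nodes (each printing few symbols but still paying $\Theta(n)$ for its \Call{Child}{} calls) from ruining the amortization. I expect the main obstacle — really the only place needing care — is this amortization argument: one must charge the $\Theta(n)$ work at each node (including periodic ones) against printed symbols globally rather than locally, since a single periodic node may print as few as one symbol while doing $\Theta(n)$ work; the global accounting works because aperiodic nodes are a constant fraction of all nodes and collectively print $\Omega(tn)$ symbols. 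Everything else — that each \Call{RCL}{} call visits each node exactly once, that passing $\alpha$ by reference keeps the per-node space at $O(n)$ and stack depth at $O(H)$ — follows directly from the structure of Algorithm~\ref{algo:recursive} and the preceding discussion.
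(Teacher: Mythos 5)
Your proposal is correct and follows essentially the same argument as the paper's proof: you charge the $O((t_\alpha+1)n)$ cost of the \textsc{Child} calls across the nodes (the paper amortizes it locally onto children, you sum $\sum_\alpha(t_\alpha+1)=2t-1$ globally, which is the same accounting), and you use condition (ii) exactly as the paper does to guarantee the output length is $\Omega(tn)$. Your version merely spells out more explicitly the step the paper states tersely, namely that aperiodic nodes form a constant fraction of all nodes and each contributes $n$ output symbols.
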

\begin{proof}
    The work done at each recursive step is $O(n)$ plus the cost associated to all calls to \Call{Child}{$\alpha,i$}.  If condition (i) is satisfied, then the work can be amortized over the $t$ children if $t\geq 1$, or onto the node itself if there are no children. Thus, each recursive node is the result of $O(n)$ work. By condition (ii), the total number of symbols output will be proportional to $n$ times the number of nodes. Thus, each symbol is output in $O(1)$-amortized time.
\end{proof}
%

\subsection{Properties of concatenation trees}
\label{sec:properties}


Let $\alpha_1, \alpha_2, \ldots, \alpha_t$ be the nodes of a concatenation tree $\tree$ as they are visited in RCL order.  Our proof of Claim~\ref{claim:main} relies on properties exhibited between successive nodes in an RCL traversal of $\tree$.   The operations of the indices are taken modulo $t$, i.e.,  $\alpha_0 = \alpha_t$ and $\alpha_{t+1} = \alpha_1$. Recall that $c_i$ denotes the change index of $\alpha_i$.
For the rest of this section, consider a node $\alpha_j = \tt{a}_1\tt{a}_2\cdots \tt{a}_n$, for some $1 \leq j \leq t$, with
change index $c_j$.  Let $\beta_1 = \tt{a}_1\tt{a}_2\cdots \tt{a}_{c_j-1}$, $\tt{y} = \tt{a}_{c_j}$ and let $\beta_2 = \tt{a}_{c_j+1}\cdots \tt{a}_{n}$; $\alpha_j = \beta_1 \tt{y} \beta_2$.

\begin{lemma} \label{lem:pre}
If $\alpha_j$ is not an ancestor of $\alpha_{j+1}$, then $\alpha_{j+1}$ has prefix $\beta_1$. 
\end{lemma}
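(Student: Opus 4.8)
\textbf{Proof proposal for Lemma~\ref{lem:pre}.}

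The plan is to analyze the RCL-order predecessor/successor structure via Remark~\ref{rem:nextRCL}. Since $\alpha_j$ is assumed not to be an ancestor of $\alpha_{j+1}$, we are in case (b) or case (c) of that remark (together with their cyclic counterparts (e), (f)); in all of these, $\alpha_{j+1}$ is the \emph{leftmost right-descendant} of some node $w$, where $w$ is either $\alpha_j$'s last right-child's relevant ancestor (case b/e), or one of the children of a common ancestor $a$ of $\alpha_j$ and $\alpha_{j+1}$ (case c/f). The key point I want to extract is: in every such case, $\alpha_{j+1}$ is the leftmost right-descendant of a node $w$ that is itself a right-child (or the root in the cyclic case) of some node $v$ on the path from the root toward $\alpha_j$ — equivalently, $\alpha_{j+1}$ is reached from $v$ by first stepping to a right-child and then repeatedly taking first-right-children.

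The heart of the argument is then a structural claim about what prefix a ``leftmost right-descendant of a right-child'' must have. First I would establish the single-step version: if $\delta$ is a right-child of $\gamma$ with change index $c_\delta > c_\gamma$, then $\delta$ and $\gamma$ agree on positions $1, \dots, c_\delta - 1$ (they differ only at position $c_\delta$, by the definition of the concatenation tree and change index). Iterating this down a chain of right-children with \emph{strictly increasing} change indices $c_{w} = c_{i_0} < c_{i_1} < c_{i_2} < \cdots$ (the ``leftmost right-descendant'' path always takes the first right-child, hence the smallest change index exceeding the parent's — but crucially still exceeding it), I get that every node on this descending path agrees with $w$ on the first $c_w - 1$ positions, and in fact each successive node only modifies a position $\geq c_w$. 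Hence $\alpha_{j+1}$ agrees with $w$ on positions $1,\dots,c_w-1$. Finally I need $c_w - 1 \geq c_j - 1 = |\beta_1|$ and that $w$ itself agrees with $\alpha_j$ on the first $c_j - 1$ positions; this comes from tracing back up: $w$ is a right-child (change index $> $ its parent's) lying ``to the right'' of the branch containing $\alpha_j$, and the common ancestor $a$ (or $\alpha_j$ itself in case b) has change index $\le c_j$, with $\alpha_j$ being a rightmost-left-descendant, so every node from $a$ down to $\alpha_j$ preserves the prefix up to length $c_j - 1$ as well — left-children also only change positions $< $ parent's change index, and the rightmost-left-descendant path to $\alpha_j$ keeps decreasing change indices but stays $\ge$ ... here I must be careful.

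The main obstacle I expect is precisely this bookkeeping of \emph{which} positions get modified along the mixed up-and-down path from $\alpha_j$ to $\alpha_{j+1}$, and verifying the inequality $c_w \ge c_j$ so that the prefix $\beta_1$ (of length $c_j - 1$) is genuinely preserved. The clean way to handle it is probably a small auxiliary lemma: \emph{along any root-to-node path in a concatenation tree, if the node is reached from its parent as a left-child then the parent's label agrees with the child's on all positions $\ge$ the child's change index is false in general} — so instead I would phrase it as: the label of any node agrees with the label of any of its left-descendants on all positions that are $\ge$ that descendant's... no. The robust formulation: for a node $u$ with child $u'$, $u$ and $u'$ agree on every position \emph{except} position $c_{u'}$, and $c_{u'} < c_u$ iff $u'$ is a (strict) left-child while $c_{u'} > c_u$ iff $u'$ is a (strict) right-child. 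From this, going \emph{down} a first-right-child chain from $w$ only ever edits positions $> c_w$; going down toward $\alpha_j$ along last-left-children from the common ancestor $a$ only edits positions $< c_a \le$ (I need) $c_j$... and here the acceptable-range/periodicity subtlety (the very thing the bad-BOT example warns about) is what guarantees things line up. I would isolate that as the one delicate case and handle periodic nodes by invoking that their change index stays within a single period-block, so the prefix comparison still goes through. Once the position-bookkeeping lemma is stated cleanly, Lemma~\ref{lem:pre} follows by applying it to each of cases (b), (c), (e), (f) in turn, which is then routine.
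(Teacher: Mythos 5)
Your high-level plan is the same as the paper's: restrict to cases (b), (c), (e), (f) of Remark~\ref{rem:nextRCL} (cases (a), (d) being excluded by the hypothesis) and track which positions can change along the tree path from $\alpha_j$ to $\alpha_{j+1}$, using the facts that a child differs from its parent only at the child's change index, that left-children have change index at most the parent's, and right-children at least the parent's. But two of your concrete claims would fail. First, the unifying reduction you propose --- that in every remaining case $\alpha_{j+1}$ is the leftmost right-descendant of some right-child $w$ reached by stepping right off the branch containing $\alpha_j$ --- is false in cases (b) and (e): there $\alpha_{j+1}$ is an \emph{ancestor} of $\alpha_j$ (in (b) it is the parent of its last right-child $r_m$, of which $\alpha_j$ is the rightmost left-descendant; in (e) it is the root), and it is not the leftmost right-descendant of any node, so those cases are simply not covered by your scheme and must be argued directly (as the paper does).

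Second, the central bookkeeping is never pinned down, and the one inequality you reach for points the wrong way. What is needed is that \emph{every} position at which $\alpha_{j+1}$ can differ from $\alpha_j$ is $\geq c_j$: going up from $\alpha_j$ along its rightmost-left-descendant chain (to $r_m$, to $\ell_i$, or to the root) the change indices weakly increase from $c_j$, so all edits on that segment sit at positions $\geq c_j$; then $\ell_{i+1}$ differs from $a$ at its change index, which exceeds that of $\ell_i$ and hence is $\geq c_j$ (similarly $r_m$ differs from $\alpha_{j+1}$ at a position $\geq c_j$ in case (b)); and the descent to $\alpha_{j+1}$ along first right-children only edits still larger positions. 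Your draft instead asserts that the descent toward $\alpha_j$ ``only edits positions $< c_a$'' and that you need $c_a \le c_j$; the latter is neither needed nor true in general (when $\ell_i,\ell_{i+1}$ are left-children of $a$ one has $c_a \ge c_{\ell_i} \ge c_j$), and the relevant bound on those edits is the opposite one, namely $\ge c_j$. Finally, the acceptable-range/periodicity subtlety you defer to plays no role in Lemma~\ref{lem:pre}; it only enters the later periodic-node lemmas (Lemma~\ref{lem:leaf} through Lemma~\ref{lem:suf_period_strong}), so leaning on it here leaves the actual argument for this lemma incomplete.
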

%
\begin{proof}
%

\noindent
Let $x=\alpha_{j}$ and $y=\alpha_{j+1}$.  Following the notation from Figure~\ref{fig:nextRCL}, 
consider the four possible cases (b)(c)(e)(f) from Remark~\ref{rem:nextRCL}.  If $t=1$, the results are immediate. Suppose $t>1$.
  
%
\begin{enumerate}
%
\item[(b)] $r_m$ clearly has prefix $\beta_1$. Since the change index of $r_m$ is less than or equal to $c_j$, and $r_m$ only differs from its parent $y$ at its change index, $y$ must also have the prefix $\beta_1$. 

\item[(c)] $\ell_i$ clearly has prefix $\beta_1$. The change index of $\ell_i$ is strictly less than the change index of $\ell_{i+1}$ and the two nodes differ only at those two indices.
Thus, $\beta_1$ is a prefix of $\ell_{i+1}$.   Since $y$ can only differ from $\ell_{i+1}$ in indices between the change index of $\ell_{i+1}$ and $c_{j+1}$, it must also have the prefix $\beta_1$.


\item[(e)] Trivial.

\item[(f)]  All the nodes on the path from $x$ up to the root and down to $y$ must have change index greater than or equal to $c_j$.  Thus each node, including $y$ will have prefix $\beta_1$. 
\end{enumerate}
\end{proof}

\begin{lemma} \label{lem:leaf}
If $\alpha_j$ is not an ancestor of $\alpha_{j+1}$, and 
$\alpha_{j+1}$ has period $p<n$ with acceptable range $kp+1, \ldots , kp + p$, then $c_j \leq kp+p$.    
\end{lemma}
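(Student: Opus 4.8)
\textbf{Proof proposal for Lemma~\ref{lem:leaf}.}

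The plan is to work through the cases of Remark~\ref{rem:nextRCL} under which $\alpha_j$ is \emph{not} an ancestor of $\alpha_{j+1}$, namely cases (b), (c), (e), and (f), and in each case track the change index of $\alpha_j$ against the acceptable range of $\alpha_{j+1}$. The key observation is that the acceptable range of $\alpha_{j+1}$ is $\{kp{+}1, \ldots, kp{+}p\}$, so it suffices to show $c_j \le kp+p$; equivalently, $c_j$ cannot lie strictly above $\alpha_{j+1}$'s acceptable range. Since Lemma~\ref{lem:pre} already tells us (in these same cases) that $\alpha_{j+1}$ has prefix $\beta_1 = \tt{a}_1\cdots \tt{a}_{c_j-1}$, we know the first $c_j-1$ symbols of $\alpha_{j+1}$ agree with a string whose change structure is controlled by $\alpha_j$; the task is to convert this into an inequality on $c_j$ relative to $p$ and $k$.

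First I would handle case (e), which should be essentially trivial: here $\alpha_{j+1}$ is the root, whose change index is arbitrary but which by the construction in Section~\ref{sec:concat} is chosen within the root's acceptable range, and $\alpha_j$ is its rightmost left-descendant — so the change indices along the left-descendant path are non-increasing, forcing $c_j$ to be at most the root's change index, which lies in the acceptable range. Next, cases (b) and (c): in (b), $\alpha_j$ is the rightmost left-descendant of $\alpha_{j+1}$'s last right-child $r_m$, and $r_m$ has change index $> c_{j+1}$, with the change indices along the left-descendant path from $r_m$ down to $\alpha_j$ non-increasing. Because $\alpha_{j+1}$ is periodic with period $p$ and acceptable range $\{kp{+}1,\ldots,kp{+}p\}$, its change index $c_{j+1}$ satisfies $kp+1 \le c_{j+1} \le kp+p$; I would then argue that $r_m$'s change index — being the change index of a \emph{right}-child of $\alpha_{j+1}$ and hence at most $n$ but constrained by how a conjugate pair can act on a periodic string — together with the non-increasing path, keeps $c_j$ below $kp+p$. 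The essential point is that when we descend from $r_m$ to $\alpha_j$ by left-children, change indices only decrease, and the very first step already must respect $r_m$'s own structure. Case (c) is similar, using that $\ell_i$ and $\ell_{i+1}$ are consecutive left-children of a common ancestor $a$, and case (f) reduces to (c) with $a$ the root.

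The main obstacle I anticipate is the periodic-string bookkeeping: the change index can ``wrap'' within a period, so I must carefully use the definition of acceptable range — that for a node of period $p$, its change index $c$ satisfies $jp < c \le jp+p$ for the appropriate block $j$ — to rule out $c_j$ landing in a strictly higher block than $\alpha_{j+1}$'s. Concretely, the danger is a configuration where $\alpha_{j+1}$ is periodic but $\alpha_j$ (or some node on the connecting path) carries a large change index coming from a different period-block; I would need to show that the RCL-adjacency (via Remark~\ref{rem:nextRCL}) plus the monotonicity of change indices along descendant paths and along consecutive siblings prevents this. I expect the cleanest route is to first establish a helper fact — that if $\delta$ is the rightmost left-descendant of a node $\epsilon$, then $c_\delta \le c_\epsilon$ — prove it by an easy induction on the length of the left-descendant path (each step down a last left-child can only decrease the change index, by the left/right partition rule), and then apply it together with the period/acceptable-range inequality $c_{j+1} \le kp+p$ and the fact that any child's change index, when the child's parent equals $\alpha_{j+1}$, is bounded appropriately. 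Once the helper fact is in place, each of (b), (c), (e), (f) should close in a line or two.
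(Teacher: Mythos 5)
Your proposal follows essentially the same route as the paper's proof: the same case split (b)(c)(e)(f) of Remark~\ref{rem:nextRCL}, the same monotonicity of change indices along left-/right-descendant paths and across consecutive siblings, and, for the crucial case (b), the same key fact that the change index of $\alpha_{j+1}$'s last right-child $r_m$ must lie in $\alpha_{j+1}$'s acceptable range (hence is at most $kp+p$), which you gesture at via the conjugate-pair/acceptable-range constraint on children of a periodic node. The only cosmetic difference is that you isolate the left-descendant monotonicity as an explicit helper lemma, which the paper uses implicitly, so this is the same argument rather than a genuinely different one.
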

\begin{proof}
Let $x=\alpha_{j}$ and $y=\alpha_{j+1}$.  Following the notation from Figure~\ref{fig:nextRCL}, 
consider the four possible cases (b)(c)(e)(f) from Remark~\ref{rem:nextRCL}.  If $t=1$, the results are immediate. Suppose $t>1$.
\begin{enumerate}
\item[(b)] ~The change index for $r_m$ must be less than or equal to $kp+p$, and because $\alpha_j$ is a left descendant of $r_m$, $c_j$ must be less than or equal to the change index of $r_m$. Thus, $c_j \leq kp+p$. 
\item[(c)]  ~$c_j$ is less than or equal to the change index of $\ell_i$, which is less than the change index of $\ell_{i+1}$, which is less than or equal to $c_{j+1}$. Thus, $c_j < c_{j+1} \leq kp + p$. 
\item[(e)]  ~$\alpha_j$ is a left-descendant of $\alpha_{j+1}$ so clearly $c_j < c_{j+1} \leq kp + p$.
\item[(f)]  ~$c_j$ is less than or equal to the change index of the root, which is less than or equal to $c_{j+1}$. Thus, $c_j < c_{j+1} \leq kp + p$.
\end{enumerate}
\end{proof}

\noindent
If $\alpha_j$ is not an ancestor of  $\alpha_{j-1}$, then from Remark~\ref{rem:nextRCL}, $\alpha_j$ is not the root node and thus has a parent $\beta_1\tt{y'}\beta_2$ for some $\tt{y'} \in \Sigma$.
Recalling the definition of $\firstg$ in Section~\ref{sec:succ-kary} with respect to the underlying cycle-joining tree $\cycletree$, let $\tt{x} = \firstg(\tt{y'}\beta_1\beta_2)$.

\begin{lemma}  \label{lem:suf}
If $\alpha_j$ is not an ancestor of  $\alpha_{j-1}$, then
\begin{enumerate}
    \item if $\tree$ is a \blue{left} concatenation tree then $\alpha_{j-1}$ has suffix $\tt{y'}\beta_2$, and
    \item if $\tree$ is a \red{right} concatenation tree then $\alpha_{j-1}$ has suffix $\tt{x}\beta_2$.
\end{enumerate}
\end{lemma}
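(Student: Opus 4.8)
The plan is to prove Lemma~\ref{lem:suf} by a case analysis on how $\alpha_{j-1}$ and $\alpha_j$ appear consecutively in the RCL traversal, exactly mirroring the structure of the proofs of Lemma~\ref{lem:pre} and Lemma~\ref{lem:leaf}. Since $\alpha_j$ is \emph{not} an ancestor of $\alpha_{j-1}$, Remark~\ref{rem:nextRCL} (applied with $x = \alpha_{j-1}$ and $y = \alpha_j$) tells us that only cases (b), (c), (e), (f) can occur — in each of these $\alpha_j$ is either a descendant of $\alpha_{j-1}$ or they share a common strict ancestor. In every such case $\alpha_{j-1}$ is a \emph{rightmost left-descendant} of some node (the last right-child $r_m$ of $\alpha_j$ in case (b), the node $\ell_i$ in case (c), $\alpha_j$ itself in case (e), the root in case (f)). The key observation is that walking down last left-children never changes any index at or above the parent's change index, and the change indices strictly decrease along such a walk; this is precisely what controls which suffix is preserved.

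First I would reduce to understanding the suffix of $\alpha_j$'s parent $\beta_1\tt{y'}\beta_2$. Writing $\alpha_j = \beta_1\tt{y}\beta_2$ with change index $c_j$, the parent differs from $\alpha_j$ only at index $c_j$, so the parent has suffix $\tt{y'}\beta_2$. Now I would track, in each of the four cases, the suffix of the relevant ``anchor'' node ($r_m$, $\ell_i$, $\alpha_j$, or the root) and then argue that passing from that anchor down to $\alpha_{j-1}$ via last left-children only alters positions strictly to the left of index $c_j$ — since each such step changes a coordinate at an index strictly smaller than the anchor's change index, and all the anchors involved have change index $\ge c_j$ (this is where I would invoke the same inequalities established in the proof of Lemma~\ref{lem:leaf}: $c_{j-1}$ and all intermediate change indices on the downward path are $\le$ the anchor's change index, which is in turn $\ge c_j$). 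Hence $\alpha_{j-1}$ agrees with the anchor from position $c_j$ onward.

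The two parts of the lemma then diverge only in what the anchor's suffix is. For a \emph{left} concatenation tree, $\alpha_j$ is a left-child of its parent (or the change index strictly decreased), so the anchor node's suffix past index $c_j$ equals that of $\alpha_j$'s parent, namely $\tt{y'}\beta_2$; this gives part (1). For a \emph{right} concatenation tree, $\alpha_j$ may be a right-child, so the first coordinate of its conjugate pair inside the underlying chain of $\cycletree$ matters: here the anchor carries the chain's \emph{first} value $\tt{x} = \firstg(\tt{y'}\beta_1\beta_2)$ at the position in question, because in a right concatenation tree the relevant preceding node in RCL order is the one that joined to the top of the chain. I would make this precise by tracing the chain structure: the nodes of a chain $\alpha_1,\dots,\alpha_m$ of $\cycletree$ joined by conjugate pairs $(\tt{x}_i\beta,\tt{x}_{i+1}\beta)$ all share the change index in $\tree$ (for a right concatenation tree, all but possibly $\alpha_1$ are right-children), so in RCL order they appear consecutively and the one immediately preceding $\alpha_j$'s subtree-entry exits with the label carrying $\tt{x}_1 = \firstg(\cdot)$ at that coordinate; combined with the suffix-preservation argument above this yields part (2).

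The main obstacle I expect is case (f) together with the right-concatenation bookkeeping: when $\alpha_{j-1}$ and $\alpha_j$ sit in different subtrees of the root and the relevant chain straddles the root's children, one must carefully check that the ``$\firstg$'' value is the one actually printed by the RCL traversal just before entering $\alpha_j$'s branch, rather than some other chain element — i.e.\ that the ordering of chain members as right-children (increasing change index, but all equal here) together with the RCL rule (right-children first, from first to last) places $\alpha_1$ of the chain exactly where the argument needs it. Handling the degenerate sub-cases ($\alpha_j$ itself being the top of its chain, so $\tt{x} = \tt{y'}$, and chains of length $m=2$ where $\fup = \fdown$) should then be routine. The rest is the same index-chasing already exhibited in Lemmas~\ref{lem:pre} and~\ref{lem:leaf}.
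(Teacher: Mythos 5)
There is a genuine gap, and it comes right at the start: the case analysis is keyed to the wrong cases of Remark~\ref{rem:nextRCL}. With $x=\alpha_{j-1}$ and $y=\alpha_j$, the hypothesis ``$\alpha_j$ is not an ancestor of $\alpha_{j-1}$'' rules out exactly the cases in which $x$ is a descendant of $y$, namely (b) and (e); the cases that survive are (a), (c), (d), (f), which is what the paper works through. You instead keep (b), (c), (e), (f) — i.e.\ you retain precisely the two configurations excluded by the hypothesis (these are the ones handled later, in Lemma~\ref{lem:suf_period}, where $\alpha_j$ \emph{is} an ancestor of $\alpha_{j-1}$) and you drop the two configurations that actually must be treated: case (a), where $\alpha_{j-1}$ is an ancestor of $\alpha_j$ and $\alpha_j$ is the leftmost right-descendant of $\alpha_{j-1}$'s first left-child $\ell_1$, and case (d), where $\alpha_{j-1}$ is the root and $\alpha_j$ its leftmost right-descendant. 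It looks like you carried over the case list (b)(c)(e)(f) from Lemmas~\ref{lem:pre} and~\ref{lem:leaf}, where the hypothesis is about $\alpha_{j+1}$ and so excludes (a) and (d); here the roles are reversed.

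This is not merely a labelling slip, because your unifying mechanism — ``$\alpha_{j-1}$ is a rightmost left-descendant of an anchor with change index $\geq c_j$, and descending last left-children never touches positions $\geq c_j$'' — fails in the omitted cases and is not even accurate in case (c) (there the anchor $\ell_i$ has change index strictly \emph{less} than $c_j$; the suffix is first pushed up to the common ancestor $a$ and then down). In cases (a) and (d) the needed argument is different in kind: one climbs from $\alpha_j$'s parent $\beta_1\tt{y}'\beta_2$ up the path of first right-children to $\ell_1$ (all intermediate change indices are strictly less than $c_j$, so the suffix at positions $\geq c_j$ is preserved) and then one more step to $\alpha_{j-1}$; and for the right concatenation tree this is exactly where the chain bookkeeping bites, e.g.\ the subcase in which $\ell_1\neq\alpha_j$ has change index equal to $c_j$, forcing $\alpha_{j-1}$ itself to be $\beta_1\tt{x}\beta_2$ with $\tt{x}=\firstg(\tt{y}'\beta_1\beta_2)$. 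Your discussion of chains and $\firstg$ is in the right spirit for part (2), but it is attached to the wrong tree configurations, so the two cases that carry the real content of the lemma are never argued. (Minor: the inequalities you want to borrow for this direction are those of Lemma~\ref{lem:leaf2}, not Lemma~\ref{lem:leaf}.)
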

\begin{proof}
Let $x=\alpha_{j-1}$ and $y=\alpha_{j}$. Following notation from Figure~\ref{fig:nextRCL}, consider the four possible cases (a)(c)(d)(f) from Remark~\ref{rem:nextRCL}.
If $t=1$, the results are immediate. Suppose $t>1$.  
%
%
%
Suppose $\tree$ is a \blue{\emph{left}} concatenation tree.

\begin{enumerate}
\item[(a)] If $\ell_1 = y$, the result is immediate. Suppose $\ell_1 \neq y$. From the definition of $\tt{y'}$, $\ell_1$ has suffix $\tt{y'}\beta_2$ and change index strictly less than $c_j$. Since $\ell_1$  differs from its parent $x$ only at its change index, $x$ must also have  suffix $\tt{y'}\beta_2$.  
%
%
\item[(c)]  If $\ell_{i+1} = y$, then it is already established that its parent $a$ has suffix $\tt{y'}\beta_2$.  Otherwise, $\ell_{i+1}$ has suffix $\tt{y'}\beta_2$ and change index less than $c_j$, which means that $a$ again has suffix $\tt{y'}\beta_2$. Since the change index of $\ell_i$ is less than the change index of $\ell_{i+1}$, clearly $x$ also has suffix $\tt{y'}\beta_2$.
\item[(d)] Follows since $\tree$ is a left concatenation tree.
%
%
\item[(f)] Let $\alpha_r$ be the root of $\bigtree{}$.
Clearly, $\alpha_r$ has suffix $\tt{y'}\beta_2$ and $c_r < c_j$. Thus,
$x$ also will have suffix $\tt{y'}\beta_2$.
\end{enumerate}
Suppose $\tree$ is a \red{\emph{right}} concatenation tree. Let $\tt{x} =  \firstg(\tt{y'}\beta_1  \beta_2)$.  This implies that all nodes on the path from $\beta_1\tt{x}\beta_2$ to $y=\alpha_j$ have change index $c_j$ and the change index of $\beta_1\tt{x}\beta_2$ is not equal to $c_j$.
\begin{enumerate}
\item[(a)]
If $\ell_1 = y$, then the change index of $\ell_1$ is strictly less than the change index of $x$ and the result follows as $\tt{x} = \tt{y'}$.  
Suppose $\ell_1 \neq y$. If the change index of $\ell_1$ is strictly less than $c_j$, then by the definition of $\tt{x}$, $\ell_1$ has suffix $\tt{x}\beta_2$. Thus, clearly $x$ also has suffix $\tt{x}\beta_2$.
Otherwise, the change index of $\ell_1$ must be equal to $c_j$, and since it is a left-child of $x$, the change index of $x$ is not equal to $c_j$.  Thus, by the definition of $\tt{x}$, $x$ will be precisely $\beta_1 \tt{x} \beta_2$.
%
%
\item[(c)] Recall this covers two cases where the children of $a$ can be either be both left-children or both right-children. In either case, the change index of $a$ can not be the same as the change index for $\ell{i+1}$.
Thus, following the same argument from (a), the node $a$ will have suffix $\tt{x}\beta_2$. Since the change index of $\ell_i$ is less than the change index of $\ell_{i+1}$, clearly $x$ also has suffix $\tt{x}\beta_2$.
\item[(d)] Follows since $\tree$ is a right concatenation tree.
%
%
\item[(f)] Let $\alpha_r$ be the root of $\tree$.
Clearly, $\alpha_r$ has suffix $\tt{x}\beta_2$ and $c_r \leq c_j$. 
Since all left descendants of the root will have change index strictly
less than $c_r$, it follows that $x$ also will have suffix $\tt{x}\beta_2$.
\end{enumerate}
\end{proof}

\begin{lemma} \label{lem:leaf2}
If $\alpha_j$ is not an ancestor of $\alpha_{j-1}$, and
$\alpha_{j-1}$ has period $p<n$ with acceptable range $kp+1, \ldots , kp + p$, then $c_j > kp$.   
\end{lemma}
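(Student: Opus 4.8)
\textbf{Proof proposal for Lemma~\ref{lem:leaf2}.}

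The plan is to mirror the case analysis used in Lemma~\ref{lem:leaf}, but now running on the ``predecessor side'' of the RCL traversal, exactly as Lemma~\ref{lem:suf} does relative to Lemma~\ref{lem:pre}. Set $x = \alpha_{j-1}$ and $y = \alpha_j$, and appeal to Remark~\ref{rem:nextRCL}: since $\alpha_j$ is \emph{not} an ancestor of $\alpha_{j-1}$, the relevant cases are (a), (c), (d), (f) (the four cases where $x$ is not a descendant of $y$ via $y$ being $x$'s ancestor). The trivial sub-case $t=1$ is handled first, and then $t>1$ is assumed throughout. The goal in every case is the single inequality $c_j > kp$, i.e. the change index of $y = \alpha_{j-1}$'s periodic neighbor lies strictly above the previous period-block boundary $kp$.

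The key observation driving each case is that $\alpha_{j-1}$ being periodic with acceptable range $\{kp+1,\dots,kp+p\}$ forces its own change index $c_{j-1}$ to satisfy $c_{j-1} \geq kp+1$, hence $c_{j-1} > kp$. Then I would trace, in each structural case, how $c_{j-1}$ relates to $c_j$: in case (d), $x$ is the rightmost left-descendant of the root and $y$ is its leftmost right-descendant, so $c_j \geq c_r$ where $c_r$ is the root's change index, and $c_r$ exceeds the change indices of all left-descendants — including $x$ — giving $c_j \geq c_r > c_{j-1} > kp$ (with attention to whether equality is possible at the root for left vs.\ right concatenation trees, as in Lemma~\ref{lem:suf}(d)); in case (a), $y$ is the leftmost right-descendant of $x$'s first left-child $\ell_1$, so $c_j$ is at least the change index of $\ell_1$, which is at least $c_{j-1}$ (or strictly greater, depending on the left/right convention) and thus $> kp$; case (c) chains the inequality $c_j \geq (\text{change index of } r\text{-side node}) \geq (\text{change index of }\ell_i) > c_{j-1} > kp$ through the common ancestor; case (f) passes the inequality up to the root and back down, with every node on the path having change index at least $c_{j-1}$. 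In each instance I would cite the ordering rule for children (children sorted by increasing change index, and the left/right placement of a child sharing its parent's change index) to justify the strictness where needed.

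The main obstacle I expect is \emph{the boundary/strictness bookkeeping}: the claim is a strict inequality $c_j > kp$, and $kp$ is exactly the floor of the acceptable range of $\alpha_{j-1}$, so I need $c_{j-1} \geq kp+1$ to be genuinely available and then never lose strictness when propagating to $c_j$. The delicate points are (i) in case (d), whether a left-concatenation tree vs.\ a right-concatenation tree allows the root's change index to equal $c_{j-1}$ — this is precisely the distinction that Lemma~\ref{lem:suf}(d) sidesteps by invoking the definition of the tree type, and I would do the same here — and (ii) in case (a), whether $\ell_1 = y$ or $\ell_1 \neq y$, since when $\ell_1 = y$ the change index of $y$ is compared directly with $c_{j-1}$ and the left/right convention decides strict-versus-nonstrict. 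Handling these exactly as the companion lemmas do (splitting on the tree type and on whether the relevant child coincides with $y$) should close every gap; the rest is the routine transitive chaining of change-index inequalities already rehearsed in Lemmas~\ref{lem:leaf} and~\ref{lem:suf}.
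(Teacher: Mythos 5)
There is a genuine gap, and it sits exactly where the lemma needs its hypothesis about periodicity: your case (a). There, $x=\alpha_{j-1}$ is an ancestor of $y=\alpha_j$, and $y$ is the leftmost right-descendant of $x$'s \emph{first left-child} $\ell_1$. You claim that the change index of $\ell_1$ ``is at least $c_{j-1}$ (or strictly greater, depending on the left/right convention)'', but that inequality is backwards: $\ell_1$ is a left-child of $x$, so its change index is at most $c_{j-1}$ (strictly less, except possibly for equality in a left concatenation tree). Consequently your global scheme --- establish $c_{j-1}>kp$ from the acceptable range and then propagate monotonically up to $c_j$ --- breaks precisely in this case, because the path from $x$ to $y$ first descends through a left-child, where change indices may drop below $c_{j-1}$. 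The correct argument, which is what the paper uses, is different in kind: since $\alpha_{j-1}$ is periodic, the definition of the concatenation tree forces the change index of \emph{every child} of $\alpha_{j-1}$, in particular $\ell_1$, to lie in the acceptable range $\{kp+1,\ldots,kp+p\}$, hence to exceed $kp$; then $c_j\geq$ (change index of $\ell_1$) $>kp$ because $y$ is a right-descendant of $\ell_1$. Your proposal never invokes this acceptable-range constraint on children, and without it case (a) does not close.

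The remaining cases are essentially fine, modulo bookkeeping: in cases (c), (d), and (f) the chain $c_j\geq c_{j-1}>kp$ does go through (note that your worry about strictness is a non-issue, since $c_{j-1}>kp$ is already strict by the definition of the acceptable range, so non-strict propagation to $c_j$ suffices, matching Lemma~\ref{lem:leaf2}'s statement). Also note you have swapped the contents of cases (d) and (f) of Remark~\ref{rem:nextRCL}: in case (d), $x$ \emph{is} the root and $y$ is its leftmost right-descendant, while the configuration you describe under (d) (rightmost left-descendant of the root versus leftmost right-descendant of the root) is case (f). That mix-up is harmless to the inequalities, but the case (a) error is not, and fixing it requires the observation about children of periodic nodes rather than any refinement of your monotonicity chain.
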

\begin{proof}
Let $x=\alpha_{j-1}$ and $y=\alpha_{j}$. Following notation from Figure~\ref{fig:nextRCL}, consider the four possible cases (a)(c)(d)(f) from Remark~\ref{rem:nextRCL}.
If $t=1$, the results are immediate. Suppose $t>1$.  
\begin{enumerate}
\item[(a)]~By the acceptable range, the change index for $\ell_1$ must be greater than $kp$. Because $\alpha_j$ is a right descendant of $\ell_1$, $c_j$ must be greater than or equal to the change index of $\ell_1$. Thus, $c_j > kp$.
\item[(c)]~$c_{j-1}$ is less than or equal to the change index of $\ell_i$, which is less than the change index of $\ell_{i+1}$, which is less than or equal to $c_j$. Thus, $kp < c_{j-1} < c_j$. 
\item[(d)]~$\alpha_j$ is a right-descendant of $\alpha_{j-1}$ so clearly $kp < c_{j-1} < c_{j}$.
\item[(f)]~$c_{j-1}$ is less than or equal to the change index of the root, which is less than $c_j$. Thus, $kp < c_{j-1} < c_j$.
\end{enumerate}
\end{proof}


\begin{lemma} \label{lem:pre_period}
If $\alpha_j$ is periodic with period $p$ and acceptable range $kp+1, \ldots , kp + p$, then $\ap(\alpha_j)^{k}$ is a prefix of $\alpha_{j+1}$. 
\end{lemma}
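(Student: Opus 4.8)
The plan is to prove Lemma~\ref{lem:pre_period} by analyzing which of the cases from Remark~\ref{rem:nextRCL} can apply when $\alpha_j$ is periodic, and then tracing how much of the prefix of $\alpha_j$ is forced to agree with $\alpha_{j+1}$. Since $\alpha_j$ is periodic with period $p$ and acceptable range $\{kp{+}1,\ldots,kp{+}p\}$, its change index $c_j$ satisfies $kp < c_j \le kp+p$. Write $\alpha_j = (\ap(\alpha_j))^{n/p}$. First I would dispose of the case where $\alpha_j$ \emph{is} an ancestor of $\alpha_{j+1}$ (case (a) or (d) of Remark~\ref{rem:nextRCL}): then $\alpha_{j+1}$ is the leftmost right-descendant of $\alpha_j$'s first left-child $\ell_1$. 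By the partition rule, $\ell_1$ has change index strictly less than $c_j$, hence $\le kp$ (as $c_j \le kp+p$ and there is no index strictly between $kp$ and the acceptable range's lower end that... ), so $\ell_1$ agrees with $\alpha_j$ on all positions $kp{+}1,\ldots,n$, in particular on the whole suffix of length $n - kp \ge p$; combined with periodicity this forces $\ell_1$, and then every node on the path of first-right-children down to $\alpha_{j+1}$ (each of which only changes a position $\ge$ the previous change index), to retain the prefix $(\ap(\alpha_j))^{k}$ — actually more care is needed, see below.

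For the case where $\alpha_j$ is \emph{not} an ancestor of $\alpha_{j+1}$, I would invoke Lemma~\ref{lem:pre}: $\alpha_{j+1}$ has prefix $\beta_1 = \tt{a}_1\cdots \tt{a}_{c_j - 1}$. Since $c_j > kp$, the prefix $\beta_1$ has length $c_j - 1 \ge kp$, so it contains $(\ap(\alpha_j))^{k}$ as a prefix (because the first $kp$ symbols of $\alpha_j$, being periodic with period $p$, are exactly $(\ap(\alpha_j))^k$). Hence $\alpha_{j+1}$ has prefix $(\ap(\alpha_j))^k$, as desired. This handles cases (b)(c)(e)(f) uniformly and is essentially a corollary of the already-proved Lemmas~\ref{lem:pre} and the acceptable-range bookkeeping — the genuinely new work is only in the ancestor case.

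So the main obstacle is the ancestor case (cases (a) and (d)), where Lemma~\ref{lem:pre} does not apply. Here I would argue directly about the leftmost-right-descendant path. Let $\alpha_j = \beta_1\tt{y}\beta_2$ with $|\beta_1| = c_j - 1 \ge kp$. The first left-child $\ell_1$ of $\alpha_j$ has change index $c' < c_j$; by the acceptable range of $\alpha_j$, there is actually no valid change index in $\{kp{+}1,\ldots,c_j{-}1\}$ available to a child unless... — here I need to check the definition of children in Section~\ref{sec:concat} more carefully, but the key point is that $\ell_1$ differs from $\alpha_j$ only at position $c'$, and I want $c' \le kp$. Whether $c' \le kp$ is forced, or whether I instead need only that $\ell_1$ agrees with $\alpha_j$ on positions $1,\ldots,kp$ (which would follow if $c' > kp$ is impossible for a left-child, or alternatively by re-deriving the structure), is the crux. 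Granting $\ell_1$ has prefix $(\ap(\alpha_j))^k$ (its change index is either $\le kp$, or if in the acceptable range it's a "same change index" child which by the left/right convention would not be a left-child of $\alpha_j$ unless $\tree = \tree_1$, a subcase I'd treat separately), I then push down the leftmost-right-descendant chain: each successive node changes only at an index $\ge$ its parent's change index $\ge c'$, but I need these change indices to stay $> kp$ to protect the prefix. I'd establish this by noting that all these nodes lie in the subtree of $\ell_1$, whose change index $c'$ governs: right-descendants of $\ell_1$ have change index $\ge c'$, and... this requires $c' > kp$ which contradicts what I wanted above — so the correct split is: either $c' > kp$ (then the prefix $(\ap(\alpha_j))^k$ of length $kp$ is untouched all the way down) or $c' \le kp$ and I argue via periodicity that changing position $c'$ in a periodic string still leaves a length-$kp$ prefix that, after the subsequent changes at positions $\ge c'$... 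I would need to verify this doesn't destroy periodicity of that prefix — this is the delicate combinatorial check and the place I expect to spend the most effort, likely mirroring the structure of the bad-BOT counterexample discussion to confirm the acceptable-range definition is exactly what rescues the argument.
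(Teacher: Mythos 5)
Your handling of the non-ancestor case is correct and is exactly the paper's argument: Lemma~\ref{lem:pre} gives the prefix $\beta_1$ of length $c_j-1\geq kp$, and $kp < c_j$ plus periodicity gives $\ap(\alpha_j)^k$ as a prefix of $\alpha_{j+1}$. The gap is the ancestor case (cases (a) and (d) of Remark~\ref{rem:nextRCL}), which you explicitly leave unresolved and where you chase the wrong inequality. The missing idea is that it is built into the definition of a concatenation tree that the change index of \emph{every} child of a node lies in that node's acceptable range (the conjugate-pair position $\tt{x}$ is rewritten so that it falls in the acceptable range, and the child inherits that index). Since $\alpha_j$ is periodic with acceptable range $\{kp{+}1,\ldots,kp{+}p\}$, its first left-child $\ell_1$ (case (a)) therefore has change index strictly greater than $kp$ — not ``$\leq kp$'' as you conjectured; that inequality is impossible, and had it held it would destroy, not protect, the prefix. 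From there the argument closes immediately: $\alpha_{j+1}$ is a right-descendant of $\ell_1$, so the change indices along that path are nondecreasing and all exceed $kp$; every node on the path differs from its parent only at its own change index, so positions $1,\ldots,kp$ of $\alpha_j$ are untouched and $\ap(\alpha_j)^k$ is a prefix of $\alpha_{j+1}$. Case (d) is even simpler (and you misdescribe it as case (a)): there $\alpha_j$ is the root and $\alpha_{j+1}$ is its leftmost right-descendant, so all changes occur at indices at least $c_j > kp$.

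In short, the ``delicate combinatorial check'' you anticipate for the branch $c'\leq kp$ is vacuous — that branch cannot occur precisely because of the acceptable-range constraint on children's change indices, which is the sole purpose of that definition (as the bad-BOT example in Section~\ref{sec:concat} illustrates). Without invoking that constraint, your proposal does not establish the ancestor case, so as written it is not a complete proof; with it, your outline collapses to the paper's proof.
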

\begin{proof}
If $\alpha_{j}$ is not an ancestor of $\alpha_{j+1}$, the inequality $kp < c_j$ and Lemma~\ref{lem:pre} together imply $\ap(\alpha_j)^k$ is a prefix of $\alpha_{j+1}$.
It remains to consider cases (a) and (d) from Remark~\ref{rem:nextRCL} where $x = \alpha_{j}$ is an ancestor of $y = \alpha_{j+1}$.  For case (a), $y$ is the leftmost right-descendent of $x$'s first left-child $\ell_1$.  Since $x$ is periodic, the change index of $\ell_1$ is in $\alpha_j$'s acceptable range; it is greater than $kp$. $y$ is a right descendant of $\ell_1$ and thus $c_{j+1} > kp$, which means $y$ differs from $\ell_1$ only in indices greater than $kp$.
For (d) clearly $y$ differs only in indices greater than or equal to $c_j$, which means $c_{j+1} > kp$.
Thus, for each case, $\ap(\alpha_j)^{k}$ is a prefix of $\alpha_{j+1}$.  
\end{proof}

\begin{lemma} \label{lem:pre_period_strong}
If $\alpha_j$ is periodic with period $p$ and acceptable range $kp+1, \ldots,  kp + p$,  then $\ap(\alpha_j)^{k+1}$ is a prefix of $\ap(\alpha_j, \ldots, \alpha_t,  \alpha_1, \ldots, \alpha_{j-1})$, which is a rotation of $\RCL(\bigtree{}~)$, considered cyclically.
\end{lemma}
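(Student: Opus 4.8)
The plan is to bootstrap from Lemma~\ref{lem:pre_period}, which already gives that $\ap(\alpha_j)^k$ is a prefix of $\alpha_{j+1}$, to the stronger claim that one more copy of $\ap(\alpha_j)$ appears — but now spread across the concatenation $\ap(\alpha_j, \alpha_{j+1}, \ldots, \alpha_{j-1})$ rather than inside $\alpha_{j+1}$ alone. First I would note that $\ap(\alpha_j)$ itself has length $p$ and is the first block written for $\alpha_j$, so the rotation $\ap(\alpha_j, \alpha_{j+1}, \ldots, \alpha_{j-1})$ literally starts with $\ap(\alpha_j) = \tt{a}_1\cdots\tt{a}_p$. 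It therefore suffices to show that the string obtained by deleting this leading block, namely $\ap(\alpha_{j+1}, \ldots, \alpha_t, \alpha_1, \ldots, \alpha_{j-1})$, has $\ap(\alpha_j)^{k}$ as a prefix. If $\alpha_{j+1}$ is aperiodic, then $\ap(\alpha_{j+1}) = \alpha_{j+1}$ and Lemma~\ref{lem:pre_period} finishes it immediately, since $|\ap(\alpha_j)^k| = kp \le (k{+}1)p - p < n$ and so $\ap(\alpha_j)^k$ sits entirely inside $\alpha_{j+1}$.

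The delicate case is when $\alpha_{j+1}$ is itself periodic, say with period $q$ and acceptable range $k'q+1, \ldots, k'q+q$, because then only $\ap(\alpha_{j+1}) = \tt{b}_1\cdots\tt{b}_q$ is written out for $\alpha_{j+1}$, which may be shorter than the $kp$ symbols of $\ap(\alpha_j)^k$ we need to find. Here I would argue as follows. By Lemma~\ref{lem:pre_period}, $\ap(\alpha_j)^k$ is a prefix of $\alpha_{j+1} = (\tt{b}_1\cdots\tt{b}_q)^{n/q}$; since $\ap(\alpha_j)$ is aperiodic of length $p$, this forces $p$ to be a multiple of $q$ and $\ap(\alpha_j) = (\tt{b}_1\cdots\tt{b}_q)^{p/q}$, i.e. $\ap(\alpha_j)$ and $\ap(\alpha_{j+1})$ are powers of a common primitive string — in fact $\ap(\alpha_{j+1})$ must already equal $\ap(\alpha_j)$, because both are aperiodic. (If $p/q > 1$ then $\ap(\alpha_j)$ would be periodic, a contradiction; so $p = q$ and $\ap(\alpha_j) = \ap(\alpha_{j+1})$.) Now apply Lemma~\ref{lem:pre_period} to $\alpha_{j+1}$ itself: it says $\ap(\alpha_{j+1})^{k'}$ is a prefix of $\alpha_{j+2}$. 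Concatenating, the output so far reads $\ap(\alpha_j)\,\ap(\alpha_{j+1})\,(\text{prefix of }\alpha_{j+2})$, and since $\ap(\alpha_{j+1}) = \ap(\alpha_j)$ we get a prefix $\ap(\alpha_j)^2$ followed by a prefix of $\alpha_{j+2}$. I would then need to know $k' \ge k-1$ so that $\ap(\alpha_j)^{k+1}$ is reached; this should follow from the change-index bookkeeping — the change index $c_{j+1}$ of $\alpha_{j+1}$ satisfies $k'q < c_{j+1}$, while $c_{j+1} > kp$ was shown inside the proof of Lemma~\ref{lem:pre_period}, giving $k' q > kp - q$, hence $k' \ge k$ after dividing by $q = p$. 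With $k' \ge k$ and $\ap(\alpha_{j+1}) = \ap(\alpha_j)$, the copy of $\ap(\alpha_j)^k$ we want sits inside $\ap(\alpha_{j+1})\cdot(\text{prefix of }\alpha_{j+2})$ exactly as in Lemma~\ref{lem:pre_period}, completing the $k{+}1$ total copies.

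The main obstacle I anticipate is handling the periodic-successor case cleanly: one must rule out $\ap(\alpha_{j+1})$ being a nontrivial power of $\ap(\alpha_j)$ (solved by primitivity) and, more subtly, must confirm that the change index of $\alpha_{j+1}$ is large enough that deleting the written block $\ap(\alpha_{j+1})$ still leaves the required prefix of $\alpha_{j+2}$ — i.e. that $\alpha_{j+1}$ did not ``eat into'' the copies of $\ap(\alpha_j)$ we are counting. A subtlety worth flagging is whether a chain of consecutive periodic nodes with equal aperiodic prefix can occur, in which case the argument iterates; since each step the change index strictly increases (Lemma~\ref{lem:leaf}-style reasoning, or directly from the RCL order when $\alpha_{j+1}$ is not an ancestor), the chain terminates before overflowing length $n$, and one can conclude by a short induction on the number of such consecutive periodic nodes. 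I would also double-check that cases (a) and (d) of Remark~\ref{rem:nextRCL}, where $\alpha_j$ is an ancestor of $\alpha_{j+1}$, are already subsumed by the argument in Lemma~\ref{lem:pre_period} and need no separate treatment here.
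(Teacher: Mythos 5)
Your reduction (strip the leading block $\ap(\alpha_j)$, and note the claim is immediate when $\alpha_{j+1}$ is aperiodic) matches the paper's first step, but the way you handle a periodic successor contains a genuine gap. The pivotal assertion --- that $\ap(\alpha_j)^k$ being a prefix of the periodic string $\alpha_{j+1}$ ``forces $p$ to be a multiple of $q$'' and hence $\ap(\alpha_{j+1}) = \ap(\alpha_j)$ --- does not follow. Knowing that $u^k$ ($u$ primitive of length $p$) is a prefix of a string of minimal period $q$ forces no relation between $p$ and $q$ unless the overlap is long enough (a Fine--Wilf-type hypothesis such as $kp \ge p+q-\gcd(p,q)$, which is unavailable here; e.g.\ $k$ can equal $1$). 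Concretely, $u=01$ is a prefix of $(011)^{4}$, so a periodic successor can have an aperiodic prefix unrelated to $\ap(\alpha_j)$, and nothing in the definition of concatenation trees rules this out. Your repair attempts inherit the problem: the inequality you invoke, $c_{j+1} > kp$, is established in the proof of Lemma~\ref{lem:pre_period} only in the ancestor cases (a)/(d) of Remark~\ref{rem:nextRCL}, not in general; and the claim that ``the change index strictly increases'' along a run of consecutive periodic nodes (your termination argument) fails in cases such as (b), so the iteration is not known to stop.

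What is actually needed is weaker than your equality claim, and this is how the paper proceeds: it inducts on the number of nodes $t$, chooses $\alpha_j$ to be a \emph{leaf}, and combines $kp < c_j$ (the acceptable range of $\alpha_j$) with $c_j \le k'p'+p'$ from Lemma~\ref{lem:leaf} --- valid precisely because a leaf is not an ancestor of $\alpha_{j+1}$ --- to conclude only that $\ap(\alpha_j)^k$ is a prefix of $\ap(\alpha_{j+1})^{k'+1}$; the inductive hypothesis applied to the tree with the leaf removed then supplies $\ap(\alpha_{j+1})^{k'+1}$ as a prefix of the remaining concatenation, with no need for the two aperiodic prefixes to coincide. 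Your proposal also lacks the second half of that induction: since the lemma concerns every periodic node, one must still transfer the statement to the non-leaf periodic nodes of the original tree (the paper does this explicitly for $\alpha_{j-1}$, where the ancestor cases (a)/(d) require separate treatment), a step your argument does not address.
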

\begin{proof}  Note that $|\ap(\alpha_j)^{k+1}| \leq n$.
The proof is by induction on the number of nodes $t$.
If $t=1$, the result is trivial.
Suppose the claim holds for any tree with less than $t > 1$ nodes. Let $\tree$ have $t$ nodes and let $\alpha_j$ be a leaf node of $\tree$.  If there are no periodic nodes, we are done.  Otherwise, we first consider $\alpha_j$, then all other periodic nodes in $\tree$.

Suppose $\alpha_j$ is periodic
with period $p$ and acceptable range $kp + 1, \ldots, kp + p$. From Lemma~\ref{lem:pre_period}, $\ap(\alpha_j)^k$ is a prefix of $\alpha_{j+1}$. If $\alpha_{j+1}$ is aperiodic, then we are done. 
Suppose, then, that $\alpha_{j+1}$ is periodic with period $p'$ and acceptable range $k'p' + 1, \ldots, k'p' + p'$. Let $\bigtree{}'$ be the tree resulting from $\bigtree{}$ when $\alpha_j$ is removed. It follows from (i) that $kp < c_j \leq k'p' + p'$, which implies $\ap(\alpha_j)^k$ is a prefix of $\ap(\alpha_{j+1})^{k'+1}$. Additionally, since $\tree'$ has less than $t$ nodes and $\alpha_{j+1}$ is periodic, $\ap(\alpha_{j+1})^{k'+1}$ is a prefix of $\ap(\alpha_{j+1}, \ldots, \alpha_t, \alpha_1, \ldots, \alpha_{j-1})$ by our inductive assumption. Therefore, $\ap(\alpha_j)^{k+1}$ is a prefix of $\ap(\alpha_j,\alpha_{j+1}, \ldots \alpha_t, \alpha_1, \ldots, \alpha_{j-1})$.  

Now consider $\alpha_{j-1}$.  If it is aperiodic, then by induction, the claim clearly holds for all periodic nodes in $\bigtree{}'$.  Thus, assume $\alpha_{j-1}$ is periodic.
By showing that $\ap(\alpha_{j-1},\alpha_j), \ldots, \alpha_t, \alpha_1, \ldots, \alpha_{j-2})$ has the desired prefix, then repeating the same arguments will prove the claim holds for every other periodic node in $\tree'$.
Let $\alpha_{j-1}$ have period $p''$ and acceptable range $k''p'' + 1, \ldots, k''p'' + p''$. 
If $\alpha_j$ is aperiodic, Lemma~\ref{lem:pre_period} implies that $\ap(\alpha_{j-1})^{k''}$ is a prefix of $\alpha_j = \ap(\alpha_j)$ and thus the claim holds for $\alpha_{j-1}$. 
If $\alpha_j$ is periodic with period $p$ and acceptable range $kp + 1, \ldots, kp + p$, we already demonstrated that 
$\ap(\alpha_j)^{k+1}$ is a prefix of $\ap(\alpha_j, \ldots, \alpha_t, \alpha_1, \ldots, alpha_{j-1})$.
From Lemma~\ref{lem:pre_period}, $\ap(\alpha_{j-1})^{k''}$ is a prefix of $\alpha_j$.  Note that (i) and its proof  handles cases (b)(c)(e)(f) from Remark~\ref{rem:nextRCL} implying that $c_{j-1} < kp+p$ for these cases.  Since $\alpha_{j-1}$ is not necessarily a leaf, we must also consider (a) and (d). In both cases, clearly $k''p'' < c_j$.  Either way, $k''p''< kp+p$,  which means
$\ap(\alpha_{j-1})^{k''}$ is a prefix of $\ap(\alpha_j)^{k+1}$.
Thus, $\ap(\alpha_{j-1})^{k''+1}$ is a prefix of $\ap(\alpha_{j-1}, \alpha_j, \ldots, \alpha_t, \alpha_1, \ldots, \alpha_{j-2})$.
\end{proof}

\begin{lemma} \label{lem:suf_period}
If $\alpha_j$ is periodic with period $p$ and acceptable range $kp+1, \ldots , kp + p$, then $\ap(\alpha_j)^{n/p-k-1}$ is a suffix of $\alpha_{j-1}$.   
\end{lemma}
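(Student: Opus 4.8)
The plan is to show that $\alpha_{j-1}$ must agree with $\alpha_j$ on its last $(n/p-k-1)p$ symbols. Because $\alpha_j$ is periodic with period $p$ and $(n/p-k-1)p$ is a multiple of $p$, the length-$(n/p-k-1)p$ suffix of $\alpha_j$ is exactly $\ap(\alpha_j)^{n/p-k-1}$; hence such agreement proves the lemma. Note that the acceptable range being $\{kp+1,\dots,kp+p\}$ forces $0\le k<n/p$, so $(k+1)p\le n$ and the positions in question are precisely $(k+1)p+1,\dots,n$. I will split the argument on whether $\alpha_j$ is an ancestor of $\alpha_{j-1}$; applying Remark~\ref{rem:nextRCL} to the consecutive pair $x=\alpha_{j-1}$, $y=\alpha_j$, this separates cases (b) and (e) (where $\alpha_j$ is an ancestor of $\alpha_{j-1}$) from cases (a), (c), (d), (f). The case $t=1$ is immediate.

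In cases (a), (c), (d), (f) I will simply invoke Lemma~\ref{lem:suf}: it gives that $\alpha_{j-1}$ has suffix $\tt{y'}\beta_2$ if $\tree$ is a left concatenation tree, and suffix $\tt{x}\beta_2$ if $\tree$ is a right concatenation tree, where $\beta_2=\tt{a}_{c_j+1}\cdots\tt{a}_n$ is the length-$(n-c_j)$ suffix of $\alpha_j$. In either situation $\alpha_{j-1}$ and $\alpha_j$ agree on positions $c_j+1,\dots,n$, and since $c_j\le kp+p=(k+1)p$ this covers all positions $>(k+1)p$, which is what we need.

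The remaining cases, where $\alpha_j$ is an ancestor of $\alpha_{j-1}$, form the heart of the proof. Remark~\ref{rem:nextRCL} supplies a downward path $\alpha_j=\delta_0,\delta_1,\dots,\delta_s=\alpha_{j-1}$: $\delta_1$ is the last right-child of $\alpha_j$ in case (b), and is the last left-child of the root $\alpha_j$ in case (e); every later $\delta_{i+1}$ is the last left-child of $\delta_i$. Since each node on the path differs from its parent only at its own change index (write $c(\cdot)$ for this index), $\alpha_{j-1}$ agrees with $\alpha_j$ at every position outside $\{c(\delta_1),\dots,c(\delta_s)\}$, so it suffices to bound every $c(\delta_i)$ by $(k+1)p$. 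The key observation is that, because $\alpha_j$ is periodic, the change index of \emph{any} child of $\alpha_j$ must lie in $\alpha_j$'s acceptable range $\{kp+1,\dots,(k+1)p\}$ --- this is precisely how child labels and their change indices are defined when the parent is periodic --- so $c(\delta_1)\le(k+1)p$. Since a left-child never has a larger change index than its parent, induction along the path yields $c(\delta_i)\le(k+1)p$ for all $i$, so again $\alpha_{j-1}$ and $\alpha_j$ agree on every position $>(k+1)p$.

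Combining the cases, $\alpha_{j-1}$ shares its last $(n/p-k-1)p$ symbols with $\alpha_j$, and by periodicity those symbols spell $\ap(\alpha_j)^{n/p-k-1}$, proving the lemma. I expect the only genuine obstacle to be case (b): a priori the first step, to the last right-child of $\alpha_j$, could disturb a symbol arbitrarily far to the right, and what rescues us is exactly that a periodic parent confines its children's change indices to the acceptable range, after which monotonicity of change indices down a chain of left-children finishes the job. This lemma is the mirror image of Lemma~\ref{lem:pre_period} (with Lemma~\ref{lem:suf} playing the role that Lemma~\ref{lem:pre} plays there), so I would pattern the write-up on that proof.
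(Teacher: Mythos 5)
Your proposal is correct and follows essentially the same route as the paper's proof: the non-ancestor cases are dispatched by Lemma~\ref{lem:suf} together with $c_j \leq kp+p$, and cases (b) and (e) of Remark~\ref{rem:nextRCL} are handled by noting that a periodic parent forces its children's change indices into the acceptable range, after which the left-descendant chain keeps all change indices at most $kp+p$. Your write-up just makes the descent path $\delta_0,\dots,\delta_s$ and the monotonicity of change indices along it more explicit than the paper does.
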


\begin{proof}
If $\alpha_{j}$ is not an ancestor of $\alpha_{j-1}$, the inequality $c_j \leq kp + p$ and Lemma~\ref{lem:suf} together imply $\ap(\alpha_j)^{n/p-k-1}$ is a suffix of $\alpha_{j-1}$. It remains to consider cases (b) and (e) from Remark~\ref{rem:nextRCL} where $y = \alpha_{j}$ is an ancestor of $x = \alpha_{j-1}$.  For case (b), $x$ is the rightmost left-descendent of $y$'s last right-child $r_m$. Since $y$ is periodic, the change index of $r_m$ is in $\alpha_j$'s acceptable range; it is less than or equal to $kp+p$. $x$ is a left descendant of $r_m$ and thus $c_{j-1} \leq kp + p$, which means $x$ differs from $r_m$ only in indices less than or equal to $kp + p$. For (e) clearly $x$ differs only in indices less than or equal to $c_j$, which means $c_{j-1} \leq kp+p$.
Thus, for each case, $\ap(\alpha_j)^{n/p-k-1}$ is a suffix of $\alpha_{j-1}$.  
\end{proof}

\begin{lemma} \label{lem:suf_period_strong}
If $\alpha_j$ is periodic with period $p$ and acceptable range $kp+1, \ldots, kp + p$, then $\ap(\alpha_j)^{n/p-k}$  is a suffix of $\ap(\alpha_{j+1}, \ldots, \alpha_t, \alpha_1, \ldots, \alpha_j)$,
which is a rotation of $\RCL(\bigtree{}~)$, considered cyclically.
\end{lemma}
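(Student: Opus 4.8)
The plan is to prove this as the suffix-side mirror of Lemma~\ref{lem:pre_period_strong}: Lemma~\ref{lem:suf_period} plays the role that Lemma~\ref{lem:pre_period} played there, and the needed change-index bound comes from Lemma~\ref{lem:leaf2} when the node in question is the removed leaf and from Lemma~\ref{lem:leaf} when the argument is repeated further along. I would induct on the number of nodes $t$ of $\bigtree{}$. The base case $t=1$ is immediate, since $\RCL(\bigtree{}~)=\ap(\alpha_1)$ is cyclically periodic with period $p$, so the length-$(n-kp)$ string $\ap(\alpha_1)^{n/p-k}$ is a cyclic suffix of it. For $t>1$, I fix a leaf $\alpha_j$ of $\bigtree{}$, let $\bigtree{}'$ be $\bigtree{}$ with $\alpha_j$ deleted, and observe that $\RCL(\bigtree{}~)$ is obtained from $\RCL(\bigtree{}')$ by splicing the block $\ap(\alpha_j)$ between the blocks of $\alpha_{j-1}$ and $\alpha_{j+1}$. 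If $\bigtree{}$ has no periodic node there is nothing to prove, so assume otherwise.

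First I would establish the claim for $\alpha_j$ itself when it is periodic, with period $p$ and acceptable range $kp+1,\ldots,kp+p$. Since its only contribution to the rotation ending at $\alpha_j$ is the single block $\ap(\alpha_j)$ of length $p$, it suffices to show that $\ap(\alpha_j)^{n/p-k-1}$ (of length $n-(k+1)p$) is a suffix of the rotation of $\RCL(\bigtree{}')$ ending at $\alpha_{j-1}$. By Lemma~\ref{lem:suf_period} this string is already a suffix of $\alpha_{j-1}$, so if $\alpha_{j-1}$ is aperiodic I am done, since its block is $\alpha_{j-1}$ itself. If $\alpha_{j-1}$ is periodic with period $p'$ and acceptable range $k'p'+1,\ldots,k'p'+p'$, then since the leaf $\alpha_j$ is not an ancestor of $\alpha_{j-1}$, Lemma~\ref{lem:leaf2} gives $k'p'<c_j$, while $c_j\le kp+p$ because $c_j$ lies in $\alpha_j$'s acceptable range; hence $n-(k+1)p<n-k'p'$. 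The inductive hypothesis says the length-$(n-k'p')$ suffix of the $\bigtree{}'$-rotation ending at $\alpha_{j-1}$ is $\ap(\alpha_{j-1})^{n/p'-k'}$, which, being a power of $\ap(\alpha_{j-1})$, is also the length-$(n-k'p')$ suffix of $\alpha_{j-1}$; its length-$(n-(k+1)p)$ suffix is therefore $\ap(\alpha_j)^{n/p-k-1}$ by Lemma~\ref{lem:suf_period}. Splicing the block $\ap(\alpha_j)$ back on yields $\ap(\alpha_j)^{n/p-k}$, as required.

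For every other periodic node $\alpha_i$ of $\bigtree{}$, the rotation of $\RCL(\bigtree{}~)$ ending at $\alpha_i$ differs from its $\bigtree{}'$-counterpart only by the spliced block $\ap(\alpha_j)$; whenever that block lies outside the relevant length-$(n-k_ip_i)$ suffix window, the inductive hypothesis transfers verbatim. The only nodes whose window can overlap the spliced block are $\alpha_{j+1},\alpha_{j+2},\ldots$, up to the first one at which the cumulative block length reaches $n$, and I would handle these outward from $\alpha_{j+1}$, repeating the argument of the previous paragraph with the predecessor/successor roles reversed: peel off the trailing block of $\alpha_i$, use Lemma~\ref{lem:suf_period} to locate the relevant tail of its RCL-predecessor $\alpha_{i-1}$, use Lemma~\ref{lem:leaf} to bound $c_{i-1}$ by $k_ip_i+p_i$ (so that, with $c_{i-1}>k_{i-1}p_{i-1}$, the window inclusion $n-(k_i+1)p_i<n-k_{i-1}p_{i-1}$ holds), and invoke the already-established claim for $\alpha_{i-1}$.

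The step I expect to be the main obstacle is this last one: pushing the claim through a maximal run of consecutive periodic nodes in RCL order once those nodes are no longer leaves, so that Lemma~\ref{lem:leaf} is unavailable when the predecessor $\alpha_{i-1}$ happens to be an ancestor of $\alpha_i$ --- cases (b) and (e) of Remark~\ref{rem:nextRCL}. There, as in the proof of Lemma~\ref{lem:suf_period}, one instead argues directly that such a predecessor, being a left-descendant of a node whose change index lies in $\alpha_i$'s acceptable range, has change index at most $k_ip_i+p_i$. The rest is bookkeeping: checking that the window inclusion holds at every stage and that each copy of each aperiodic prefix sits where it should. Everything else is a routine reflection of the proof of Lemma~\ref{lem:pre_period_strong}.
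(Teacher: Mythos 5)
Your overall skeleton is the paper's: induct on $t$, delete a leaf $\alpha_j$, establish the claim first for $\alpha_j$ (via Lemma~\ref{lem:suf_period}, the bound $k'p'<c_j\le kp+p$ from Lemma~\ref{lem:leaf2} plus the acceptable range, and the inductive hypothesis on $\tree'$), then propagate outward through $\alpha_{j+1},\alpha_{j+2},\ldots$; that first half is correct and matches the paper essentially verbatim. The problem is in the propagation step --- precisely the step you flagged as the main obstacle --- where your case analysis is inverted. With $x=\alpha_{i-1}$ and $y=\alpha_i$ in the convention of Remark~\ref{rem:nextRCL}, Lemma~\ref{lem:leaf} (which bounds $c_{i-1}\le k_ip_i+p_i$) fails exactly in cases (a) and (d), where the predecessor $\alpha_{i-1}$ is an \emph{ancestor} of $\alpha_i$; those are not cases (b) and (e) as you wrote. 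Worse, the fallback argument you import from the proof of Lemma~\ref{lem:suf_period} --- ``the predecessor, being a left-descendant of a node whose change index lies in $\alpha_i$'s acceptable range, has change index at most $k_ip_i+p_i$'' --- describes the opposite configuration, namely cases (b)/(e) where $\alpha_i$ is an ancestor of $\alpha_{i-1}$; but in those cases Lemma~\ref{lem:leaf} already applies, so no fallback is needed there, while in the genuinely uncovered cases (a)/(d) the predecessor is an ancestor and is not a left-descendant of any child of $\alpha_i$, so the argument you propose simply does not apply.

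The fix is available in the paper's toolkit and is how the paper effectively closes this step: in cases (a)/(d) do not try to bound $c_{i-1}$ at all; bound $c_i$ from below instead. Since $\alpha_{i-1}$ is periodic and $\alpha_i$ is a right-descendant of a child of $\alpha_{i-1}$ whose change index lies in $\alpha_{i-1}$'s acceptable range, you get $c_i> k_{i-1}p_{i-1}$ --- this is exactly the conclusion of Lemma~\ref{lem:leaf2}, whose hypothesis ($\alpha_i$ not an ancestor of $\alpha_{i-1}$) does hold in these cases --- and combining with $c_i\le k_ip_i+p_i$ from $\alpha_i$'s acceptable range yields the same window inclusion $k_{i-1}p_{i-1}<k_ip_i+p_i$ that your Lemma~\ref{lem:leaf} route gives in the other cases. (The paper organizes the same dichotomy the mirror way: it uses the Lemma~\ref{lem:leaf2}-type bound $c_i>k_{i-1}p_{i-1}$ in cases (a)(c)(d)(f) and the direct bound $c_{i-1}\le k_ip_i+p_i$ only in (b)(e).) With that correction your propagation goes through and the rest of your argument coincides with the paper's proof.
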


\begin{proof}  Let $q=n/p$. Note that $|ap(\alpha_j)^{q-k}| \leq n$.
The proof is by induction on $t$.  If $t=1$, the result is trivial. Suppose the claim holds for any tree with less than $t > 1$ nodes. Let $\tree$ have $t$ nodes and let $\alpha_j$ be a leaf node of $\tree$.
If there are no periodic nodes, we are done.  Otherwise, we first consider $\alpha_j$, then all other periodic nodes in $\tree$.

Suppose $\alpha_j$ is periodic with period $p$ and acceptable range $kp+1, \ldots, kp + p$. From Lemma~\ref{lem:suf_period}, $\ap(\alpha_j)^{q-k-1}$ is a suffix of $\alpha_{j-1}$. If $\alpha_{j-1}$ is aperiodic, then we are done. 
Suppose, then, that $\alpha_{j-1}$ is periodic with period $p'$ and acceptable range $k'p'+1, \ldots, k'p' + p'$. Let $\tree'$ be the tree resulting from $\tree$ when $\alpha_j$ is removed. It follows from (i) that $k'p' < c_j \leq kp + p$, or $n - kp - p < n - k'p'$, which implies $\ap(\alpha_j)^{q-k-1}$ is a suffix of $\ap(\alpha_{j-1})^{q'-k'}$, where $q' = n / p'$. Additionally, since $\tree'$ has less than $t$ nodes and $\alpha_{j-1}$ is periodic, $\ap(\alpha_{j-1})^{q'-k'}$ is a suffix of $\ap(\alpha_{j+1}, \ldots, \alpha_t, \alpha_1, \ldots, \alpha_{j-1})$ by our inductive assumption. Therefore, $\ap(\alpha_{j})^{q-k}$ is a suffix of $\ap(\alpha_{j+1}, \ldots, \alpha_t, \alpha_1, \ldots, \alpha_{j-1}, \alpha_{j})$.

Now consider $\alpha_{j+1}$. If it is aperiodic, then by induction the claim clearly holds for all periodic nodes in $\tree'$.  Thus,
assume $\alpha_{j+1}$ is periodic.
By showing $\ap(\alpha_{j+2}, \ldots, \alpha_{t}, \alpha_{1}, \ldots, alpha_{j}, \alpha_{j+1})$ has the desired suffix, then  repeating the same arguments will prove the claim holds for every other periodic node in $\tree'$.
Let $\alpha_{j+1}$ have period $p''$ and acceptable range $k''p'' + 1, \ldots, k''p'' + p''$. 
If $\alpha_j$ is aperiodic, Lemma~\ref{lem:suf_period} implies that $\ap(\alpha_{j+1})^{q''-k''-1}$ is a suffix of $\alpha_j = \ap(\alpha_j)$ and thus the claim holds for $\alpha_{j+1}$. 
If $\alpha_j$ is periodic with period $p$ and acceptable range $kp + 1, \ldots, kp + p$, we already demonstrated that $\ap(\alpha_j)^{q-k}$ 
is a suffix of $\ap(\alpha_{j+1}, \ldots, \alpha_t, \alpha_1, \ldots, \alpha_{j})$.
From Lemma~\ref{lem:suf_period}, $\ap(\alpha_{j+1})^{q''-k''-1}$ is a suffix of $\alpha_j$. Note that (i) and its proof handles cases (a)(c)(d)(f) from Remark~\ref{rem:nextRCL} implying that $c_{j+1} > kp$ for these cases.  Since $\alpha_{j+1}$ is not necessarily a leaf, we must also consider (b) and (e).  In both cases, clearly $c_j \leq k''p'' + p''$. Either way, $kp  < k''p'' + p''$, which means
$\ap(\alpha_{j+1})^{q''-k''-1}$ is a suffix of $\ap(\alpha_j)^{q-p}$.  Thus,
$\ap(\alpha_{j+1})^{q''-k''}$ is a suffix of $\ap(\alpha_{j+2}, \ldots, \alpha_t, \alpha_1, \ldots, \alpha_{j},\alpha_{j+1})$.
\end{proof}

\subsubsection{Proof of Claim~\ref{claim:main}}
\label{sec:proof}

Recall that $U_1 = \ap(\alpha_{j+1}, \ldots, \alpha_t, \alpha_1, \ldots, \alpha_{j-1})$ and  $\alpha_j = \beta_1 \tt{y} \beta_2$. Recall that $\alpha_j$ is assumed to be a leaf with parent $\beta_1 \tt{y}' \beta_2$. Also, if $\tree = \tree_1$, then $\tt{x} = \tt{y}'$;  if $\tree = \tree_2$,  $\tt{x} = \firstg(\tt{y'}\beta_1  \beta_2)$.
Thus, from Lemma~\ref{lem:pre}, $\alpha_{j+1}$ has prefix $\beta_1$ and
from Lemma~\ref{lem:suf}, $\alpha_{j-1}$ has suffix $\tt{x}\beta_2$.
If $\alpha_{j-1}$ and $\alpha_{j+1}$ are aperiodic, then $U_1$ has prefix $\beta_1$ and suffix $\tt{x}\beta_2$ as required. If $t=2$, then we are also done since $U_1$ is considered cyclically.
It remains to consider the cases where  $\alpha_{j-1}$ or $\alpha_{j+1}$ is periodic and $t>2$. These cases apply the ``acceptable range''. 

Suppose $\alpha_{j+1}$ has period $p<n$ and acceptable range $kp+1, \ldots, kp + p$. Since $\alpha_j$ is a leaf, $c_j \leq kp+p$ by Lemma~\ref{lem:leaf}. Thus, $\beta_1$ is a prefix of $\ap(\alpha_{j+1})^{k+1}$ since $\beta_1$ is a prefix of $\alpha_{j+1}$ from Lemma~\ref{lem:pre}.  From 
Lemma~\ref{lem:pre_period_strong}, $\ap(\alpha_{j+1})^{k+1}$ is a prefix of $U_1$. Thus, $\beta_1$ is a prefix of $U_1$.


Suppose $\alpha_{j-1}$ has period $p<n$ and acceptable range $kp+1, \ldots , kp + p$. Since $\alpha_j$ is a leaf, 
$c_j > kp$ by Lemma~\ref{lem:leaf2}.
Thus, $\tt{x}\beta_2$ is a suffix of $\ap(\alpha_{j-1})^{n/p-k}$ since  $\tt{x}\beta_2$ is a suffix of $\alpha_{j-1}$ from Lemma~\ref{lem:suf}.
 From 
Lemma~\ref{lem:suf_period_strong}, $\ap(\alpha_{j-1})^{n/p-k}$ is a suffix of $U_1$. Thus, $\tt{x}\beta_2$ is a suffix of $U_1$.


\section{Applications} \label{sec:app}

In this section we highlight how our concatenation tree framework can be applied to a variety of interesting objects including permutations, weak orders, orientable sequences, and  DB sequences with related universal cycles.  
For each object, we  define a PCR-based cycle-joining tree $\cycletree$ that satisfies the Chain Property, where each node is a necklace (the lex smallest representative).  Then we apply the concatenation tree framework and Algorithm~\ref{algo:recursive} to construct the corresponding universal cycles in $O(1)$-amortized time per symbol. 
\subsection{De Bruijn sequences and relatives} \label{sec:dbseq}

Recall that $\pcr{1}$, $\pcr{2}$, $\pcr{3}$, and $\pcr{4}$ are stated generally for subtrees of the corresponding cycle-joining trees $\cycletree_1, \cycletree_2,\cycletree_3, \cycletree_4$; they focus on binary strings, and thus satisfy the Chain Property. 
Though we focus on the binary case, these trees can be generalized to larger alphabets following the theory in~\cite{karyframework}.  For instance, the parent rule used to create $\cycletree_1$ can be generalized to ``increment the last non-$(k{-}1)$'' where the alphabet is $\Sigma = \{0,1,\ldots , k-1\}$. 
\begin{theorem} \label{cor:pcr}
Let $T_1, T_2, T_3, T_4$ be subtrees of $\cycletree_1, \cycletree_2,\cycletree_3, \cycletree_4$, respectively.
For any $1 \leq c \leq n$ and $\ell \in \{\mathit{left}, \mathit{right}\}$: 
\begin{itemize}
\item  $U_1 = \RCL(\convert(T_1,c,\ell))$ is a universal cycle for $\mathbf{S}_{T_1}$ with successor rule $\pcr{1}$.
\item  $U_2 = \RCL(\convert(T_2,c,\ell))$ is a universal cycle  for $\mathbf{S}_{T_2}$ with successor rule $\pcr{2}$.
\item  $U_3 = \RCL(\convert(T_3,c,\ell))$ is a universal cycle  for $\mathbf{S}_{T_3}$ with successor rule $\pcr{3}$.
\item  $U_4 = \RCL(\convert(T_4,c,\ell))$ is a universal cycle  for $\mathbf{S}_{T_4}$ with successor rule $\pcr{4}$.
\end{itemize}
\end{theorem}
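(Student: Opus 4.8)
The plan is to derive Theorem~\ref{cor:pcr} as a direct corollary of the already-proved Theorem~\ref{thm:main}. The main content of Theorem~\ref{thm:main} is that for \emph{any} PCR-based cycle-joining tree $\cycletree$ satisfying the Chain Property, the RCL traversal of either the left or right concatenation tree produces a universal cycle for $\mathbf{S}_\cycletree$ whose successor rule is $\fup$ (respectively $\fdown$). So the work that remains is purely bookkeeping: (i) check that each $T_i$ is a legitimate input to Theorem~\ref{thm:main}, and (ii) verify that the generic successor rules $\fup$ and $\fdown$, when specialized to the binary trees $\cycletree_1,\ldots,\cycletree_4$ and their subtrees, coincide with the explicit rules $\pcr{1},\ldots,\pcr{4}$ stated in Section~\ref{sec:big4}.

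First I would dispatch (i). Each $\cycletree_i$ is binary ($k=2$), so by the remark following the statement of the Chain Property it automatically satisfies that property; and any subtree $T_i$ of a tree satisfying the Chain Property again satisfies it, since the condition is local to a node and its children. Hence Theorem~\ref{thm:main} applies to $T_i$ for every choice of root change index $c\in\{1,\dots,n\}$ and every $\ell\in\{\mathit{left},\mathit{right}\}$, giving that $\RCL(\convert(T_i,c,\ell))$ is a universal cycle for $\mathbf{S}_{T_i}$ with successor rule $\fup$ or $\fdown$. Moreover, because $k=2$, every chain has length $m=2$ (a node and one of its children sharing the tail $\beta$ — a third string $\tt{z}\beta$ would force a repeated symbol), so by Remark~\ref{rem:unique} we have $f_0 = \fup = \fdown$; thus the choice of $\ell$ and the distinction between left/right concatenation trees is immaterial to the successor rule, and it suffices to identify $f_0$ (the generic rule from~\cite{binframework}) with each $\pcr{i}$.

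The substantive step is (ii): showing $f_0$, instantiated on $T_i$ with its conjugate-pair set $\mathbf{C}_i$, equals $\pcr{i}$. Recall $f_0(\alpha) = \overline{\tt{a}}_1$ iff $\alpha$ belongs to some conjugate pair in $\mathbf{C}_i$. For each $i$ I would translate ``$\alpha$ belongs to a conjugate pair of $T_i$'' into the condition appearing in $\pcr{i}$, namely ``$\gamma$ is a necklace \emph{and} $\tt{a}_2\cdots\tt{a}_n\overline{\tt{a}}_1 \in \mathbf{S}_{T_i}$,'' where $\gamma$ is the rotation defined in the corresponding box. The key observation is that an edge of $\cycletree_i$ incident to a node is, by the parent rules, exactly a conjugate pair in which the flipped bit (last/first $0$ or $1$) is the leading symbol of one conjugate; so $\alpha$ belongs to a conjugate pair precisely when, after the appropriate rotation bringing the ``flip position'' of $\alpha$ to the front, the resulting string is a necklace (this is the ``$\gamma$ is a necklace'' clause, capturing that $\alpha$ sits on the parent-edge of \emph{some} node of the full tree $\cycletree_i$) and that node together with its parent both lie in the subtree $T_i$ (this is the ``$\in \mathbf{S}_{T_i}$'' clause, capturing the restriction to the subtree). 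For $\cycletree_1$ and $\cycletree_2$ this matching is essentially the content of the successor-rule derivations of~\cite{binframework,karyframework}, which I would cite; for $\cycletree_3$ and $\cycletree_4$ one additionally uses that the parent is obtained by flipping the named bit \emph{and then re-rotating to the lex-least representative}, which is exactly why $\gamma = \tt{a}_2\cdots\tt{a}_n1$ (resp.\ $0\tt{a}_2\cdots\tt{a}_n$) tests necklace-hood of the flipped-and-rotated string rather than of the original. The one point requiring care — and the place I expect to spend the most effort — is verifying the subtree clause ``$\tt{a}_2\cdots\tt{a}_n\overline{\tt{a}}_1 \in \mathbf{S}_{T_i}$'' is the correct encoding of ``both endpoints of the edge lie in $T_i$'': one must check that $\tt{a}_2\cdots\tt{a}_n\overline{\tt{a}}_1$ is a rotation of the node on the \emph{other} side of the conjugate pair from $\alpha$'s cycle, so that its membership in $\mathbf{S}_{T_i}$ is equivalent to that node belonging to $T_i$, while $\alpha$'s own node belonging to $T_i$ is already guaranteed since $\alpha \in \mathbf{S}_{T_i}$ is the domain of the successor rule. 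Once this correspondence is pinned down for each of the four parent rules, $f_0 = \pcr{i}$ on $T_i$ follows, and combined with Theorem~\ref{thm:main} this yields all four bullet points.
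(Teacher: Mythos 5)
Your proposal is correct and follows essentially the same route as the paper: the paper's proof is exactly the one-line deduction from Theorem~\ref{thm:main} together with Remark~\ref{rem:unique} (binary alphabet forces the Chain Property and chains of length $2$, so $f_0=\fup=\fdown$ and the choice of $c$ and $\ell$ is immaterial). The identification $f_0=\pcr{i}$ that you flag as the substantive step is not re-derived in the paper's proof either; it is taken as established in Section~\ref{sec:big4} via the cited prior work, so your plan to spell it out is extra diligence rather than a divergence in approach.
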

\begin{proof}
The results follow immediately from Remark~\ref{rem:unique} and Theorem~\ref{thm:main}.
\end{proof}
Interesting subtrees applied to the above theorem include nodes with:
(i) a lower bound on weight ($T_1$ and $T_4$),
(ii) an upper bound on weight ($T_2$ and $T_3$),
(iii) forbidden $0^s$ substring ($T_1$ and $T_4$),
(iv) forbidden $1^s$ substring ($T_2$ and $T_3$).
Universal cycles for strings with bounded weight (based on $T_1$ and $T_2$)~\cite{walcom,dbrange,min-weight} and universal cycles with forbidden $0^s$ (based on $T_1$)~\cite{GS18,generalized-greedy} have been previously studied; the latter has found recent application in quantum key distribution schemes~\cite{cheeRLL}. Theorem~\ref{cor:pcr} generalizes these result and provides a connection between the concatenation constructions and corresponding successor rules.

If the subtrees in Theorem~\ref{cor:pcr} are $\cycletree_1, \cycletree_2, \cycletree_3$, and $\cycletree_4$, respectively, the universal cycles are DB sequences.  Specifically, let  
\begin{itemize}
    \item $\tree_{1} = \convert(\cycletree_1,1,\mathit{right})$,
    \item $\tree_{2} = \convert(\cycletree_2,n,\mathit{left})$,
    \item $\tree_{3} = \convert(\cycletree_3,1,\mathit{right})$, and
    \item $\tree_{4} = \convert(\cycletree_4,n,\mathit{left})$.
\end{itemize}

\noindent
Recall that the Granddaddy DB sequence can be constructed by concatenating the aperiodic prefixes of necklaces as they appear in lexicographic order; it is known to have the successor rule $\pcr{1}$, and the sequence can be generated in $O(1)$-amortized time  per bit.

\begin{corollary}
  $\RCL(\tree_{1})$ is the Granddaddy DB sequence with successor rule $\pcr{1}$.
\end{corollary}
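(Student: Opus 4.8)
The statement has two halves — that $\RCL(\tree_{1})$ has successor rule $\pcr{1}$, and that it \emph{is} the Granddaddy — and the first is essentially immediate. Apply Theorem~\ref{cor:pcr} with $T_1 = \cycletree_1$ (a subtree of itself), $c = 1$, and $\ell = \mathit{right}$: since the nodes of $\cycletree_1$ are exactly $\mathbf{N}_2(n)$ and $\bigcup_{\alpha \in \mathbf{N}_2(n)} \rot{\alpha} = \{0,1\}^n$, the set $\mathbf{S}_{\cycletree_1}$ is all binary strings of length $n$, so $\RCL(\tree_{1})$ is a de Bruijn sequence with successor rule $\pcr{1}$. By Remark~\ref{rem:unique} this cyclic sequence is independent of the choices of $c$ and $\ell$ and coincides with the sequence generated by $f_0$ on $\cycletree_1$, which by \cite{fred-succ} (see also \cite{knuth}) is the Ford sequence, i.e.\ the Granddaddy. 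So, as a cyclic sequence, the identification is already in hand; what remains — and this is the demystifying content — is to check that, written out as a string, $\RCL(\tree_{1})$ is \emph{verbatim} the classical Fredricksen--Maiorana concatenation $\ap(\lex(\mathbf{N}_2(n)))$ \cite{fkm2}.

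For that I would first verify that $\convert(\cycletree_1,1,\mathit{right})$ relabels nothing: every node of $\tree_{1}$ is literally the necklace that labels it in $\cycletree_1$, and its change index is the position at which it differs from its parent, namely its own \emph{last $0$}. This is an induction down $\cycletree_1$. If $P$ is the (inductively, necklace-labelled) parent of $C$, then $P$ is $C$ with the last $0$ of $C$, say at index $i$, flipped to a $1$; writing $P = \beta_1 \mathsf{x}\beta_2$ with $\mathsf{x} = P[i] = 1$ at index $i$, the joining conjugate pair is $(\mathsf{x}\beta_2\beta_1,\mathsf{y}\beta_2\beta_1)$ with $\mathsf{y}=0$, so the concatenation-tree rule assigns $C$ the label $\beta_1\mathsf{y}\beta_2 = P$ with position $i$ reset to $0$, which is again the necklace $C$. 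The only nontrivial point is that $i$ lies in $P$'s acceptable range when $P$ is periodic with period $p$: here a short run-length argument does it — $P$'s last $0$ (which equals $P$'s change index) must lie in the final length-$p$ block of $P$, since otherwise $p$-periodicity forces a later $0$; and $C$'s last $0$ at index $i$ is strictly to the right of $P$'s last $0$, so $i$ too lies in $\{n-p+1,\dots,n\}$, the acceptable range.

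Next I would show the traversal order is lexicographic. Passing from $P$ to any child $C$, the change index strictly increases, because flipping $C$'s last $0$ to obtain $P$ only \emph{deletes} that $0$, so $P$'s last $0$ is strictly earlier than $C$'s. With the root change index set to $1$ and $\ell = \mathit{right}$, this forces every non-root node to be a right-child (equality of change indices occurs only for $01^{n-1}$ versus $1^n$, which the ``right'' convention absorbs). Hence an RCL traversal of $\tree_{1}$ degenerates to an ordinary post-order in which the children of each node are visited in increasing order of change index — and among the children of a fixed necklace, increasing change index agrees with increasing lexicographic order (a child differing at a later index agrees with $P$ there, where $P$ has a $1$). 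So the traversal lists $\mathbf{N}_2(n)$ in lexicographic order, which is exactly the post-order/FKM correspondence already recalled in Section~\ref{sec:insight}; therefore $\RCL(\tree_{1}) = \ap(\lex(\mathbf{N}_2(n)))$, the Granddaddy, with successor rule $\pcr{1}$ by the first paragraph.

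The main obstacle is the acceptable-range check in the second paragraph: that is the one place the argument is more than a one-liner, and it is precisely where the periodic-node machinery of concatenation trees has to be invoked carefully. Everything else — the de Bruijn property, the successor rule, the strict monotonicity of change indices down the tree, and ``post-order visits necklaces lexicographically'' — is immediate from Theorem~\ref{cor:pcr}, Remark~\ref{rem:unique}, and results already in the excerpt.
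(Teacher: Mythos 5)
Your proposal is correct and takes essentially the same route as the paper's proof: you show the conversion $\convert(\cycletree_1,1,\mathit{right})$ leaves every node labelled by its necklace with change index at its own last $0$, deduce that all children are right-children ordered lexicographically so that the RCL traversal is a post-order visiting $\mathbf{N}_2(n)$ in lexicographic order (the FKM/Granddaddy concatenation), and obtain the successor rule $\pcr{1}$ from Theorem~\ref{cor:pcr} --- the only difference being that you spell out the acceptable-range induction that the paper's proof leaves implicit. One small caveat: your claim that a periodic parent's acceptable range is its final length-$p$ block presumes the parent contains a $0$, so the root $1^n$ (whose acceptable range is $\{1\}$) needs the separate, trivial check that its unique child is labelled $01^{n-1}$ with change index $1$; this does not affect the argument.
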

\begin{proof}
$\tree_{1}$ is based on the ``last 0'' cycle-joining tree rooted at $1^n$,  the change index of the root is $1$, and the tree is right-concatenation tree. Thus, the representatives of each node are necklaces (flipping the last 0 of a necklace yields a necklace), where the change index of each child is after the last 0.  This means
each child is a right-child that is lexicographically smaller than its parent, and the children are ordered lexicographically left to right. 
Therefore, RCL order is a post-order traversal of $\tree_{1}$ that visits the necklaces $\mathbf{N}_2(n)$ in lexicographic order.  Thus, $\RCL(\tree_{1})$ is the Granddaddy DB sequence, and by  Theorem~\ref{cor:pcr} it has successor rule $\pcr{1}$.
\end{proof}

The Grandmama DB sequence can be constructed by concatenating the aperiodic prefixes of necklaces as they appear in co-lexicographic order; it is known to have the successor rule $\pcr{2}$, and the sequence can be generated in $O(1)$ amortized time  per bit.
%

\begin{corollary}
 $\RCL(\tree_{2})$ is the Grandmama DB sequence with successor rule $\pcr{2}$. 
\end{corollary}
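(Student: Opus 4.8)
The plan is to follow the template of the Granddaddy corollary, using that $\cycletree_2$ (flip the first $1$) is the mirror image of $\cycletree_1$ (flip the last $0$). First I would unpack $\tree_{2} = \convert(\cycletree_2,n,\mathit{left})$: the underlying cycle-joining tree is the ``first $1$'' tree rooted at $0^n$, the root is assigned change index $n$, and a node with the same change index as its parent is treated as a left-child.

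Second, I would check that the concatenation relabeling fixes every node to its necklace label in $\cycletree_2$. The ingredients are: (i) flipping the first $1$ of a necklace yields a necklace, so the ``necklace tree'' is well defined; (ii) passing from a parent necklace $\alpha$ to a child in $\tree_{2}$ turns a single $0$ of $\alpha$ into a $1$ at the change index $i$, and since $\alpha$ has a $0$ at position $i$ this $i$ is strictly to the left of $\alpha$'s first $1$, so the child's first $1$ sits at position $i$, flipping it back recovers $\alpha$, and the child's label is precisely the unique necklace child of $\alpha$ in $\cycletree_2$ whose first $1$ is at $i$; (iii) consequently each node's change index equals the position of its own first $1$ (and $n$ for the root), and I would verify this always lies in the node's acceptable range — the only delicate point being that $0^n$ is periodic with acceptable range $\{n\}$, which matches its unique child $0^{n-1}1$ (first $1$ at position $n$), while every other periodic necklace turns out to be a leaf.

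Third, (ii) shows every non-root child has change index strictly less than its parent's, and the root's single child has change index $n$ equal to the root's; since $\tree_{2}$ is a left concatenation tree, every child is therefore a left-child, ordered by increasing change index, i.e.\ by increasing first-$1$ position. A node with no right-children has RCL order ``current node, then left-children from first to last,'' so an RCL traversal of $\tree_{2}$ is a pre-order traversal whose children are visited by increasing first-$1$ position. Then — exactly as post-order of $\cycletree_1$ lists necklaces lexicographically, and as already noted in Section~\ref{sec:insight} (see also~\cite{grandma2}) — this pre-order visits $\mathbf{N}_2(n)$ in co-lexicographic order: a child places a $1$ where its parent has a $0$, at their rightmost point of disagreement, so each necklace precedes all of its descendants in co-lex order, and among siblings increasing first-$1$ position coincides with increasing co-lex order; a pre-order with this child order thus realizes co-lex order on all of $\mathbf{N}_2(n)$. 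Hence $\RCL(\tree_{2})$ is the concatenation of the aperiodic prefixes of the necklaces in co-lexicographic order, i.e.\ the Grandmama DB sequence, and the successor rule $\pcr{2}$ is then immediate from Theorem~\ref{cor:pcr}.

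The step I expect to be the main obstacle is the second one: keeping the bookkeeping straight among a necklace label, the conjugate pair $(\tt{x}\beta_2\beta_1,\tt{y}\beta_2\beta_1)$ realizing an edge, the index $i$ of $\tt{x}$ inside $\alpha$, and the acceptable-range constraint for periodic necklaces — and, relatedly, establishing that ``increasing change index among siblings'' genuinely coincides with co-lexicographic order rather than merely agreeing on small cases.
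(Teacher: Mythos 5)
Your proposal is correct and follows essentially the same route as the paper's proof: identify $\tree_2$ as the ``first~1'' necklace tree with root change index $n$, observe every child is a left-child with change index at its first-1 position, conclude RCL is a pre-order traversal visiting $\mathbf{N}_2(n)$ in co-lexicographic order, and invoke Theorem~\ref{cor:pcr} for the successor rule $\pcr{2}$. The extra care you take with the acceptable range of the periodic root $0^n$ (and noting other periodic necklaces are leaves) and with the sibling/descendant co-lex ordering is sound detail that the paper's terser proof leaves implicit.
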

\begin{proof}

 $\tree_{2}$ is based on the ``first 1'' cycle-joining tree rooted at $0^n$,  the change index of the root is $n$, and the tree is left-concatenation tree. Thus, the representatives of each node are necklaces (flipping the first 1 of a necklace yields a necklace), where the change index of each child is before the first 1.  This means each child is a left-child that is lexicographically larger than its parent and the children are ordered co-lexicographically. Therefore, RCL order is a pre-order traversal of $\tree_{2}$ that visits the necklaces $\mathbf{N}_2(n)$ in co-lexicographic order. Thus, $\RCL(\tree_{2})$ is the Grandmama DB sequence, and by  Theorem~\ref{cor:pcr} it has successor rule $\pcr{2}$.
 \end{proof}


\noindent
Though the concatenation-tree framework is not necessary to obtain a more efficient DB sequence construction for the Granddaddy and Grandmama, there is a significant improvement in the simplicity of verifying both the correctness and the equivalence of the concatenation and successor rule constructions. 

We now  answer an unproved claim about the correspondence between the DB sequence generated by $\pcr{3}$ and a concatenation construction from~\cite{GS18}.  In particular,
let the representative of each necklace class be the necklace with the initial prefix of 0s rotated to the suffix, so each representative (except $0^n$) begins with 1.
The construction from~\cite{GS18}  concatenates the aperiodic prefixes of these representatives as they appear in reverse lexicographic order; here we name it the \emph{Granny} DB sequence.   As an example, see the sequence generated by $\pcr{3}$ in Table~\ref{tab:db6}. 
This sequence can also be generated in $O(1)$-amortized time  per bit~\cite{wong}.

\begin{corollary}
  $\RCL(\tree_{3})$ is the Granny DB sequence with successor rule $\pcr{3}$.
\end{corollary}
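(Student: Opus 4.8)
The plan is to mirror the proofs of the Granddaddy and Grandmama corollaries, adapting to the special feature of $\cycletree_3$, namely that its necklace node labels are \emph{not} the representatives used in the concatenation construction; the concatenation-tree framework automatically performs the needed relabelling. First I would recall that $\cycletree_3$ is the ``last 1'' tree rooted at $0^n$, where the parent of a node $\alpha$ is obtained by flipping the last $1$ and then rotating to the lexicographically least rotation. In $\convert(\cycletree_3, 1, \mathit{right})$ the root $0^n$ is assigned change index $1$; I then need to trace how the recursive label/change-index definition of Section~\ref{sec:concat} transforms each necklace node label. The key claim is that, after the relabelling, each node's representative is exactly the Granny representative — the necklace with its leading block of $0$s rotated to the end — and that the change index of every non-root node is $n$ (so every non-root node is a right-child of its parent, since we took the \emph{right} concatenation tree and change index $n$ is the maximum).

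The heart of the argument is this relabelling computation. Given a Granny representative $\gamma = 1\cdots$ for a necklace class and a child class whose necklace is obtained by flipping the last $1$ of the necklace, I need to show that the conjugate-pair edge, expressed relative to $\gamma$, places the changed symbol at index $n$. Concretely, if the necklace is $0^i 1 w$ and its Granny representative is $1 w 0^i$, then flipping the last $1$ and re-canonicalizing corresponds, in the Granny representative, to changing the final symbol (the last $0$ of the $0^i$ suffix, or more generally the last symbol) from $1$ to $0$ — i.e. the edge is $(\,\tt{0}\cdot(\text{prefix}),\ \tt{1}\cdot(\text{prefix})\,)$ read starting at position $n$. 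This forces change index $c = n$ for every non-root node. Since change index $n$ is maximal, every non-root node is a right-child of its parent in $\tree_3$; there are no left-children, and the right-children of a given node are ordered by increasing change index — but they all share change index $n$, so I must check (using the Chain Property, which holds trivially for $k=2$) that a node has at most one child via flipping-the-last-$1$ at each relevant position, so the ordering among siblings is inherited and is in fact the natural one. With only right-children, an RCL traversal becomes: recurse on all children (left to right), then print the current node — i.e. a \emph{post-order} traversal.

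The final step is to identify the post-order sequence of node labels with reverse-lexicographic order of the Granny representatives. The Granddaddy corollary used the fact that in $\cycletree_1$ a post-order traversal, with children ordered lexicographically, visits necklaces in lex order. Here I need the analogous fact: in $\cycletree_3$, with the relabelled Granny representatives and siblings ordered so that RCL (= post-order) respects it, a post-order traversal visits the Granny representatives in reverse-lex order. I would argue this by the standard fact that flipping the last $1$ of a necklace (then re-canonicalizing) produces a lexicographically \emph{smaller} necklace, which after the $0$-block rotation translates to a \emph{larger} Granny representative (a child is lex-greater than its parent among Granny reps); combined with the sibling order this makes post-order enumerate the Granny reps from largest to smallest. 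Hence $\RCL(\tree_3)$ concatenates the aperiodic prefixes of the Granny representatives in reverse-lexicographic order, which is by definition the Granny DB sequence; and by Theorem~\ref{cor:pcr} (the $\cycletree_3$ case) it has successor rule $\pcr{3}$.

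The main obstacle I expect is the relabelling bookkeeping: carefully verifying that the recursive label definition of Section~\ref{sec:concat}, starting from change index $1$ at the root $0^n$, really does propagate Granny representatives with change index $n$ at every non-root node — in particular handling periodic necklaces correctly via the ``acceptable range'' (e.g. $\gamma = 1010$-type reps), where one must confirm the change index $n$ lies in the acceptable range and the $\ap$ of the relabelled node still gives the correct Granny aperiodic prefix. Once that is pinned down, the reduction of RCL to post-order and the matching with reverse-lex order of Granny reps is routine, paralleling the Granddaddy proof.
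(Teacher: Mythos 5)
There is a genuine gap at the step you yourself call the heart of the argument: the claim that, after relabelling, \emph{every} non-root node of $\tree_3 = \convert(\cycletree_3,1,\mathit{right})$ has change index $n$. This is false. The correct relabelling (and the one the paper uses) gives each non-root node the Granny representative as its label, but its change index is the position of the \emph{last 1} of that representative, i.e., it lies somewhere in the parent's trailing block of $0$s --- not at position $n$ in general. Concretely, for $n=6$ the node labelled $100000$ (the Granny representative of the necklace $000001$, change index $1$) has three children, labelled $110000$, $101000$, $100100$, with change indices $2$, $3$, $4$ respectively. Your claim would force all of them to have change index $6$; but then, since siblings sharing a change index would have to arise from conjugate pairs with the same tail $\beta$, the Chain Property would permit at most one child per node, i.e., the tree would be a path --- contradicting the actual branching of $\cycletree_3$. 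Your subsequent treatment of sibling order ("they all share change index $n$, so the ordering among siblings is inherited") is therefore also broken: it is precisely the \emph{distinct} change indices of the children (increasing change index $=$ decreasing lexicographic order of Granny representatives) that supply the sibling order needed for the traversal argument.

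Your final conclusions (all children are right-children, hence RCL reduces to a post-order traversal; a child's Granny representative is lexicographically larger than its parent's; post-order therefore enumerates the Granny representatives in reverse-lex order; the successor rule follows from the $\cycletree_3$ case of the subtree theorem) are all correct and match the paper's proof in spirit. But as written the proof would fail at the pivotal bookkeeping step: what must be verified is that the change index of each child falls in the parent's rotated suffix of $0$s (so it is at least the parent's change index, with equality only for the root's child, which the right-concatenation convention makes a right-child), not that it equals $n$. Repairing that computation --- and then noting that among siblings increasing change index coincides with decreasing lex order of the representatives --- recovers the paper's argument.
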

\begin{proof}
$\tree_{3}$ is based on the ``last 1'' cycle-joining tree rooted at $0^n$,  the change index of the root is $1$, and the tree is right-concatenation tree. Thus, it is not difficult to observe recursively that the representative of each non-root node is a necklace with the initial prefix of 0s is rotated to the suffix, so it begins with a 1.
Furthermore, the change index of each child is in the rotated suffix of 0s.
This means each child is a right-child that is lexicographically smaller than its parent and the children are ordered in reverse lexicographic order. Therefore, RCL order is a post-order traversal of $\tree_{3}$ that visits the representatives in reverse-lexicographic order.  Thus, $\RCL(\tree_{3})$ is the Granny DB sequence, and by  Theorem~\ref{cor:pcr} it has successor rule $\pcr{3}$.
\end{proof}

%

Recall that the question of whether or not there existed a ``simple'' concatenation construction for $\pcr{4}$ was the motivating question that lead to the discovery of concatenation trees. Unfortunately, it appears as the though the resulting RCL traversal is not so simple; each
node representative appears to require recursive information about its parent (and hence the tree structure).  Here, we name the sequence constructed by $\RCL(\tree_{4})$ the \emph{Grandpa} DB sequence. Experimental evidence indicates an algorithm that runs in $O(1)$-amortized time per bit may exist using the concatenation tree framework; however, the analysis appears non-trivial.

\begin{corollary}
  $\RCL(\tree_{4})$ is the Grandpa DB sequence with successor rule $\pcr{4}$.
\end{corollary}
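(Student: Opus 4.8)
The plan is to obtain this corollary as an immediate specialization of Theorem~\ref{cor:pcr}. By definition $\tree_{4} = \convert(\cycletree_4, n, \mathit{left})$, which is exactly the object $\RCL(\convert(T_4, c, \ell))$ appearing in the fourth bullet of Theorem~\ref{cor:pcr} in the case where the subtree $T_4$ is taken to be the full cycle-joining tree $\cycletree_4$ (not a proper subtree), with change index $c = n$ and $\ell = \mathit{left}$. Theorem~\ref{cor:pcr} then tells us directly that $\RCL(\tree_{4})$ is a universal cycle for $\mathbf{S}_{\cycletree_4}$ with successor rule $\pcr{4}$, so the only remaining work is to identify the set $\mathbf{S}_{\cycletree_4}$.

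For that step I would argue that the nodes of $\cycletree_4$ are labelled by all of $\mathbf{N}_2(n)$: starting from $1^n$ and repeatedly flipping the first $0$ (and then rotating to the lexicographically least rotation) reaches every binary necklace of order $n$ while strictly decreasing a suitable potential, so the ``first $0$'' parent rule is well-defined on every non-root necklace and yields a connected acyclic graph, i.e.\ a rooted tree on $\mathbf{N}_2(n)$. Since the necklace classes $\{\rot{\alpha} : \alpha \in \mathbf{N}_2(n)\}$ partition $\{0,1\}^n$, we get $\mathbf{S}_{\cycletree_4} = \bigcup_{\alpha \in \mathbf{N}_2(n)} \rot{\alpha} = \{0,1\}^n = \Sigma^n$. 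Hence $\RCL(\tree_{4})$ is a universal cycle for $\Sigma^n$ --- that is, a de Bruijn sequence of order $n$ --- with successor rule $\pcr{4}$. As the \emph{Grandpa} DB sequence was just defined to be the sequence $\RCL(\tree_{4})$, nothing further is needed.

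I do not expect a genuine obstacle here; all of the difficulty has been absorbed into Theorem~\ref{thm:main} and its specialization Theorem~\ref{cor:pcr}. The one hypothesis worth double-checking is that $\cycletree_4$ satisfies the Chain Property, which holds automatically because $k=2$ (no binary string lies in two distinct conjugate pairs, as observed after Theorem~\ref{thm:concat} and after the Chain Property). It is worth emphasizing what this corollary deliberately does \emph{not} claim: unlike the Granddaddy, Grandmama, and Granny corollaries, I would not try to give a closed-form description of the node representatives of $\tree_{4}$ or of the order in which the RCL traversal visits them --- the surrounding discussion indicates each representative appears to depend recursively on the tree structure --- and the question of an explicit, $O(1)$-amortized concatenation rule for the Grandpa sequence is left as the motivating open problem rather than as part of this proof.
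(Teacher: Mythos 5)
Your proposal is correct and matches the paper's (implicit) argument: the paper states this corollary without proof precisely because, as you say, it is the specialization of Theorem~\ref{cor:pcr} to the full tree $T_4=\cycletree_4$ with $c=n$ and $\ell=\mathit{left}$, combined with the observation that the necklace classes cover all of $\{0,1\}^n$ and that ``Grandpa'' is simply the name given to $\RCL(\tree_{4})$. Your added checks (Chain Property for $k=2$, the ``first 0'' rule reaching $1^n$) are consistent with what the paper takes for granted, so nothing further is needed.
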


\subsection{Universal cycles for shorthand permutations} \label{sec:perm}

A \defo{shorthand} permutation is a length $n{-}1$ prefix of some permutation $\tt{p}_1\tt{p}_2\cdots \tt{p}_n$. Let $\mathbf{SP}(n)$ denote the set of shorthand permutations of order $n$.  For example, $\mathbf{SP}(3) = \{12,13, 21, 23, 31, 32\}$.
Note that $\mathbf{SP}(n)$ is closed under rotation.  The  necklace classes of $\mathbf{SP}(n)$ can be joined into a PCR-based cycle-joining tree via the following parent rule~\cite{karyframework},  
where each cycle representative is a necklace. 

\begin{result} \small
\noindent
{\bf Parent rule for cycle-joining the necklaces in $\mathbf{SP}(n)$:}  Let $r$ denote the root  $12\cdots (n-1)$. Let $\sigma$ denote a non-root node where $\tt{z}$ is the missing symbol.  If $\tt{z} = n$, let $j>1$ denote the smallest index such that $\tt{p}_j < \tt{p}_{j-1}$,
otherwise let $j$ denote the index of $\tt{z}+1$.  Then 
\[ \parent(\sigma) = \tt{p}_1\cdots \tt{p}_{j-1}\blue{\tt{z}}\tt{p}_{j+1}\cdots \tt{p}_{n-1}. \]

\vspace{-0.15in}
 \end{result}

\noindent
Let $\cycletree_{perm}$ be the cycle-joining tree derived from the above parent rule; it satisfies the Chain Property. Observe that a node $\sigma = \tt{p}_1\tt{p}_2\cdots \tt{p}_{n-1}$ in $\cycletree$ will have at most two children. In particular, 
if $\tt{z}$ is the missing symbol, $\tt{p}_1\cdots \tt{p}_j
\tt{z} \tt{p}_{j+1} \cdots \tt{p}_{n-1}$ is a child of $\sigma$ if and only if 
(i) $\tt{p}_j = \tt{z}{-}1$, or (ii) $\tt{p}_j = n$, $\tt{p}_1\cdots \tt{p}_{j-1}$ is increasing, and $\tt{z} < \tt{p}_{j-1}$. 
Figure~\ref{fig:perm} illustrates $\cycletree_{perm}$ and $\bigtree{perm} = \convert(\cycletree_{perm},n{-}1,\mathit{left})$ for $n=4$.  Let $U_{perm} = \RCL(\bigtree{perm})$, when $n$ is understood. Then, for $n=4$: 
\[ U_{perm} = 123~124~132~143~243~142~134~234.\]  
A unique successor rule $f_{perm}$ for the universal cycle derived from $\cycletree_{perm}$ can be computed in $O(n)$ time~\cite{karyframework}.  

\begin{figure}[ht]
    \centering
        \resizebox{3.1in}{!}{\includegraphics{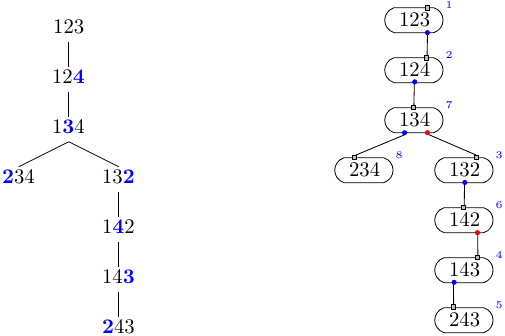}}
        \caption{A cycle joining tree $\cycletree_{perm}$ of shorthand permutation necklaces for $n=4$, and the corresponding concatenation tree $\bigtree{perm}$  illustrating the RCL order.   
        }
        \label{fig:perm}
\end{figure}
\begin{theorem}
$U_{perm}$ is a universal cycle for $\mathbf{SP}(n)$ with successor rule $f_{perm}$.  Moreover, $U_{perm}$ can be constructed in $O(1)$-amortized time per symbol using $O(n^2)$ space.
\end{theorem}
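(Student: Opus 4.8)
The plan is to derive this theorem as a direct application of the general machinery already assembled, so almost no new work is needed. First I would invoke Theorem~\ref{thm:main}: since $\bigtree{perm} = \convert(\cycletree_{perm}, n{-}1, \mathit{left})$ is the left concatenation tree derived from the PCR-based cycle-joining tree $\cycletree_{perm}$, and $\cycletree_{perm}$ satisfies the Chain Property (as already asserted in the surrounding text), Theorem~\ref{thm:main} immediately tells us that $U_{perm} = \RCL(\bigtree{perm})$ is a universal cycle for $\mathbf{S}_{\cycletree_{perm}} = \mathbf{SP}(n)$ with successor rule $\fup$. Then I would note that each node $\sigma$ in $\cycletree_{perm}$ has at most two children, so every chain has length $m \le 2$; by Remark~\ref{rem:unique}, $\cycletree_{perm}$ induces a unique universal cycle and $\fup = \fdown = f_{perm}$, where $f_{perm}$ is the successor rule from~\cite{karyframework}. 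This settles the correctness and successor-rule parts of the statement.

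For the complexity claim, the plan is to apply Theorem~\ref{thm:algo}, which reduces the $O(1)$-amortized-time-per-symbol claim to verifying its two hypotheses for $\bigtree{perm}$. For condition (i), I would exhibit an implementation of $\Call{Child}{\sigma, i}$ running in $O(n)$ time (and $O(n)$ space) using the explicit child characterization already given: a node $\sigma = \tt{p}_1 \cdots \tt{p}_{n-1}$ with missing symbol $\tt{z}$ has $\tt{p}_1 \cdots \tt{p}_{j} \tt{z} \tt{p}_{j+1} \cdots \tt{p}_{n-1}$ as a child exactly when (i) $\tt{p}_j = \tt{z}{-}1$, or (ii) $\tt{p}_j = n$ with $\tt{p}_1 \cdots \tt{p}_{j-1}$ increasing and $\tt{z} < \tt{p}_{j-1}$; each such test is a single scan. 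Since there are only $O(1)$ candidate indices to test per node (at most two children), the total work at each recursive step is $O((t+1)n)$ with $t \le 2$, so condition (i) holds trivially; in fact the work per node is $O(n)$ outright. For condition (ii), I would observe that every representative in $\bigtree{perm}$ is a necklace of a shorthand permutation, hence aperiodic (a shorthand permutation has distinct symbols, so no nontrivial rotation fixes it), meaning there are \emph{no} periodic nodes at all and the ratio of periodic to aperiodic nodes is $0$. Thus Theorem~\ref{thm:algo} applies and gives $O(1)$-amortized time per symbol.

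Finally, for the space bound: Algorithm~\ref{algo:recursive} uses $O(n + H)$ space where $H$ is the height of $\bigtree{perm}$, provided each $\Call{Child}{}$ call uses $O(n)$ space. The height of $\cycletree_{perm}$ (and hence of $\bigtree{perm}$, which has the same underlying tree) is $O(n)$: tracing the parent rule, each step either reduces the missing symbol or moves toward the root $12\cdots(n{-}1)$ in a bounded number of stages, so the depth is $O(n)$. Combining $H = O(n)$ with the $O(n)$ space per recursive frame (the string $\sigma$ can be restored in place after each recursive call, but the recursion stack holds $O(n)$ frames each carrying $O(n)$ ancillary data) gives $O(n^2)$ total space, completing the proof.

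I expect the only mild obstacle to be pinning down the height of $\cycletree_{perm}$ precisely and confirming aperiodicity of all representatives cleanly — both are straightforward but need a careful sentence rather than a hand-wave. Everything else is a mechanical dispatch to Theorem~\ref{thm:main}, Remark~\ref{rem:unique}, and Theorem~\ref{thm:algo}, with the child-computation routine being the only genuinely new (but routine) ingredient.
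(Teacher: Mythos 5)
Your first two parts track the paper exactly: correctness and the successor rule follow from Remark~\ref{rem:unique} (all chains have length at most $2$, so $f_{perm}=\fup=\fdown$) together with Theorem~\ref{thm:main}, and the $O(1)$-amortized time follows from Theorem~\ref{thm:algo} since each node has at most two children, each computable by an $O(n)$ scan (your observation that all representatives are aperiodic, so condition (ii) is vacuous, is correct). The gap is in the space bound: you assert that the height of $\cycletree_{perm}$ is $O(n)$ because ``each step either reduces the missing symbol or moves toward the root in a bounded number of stages.'' This is not what the parent rule does, and the claim is false. When the missing symbol $\tt{z}$ is not $n$, the parent replaces $\tt{z}{+}1$ by $\tt{z}$, so the missing symbol \emph{increases}; only when it reaches $n$ does a ``first descent'' get repaired, at which point the missing symbol drops back to a small value and must climb all the way to $n$ again before the next repair. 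There are up to $\Theta(n)$ such repairs, each preceded by up to $\Theta(n)$ climbing steps, so the depth is quadratic: already for $n=4$ the node $243$ has depth $6$, and for $n=5$ the node $2543$ has depth $11$ ($2543\to1543\to1542\to1532\to1432\to1452\to1352\to1342\to1345\to1245\to1235\to1234$). The paper proves only the upper bound $O(n^2)$ on the height, via the argument that each block of at most $n$ parent applications extends the increasing prefix.

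Your final figure of $O(n^2)$ space is therefore reached only by accident: you pair the (false) height bound $O(n)$ with a wasteful $O(n)$ of ancillary data per recursion frame. With the true height $\Theta(n^2)$, that accounting would give $O(n^3)$, which overshoots the claim. The repair is the paper's route: prove the height is $O(n^2)$ (after at most $n$ parent applications a node gains an ancestor whose increasing prefix, capped by the symbol $n$, is strictly longer; hence at most $n$ such rounds, then $O(n)$ further steps to reach $12\cdots(n{-}1)$), and observe that each recursive frame needs only $O(1)$ ancillary storage --- the string $\alpha$ is shared globally and restored after each call, and the at most two children of a node are described by an index and a symbol value --- so Algorithm~\ref{algo:recursive} uses $O(n+H)=O(n^2)$ space.
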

\begin{proof}
Since each chain in $\cycletree_{perm}$ has length at most $2$, $f_{perm}$ is unique and $f_{perm} = \fup = \fdown$ (see Remark~\ref{rem:unique}).
Thus, by Theorem~\ref{thm:main}, $U_{perm}$ is a universal cycle for $\mathbf{SP}(n)$ with successor rule $f_{perm}$.  As noted earlier, each node $\sigma$ in $\cycletree_{perm}$ has at most two nodes; they can easily be determined in linear time with a single scan of $\sigma$ and the values can be stored using a constant amount of space.  Thus, by Theorem~\ref{thm:algo}, $U_{perm}$ can be constructed in $O(1)$-amortized time per symbol.  The space required by the algorithm is proportional to the height of $\cycletree_{perm}$; each recursive call requires a constant amount of space.  
Consider a node $\sigma = \tt{p}_1\tt{p}_2\cdots \tt{p}_{n-1}$ in $\cycletree_{perm}$, where $j$ is the smallest index such that $\tt{p}_j < \tt{p}_{j-1}$. By applying at most $n$ applications of the parent rule, $\sigma$ has an ancestor whose length-$j$ prefix is increasing and the $j$-th symbol is $n$.  Thus, after at most $n^2$ applications of the parent rule, $\sigma$ has an ancestor $\sigma'$ that is increasing and ends with $n$.  After at most $n$ applications of the parent rule, $\sigma'$ will have the root $r$ as an ancestor.  Thus, the height of $\cycletree_{perm}$ is $O(n^2)$.
\end{proof}

\normalsize

Efficient concatenation constructions of universal cycles for shorthand permutations are  known~\cite{shorthand2,shorthand}; however (i) there is no clear connection between their construction and corresponding successor rule and (ii) they do not have underlying PCR-based cycle-joining trees.  

\subsection{Universal cycles for weak orders} \label{sec:weak}

Recall that $\mathbf{W}(n)$ denotes the set of weak orders of order $n$; it is closed under rotation. Weak orders of order $n$ are counted by the ordered Bell or Fubini numbers (OEIS A000670~\cite{oeis1}). 
The first construction of a universal cycle for $\mathbf{W}(n)$ defined the upcoming PCR-based cycle-joining tree, where the cycle-representatives (nodes) are the lex-smallest representatives (necklaces)~\cite{weakorder}.
Let $\omega = \tt{w}_1\tt{w}_2\cdots \tt{w}_n$ denote a string in $\mathbf{W}(n)$.
Let $n_{\omega}(i)$ denote the number of occurrences of the symbol $i$ in $\omega$. Let $\mathbf{W_1}(n)$ denote the set of all weak orders of order $n$ with no repeating symbols other than perhaps $1$.

\begin{result}
\noindent  \small
{\bf Parent rule for cycle-joining the necklaces in $\mathbf{W}(n)$:}  Let $r$ denote the root  $1^n$. Let $\omega = \tt{w}_1\tt{w}_2\cdots \tt{w}_n$ denote a non-root node.  If $\omega \in \mathbf{W_1}(n)$, let $j$ denote the index of the symbol $n_{\omega}(1)+1$ and let $\tt{x}=1$; otherwise let $j$ be the largest index containing a repeated (non-$1$) symbol  and let $\tt{x} = \tt{w}_j + n_{\omega}(\tt{w}_j)-1$.
Then 
\[ \parent(\omega) = \tt{w}_1\cdots \tt{w}_{j-1}\blue{\tt{x}}\tt{w}_{j+1}\cdots \tt{w}_{n}. \]

\vspace{-0.15in}

 \end{result}

\noindent
Let $\cycletree_{weak}$ be the cycle-joining tree derived from the above parent rule; it clearly satisfies the Chain Property. Figure~\ref{fig:weak} illustrates both $\cycletree_{weak}$ and $\bigtree{weak} = \convert(\cycletree,n,\mathit{left})$ for $n=4$.
A successor rule $f_{weak}$ for the universal cycle based on $\cycletree_{weak}$ can be computed in $O(n)$ time~\cite{weakorder}.

Our goal is to apply Theorem~\ref{thm:algo} to construct an universal cycle for weak orders in $O(1)$-amortized time.
Consider a node $\omega =  \tt{w}_1\tt{w}_2\cdots \tt{w}_n$ in $\cycletree_{weak}$. 
Let $\tt{c}_1\tt{c}_2\cdots \tt{c}_n$ denote a sequence such that  $\tt{c}_i = \tt{x}$ if there exists an $\tt{x}$ such that the necklace of $\tt{w}_1\cdots \tt{w}_{i-1}\blue{\tt{x}}\tt{w}_{i+1}\cdots \tt{w}_n$ is a child of $\omega$ in $\cycletree$, or $-1$ otherwise.  Note that $\tt{x}$ is unique since $\cycletree_{weak}$ satisfies the Chain Property.  

\begin{lemma} \label{lem:noneck}
If $\alpha = \tt{a}_1\tt{a}_2\cdots \tt{a}_n$ is a necklace then $\beta = \tt{a}_j \cdots \tt{a}_n \tt{a}_1\cdots \tt{a}_{i-1}\tt{x}\tt{a}_{i+1}\cdots \tt{a}_{j-1}$ is not a necklace for any 
$1 \leq i < j \leq n$ where   $\tt{x}  < \tt{a}_i$.
\end{lemma}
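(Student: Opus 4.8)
The plan is to exhibit a rotation of $\beta$ that is lexicographically strictly smaller than $\beta$; this shows at once that $\beta$ is not the lexicographically least rotation of its class, hence not a necklace. Write $\alpha^{(k)} = \tt{a}_k\tt{a}_{k+1}\cdots\tt{a}_n\tt{a}_1\cdots\tt{a}_{k-1}$ for the rotation of $\alpha$ beginning at index $k$. The first observation is that $\beta$ is precisely $\alpha^{(j)}$ with the symbol at position $m := n-j+1+i$ lowered from $\tt{a}_i$ to $\tt{x}$; note $m > i$ since $j \le n$. The rotation I would compare against is $\delta := \beta^{(n-j+2)} = \tt{a}_1\tt{a}_2\cdots\tt{a}_{i-1}\,\tt{x}\,\tt{a}_{i+1}\cdots\tt{a}_n$, which is just $\alpha$ itself with the symbol at position $i$ lowered to $\tt{x}$. (Both identities are routine index bookkeeping; the degenerate ranges $i=1$, $j=n$, and an empty trailing block $\tt{a}_{i+1}\cdots\tt{a}_{j-1}$ need a quick check.)

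With these descriptions, proving $\delta < \beta$ amounts to a lexicographic comparison that the necklace hypothesis controls: since $\alpha$ is a necklace, $\alpha \le \alpha^{(j)}$. Let $p$ be the least index at which $\alpha$ and $\alpha^{(j)}$ differ, with the convention $p = n+1$ if they are equal (which happens exactly when $\alpha$ is periodic with period dividing $j-1$); when $p \le n$ we have $\tt{a}_p < \alpha^{(j)}_p$. I would then split on how $p$ compares with $i$. If $p < i$: positions $1,\ldots,p-1$ of $\delta$ and $\beta$ coincide (they lie before the modified positions of both strings, since $p-1 < i < m$, and before the first disagreement of $\alpha$ and $\alpha^{(j)}$), while at position $p$ we get $\delta_p = \tt{a}_p < \alpha^{(j)}_p = \beta_p$. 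If $p \ge i$ (including $p = n+1$): positions $1,\ldots,i-1$ of $\delta$ and $\beta$ coincide, and at position $i$ we get $\delta_i = \tt{x} < \tt{a}_i \le \alpha^{(j)}_i = \beta_i$, where $\tt{a}_i \le \alpha^{(j)}_i$ holds because either $i < p$ (the two are equal) or $i = p$ (so $\tt{a}_i < \alpha^{(j)}_i$). In both cases $\delta < \beta$, completing the argument.

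There is no deep difficulty here; the only genuinely fiddly step --- the one I would write out in full detail --- is the index bookkeeping of the first paragraph: correctly locating the altered symbol at position $m$ inside $\alpha^{(j)}$, verifying that $\delta$ really is a cyclic rotation of $\beta$, and confirming $m > i$ so that position $i$ and the prefix $\tt{a}_1\cdots\tt{a}_{i-1}$ remain unmodified in $\beta$. Once those are pinned down, the rest is a two-case lexicographic comparison, and no separate treatment of periodic $\alpha$ is needed, since the convention $p = n+1$ folds that case into the $p \ge i$ branch.
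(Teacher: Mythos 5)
Your proof is correct and follows essentially the same route as the paper's: both compare $\beta$ against its rotation $\tt{a}_1\cdots\tt{a}_{i-1}\tt{x}\tt{a}_{i+1}\cdots\tt{a}_n$ and exploit $\alpha \le \tt{a}_j\cdots\tt{a}_n\tt{a}_1\cdots\tt{a}_{j-1}$ together with $\tt{x} < \tt{a}_i$. You phrase it as directly exhibiting a strictly smaller rotation (with a careful first-disagreement case split), whereas the paper phrases it as a contradiction between prefix inequalities, but the underlying argument is the same and your bookkeeping is sound.
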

\begin{proof}
Suppose $1 \leq i < j \leq n$.  Since $\alpha$ is a necklace, $\tt{a}_j \cdots \tt{a}_n \tt{a}_1\cdots \tt{a}_{i} \geq \tt{a}_1\cdots \tt{a}_{n-j+i+1}$. If $\beta$ is a necklace then the length $i$ prefix of $\beta$ must be less than or equal to $\tt{a}_1\cdots \tt{a}_{i-1}\tt{x}$ which is less than $ \tt{a}_1\cdots \tt{a}_{i}$ since $\tt{x}  < \tt{a}_i$. Contradiction.
\end{proof}

\begin{lemma} \label{lem:weak}
Let $\omega =  \tt{w}_1\tt{w}_2\cdots \tt{w}_n$ be a necklace in $\mathbf{W}(n) \setminus \mathbf{W}_1(n)$.  Given $1 \leq i \leq n$, if $\tt{w}_i > 1$ let $\tt{y}$ be the largest symbol smaller than $\tt{w}_i$ in $\omega$; otherwise, let $\tt{y} = 1$.
If $\tt{w}_1 \cdots \tt{w}_{i-1}\tt{y}\tt{w}_{i+1}\cdots \tt{w}_n$ is not a necklace then $\tt{c}_i = -1$.
\end{lemma}
\begin{proof}
 From the parent rule, a 1 is never changed to $\tt{x}$.  Thus, if $\tt{y} = 1$ then $\tt{c}_i = -1$.  Suppose $\tt{y} > 1$.  Let $\omega'$ be the necklace of $\tt{w}_1 \cdots \tt{w}_{i-1}\tt{y}\tt{w}_{i+1}\cdots \tt{w}_n$. 
Since $\omega'$ clearly begins with a 1, from Lemma~\ref{lem:noneck}, $\omega'$ starts 
with $\tt{w_j}$ for some $j < i$.
Since $\omega$ and $\omega'$ are both necklaces, $\tt{w}_1\cdots \tt{w}_{i-j} = \tt{w}_j\cdots \tt{w}_{i-1}$.
Consider all occurrences of $\tt{x} \in \omega$.  If $\tt{x}$ does not 
appear before index $j$, then since $\tt{w}_1\cdots \tt{w}_{i-j} = \tt{w}_j\cdots \tt{w}_{i-1}$, it must appear at an index after $i$.  Either way, the $\tt{y}$ slotted into position $i$ of $\omega'$ is not the right most repeated symbol in the corresponding necklace representative $\omega''$, and thus the parent of $\omega''$ is not $\omega$. Thus, from the parent rule, $\tt{c}_i = -1$.
\end{proof}

\noindent
The sequence $\tt{c}_1\tt{c}_2\cdots \tt{c}_n$ can be determined for a necklace $\omega$  by considering the following two cases from the parent rule. 
\begin{enumerate}
    \item Suppose $\omega \in \mathbf{W}_1$.  For each $1 \leq j \leq n$, if $\tt{w}_j = 1$ then $\tt{c}_j = n_{\omega}(1)$; otherwise, $\tt{c}_j = -1$
    \item Suppose $\omega \notin \mathbf{W}_1$. 
    Let $i$ denote the largest index such that $\tt{w}_i > 1$ and $n_{\omega}(\tt{w_i}) > 1$; all symbols in $\tt{w}_{i+1} \cdots \tt{w}_n$ are unique within $\omega$.  
    Consider $1 \leq j \leq n$.  If $\tt{w}_j = 1$, then clearly by the parent rule $\tt{c}_i = -1$. Otherwise, let $\tt{x}$ denote the largest symbol in $\omega$ less than $\tt{w}_j$ and let $\omega' = \tt{w}_1 \cdots \tt{w}_{i-1}\tt{x}\tt{w}_{i+1}\cdots \tt{w}_n$.
    %
    If $1 \leq j < i$, then $\tt{x}$ is not the rightmost (non-1) repeated symbol in $\omega'$, and by the parent rule $\tt{c}_j = -1$.
    If $i=j$, then by the parent rule, then $\tt{c}_i = -1$, since $\tt{w}_i$ is a repeated symbol.
    Suppose $i+1 \leq j \leq n$.  If $\tt{x} = 1$ or $\omega'$ is not a necklace, then by the parent rule and Lemma~\ref{lem:weak}, respectively, $\tt{c}_j = -1$.  Otherwise, suppose $\tt{x} > 1$ and $\omega'$ is a necklace. 
    Then $\tt{c}_j  = \tt{x}$ if $\tt{x}$ does not appear in $\tt{w}_{j+1}\cdots \tt{w}_n$; otherwise,
    $\tt{x}$ is not the rightmost (non-1) repeated symbol in $\omega'$, and thus $\tt{c}_j = -1$.
\end{enumerate} 
Applying the two cases above, Algorithm~\ref{alg:weak} computes the values $\tt{c}_1\tt{c}_2\cdots \tt{c}_n$ for $\omega$.

\begin{algorithm}[h] 
\caption{ Computing $\tt{c}_1\tt{c}_2\cdots \tt{c}_n$ for given node $\omega = \tt{w}_1\tt{w}_2\cdots \tt{w}_n$.}
\label{alg:weak}  

\small
\begin{algorithmic}[1]


    \State $\tt{c}_1\tt{c}_2\cdots \tt{c}_n \gets (-1)^n$

   \If{$\omega \in \mathbf{W}_1(n)$}    ~~~\blue{$\triangleright$ Case 1}
        \For{$i$ {\bf from} $1$ {\bf to} $n$}
        \If{$\tt{w}_i = 1$} $\tt{c}_i \gets n_{\omega}(1)$ \EndIf
        \EndFor
  
   \EndIf
   
   \If{$\omega \notin \mathbf{W}_1(n)$}       ~~~\blue{$\triangleright$ Case 2}
       \State $v_1v_2\cdots v_n \gets 0^n$ ~~~\blue{$\triangleright$ $v_i$ is set to 1 if symbol $i$ is visited in the following loop}
    \For{$i$ {\bf from} $n$ {\bf down to} $1$}  
        \If{$\tt{w}_i > 1 $ {\bf and} $n_\omega(\tt{w}_i) > 1$ }\ {\bf break}
        \Else
                \State $\tt{x} \gets$ the largest symbol in $\omega$ less than $\tt{w}_i$, or 0 if $\tt{w}_i=1$
                

             \If{$\tt{x} > 1$ {\bf and} \Call{IsNecklace}{$\tt{w}_1 \cdots \tt{w}_{i-1}\tt{x}\tt{w}_{i+1}\cdots \tt{w}_n$} {\bf and} $v_{\tt{x}} = 0$ } $\tt{c}_i \gets \tt{x}$ 
            \EndIf
        \EndIf

         \State $v_{\tt{w}_i}  \gets 1$
    \EndFor
\EndIf


\end{algorithmic}
\end{algorithm}


\noindent
If $n\leq 8$, there is at most one call to \Call{IsNecklace}{} on line 11 of Algorithm~\ref{alg:weak} that returns false, for a given input $\omega$.  For $n=9$, there are 10 strings for which the function returns false twice.  One of these strings is $\omega = 147914\rred{8}1\bblue{6}$, where the highlighted symbols correspond to the indices $i$ where such a call returns false.  The corresponding strings tested by the algorithm (in the order tested) are $14791481\bblue{4}$ and $147914\rred{6}16$.  Neither are necklaces.  After the second test, the following lemma demonstrates that the next non-1 symbol $\tt{w}_i$ considered by the {\bf for} loop (line 7) must be a repeated symbol in $\omega$.

\begin{lemma} \label{lem:2neck}
    There are at most two calls to \Call{IsNecklace}{$\tt{w}_1 \cdots \tt{w}_{i-1}\tt{x}\tt{w}_{i+1}\cdots \tt{w}_n$}  in Algorithm~\ref{alg:weak} (line 11) where $\tt{w}_1 \cdots \tt{w}_{i-1}\tt{x}\tt{w}_{i+1}\cdots \tt{w}_n$ is not a necklace.
\end{lemma}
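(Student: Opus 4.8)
The plan is to pin down exactly which calls to \Call{IsNecklace}{} on line~11 can return false, and then show that the loop of Algorithm~\ref{alg:weak} must break immediately after the second such call. Throughout, write $\omega=\tt{w}_1\cdots\tt{w}_n$ for the node (a necklace in $\mathbf{W}(n)\setminus\mathbf{W}_1(n)$); let $m\ge 1$ be the length of the maximal run of $1$s prefixing $\omega$ (so $\tt{w}_1=\cdots=\tt{w}_m=1$ and $\tt{w}_{m+1}>1$); and let $i^*$ be the largest index with $\tt{w}_{i^*}>1$ and $n_\omega(\tt{w}_{i^*})>1$. Then the loop at line~7 processes precisely the indices $i$ with $i^*<i\le n$, every such $i$ with $\tt{w}_i>1$ satisfies $n_\omega(\tt{w}_i)=1$, and the call at line~11 for such an $i$ has argument $\beta_i := \tt{w}_1\cdots\tt{w}_{i-1}\,\tt{x}_i\,\tt{w}_{i+1}\cdots\tt{w}_n$, where $\tt{x}_i>1$ is the largest symbol of $\omega$ below $\tt{w}_i$.

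\emph{Step 1 (characterizing a failed call).} Suppose the call $\Call{IsNecklace}{\beta_i}$ returns false. Since $\omega$ is a necklace and $\tt{x}_i<\tt{w}_i$, Lemma~\ref{lem:noneck} shows that no rotation of $\beta_i$ starting at a position $>i$ is a necklace, so the necklace of $\beta_i$ is the rotation $\beta_i^{(j)}$ beginning at some position $j$ with $1<j<i$; as $\beta_i^{(j)}$ begins with the minimum symbol, $\tt{w}_j=1$ (position $i$ is excluded because $\tt{x}_i>1$). Comparing $\beta_i^{(j)}$ with $\beta_i$ coordinatewise and using the necklace inequality $\omega^{(j)}\ge\omega$, the assumption $\beta_i^{(j)}<\beta_i$ forces $\tt{w}_j\tt{w}_{j+1}\cdots\tt{w}_{i-1}=\tt{w}_1\tt{w}_2\cdots\tt{w}_{i-j}$, i.e. the length-$(i{-}j)$ prefix of $\omega$ recurs at position $j$. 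Moreover the coordinate at position $i{-}j{+}1$ must satisfy $\tt{x}_i\le\tt{w}_{i-j+1}$, and $\tt{x}_i<\tt{w}_{i-j+1}$ would (via $\omega^{(j)}\ge\omega$ at that coordinate, together with the maximality of $\tt{x}_i$ below $\tt{w}_i$) force $\tt{w}_{i-j+1}=\tt{w}_i$, contradicting $n_\omega(\tt{w}_i)=1$. Hence $\tt{x}_i=\tt{w}_{i-j+1}$, and consequently $\tt{w}_i$ is the smallest symbol of $\omega$ exceeding $\tt{x}_i$. The delicate point is that once $\tt{x}_i=\tt{w}_{i-j+1}$ the comparison of $\beta_i^{(j)}$ with $\beta_i$ must be continued into the suffixes of $\omega$; this I would handle exactly as in the proof of Lemma~\ref{lem:noneck}, comparing the relevant suffixes using that $\omega$ is a necklace (this is also the place where periodicity of $\omega$, and hence the acceptable‑range convention, would enter, but it only sharpens the conclusion). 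In all cases a failed call yields a \emph{witness} $j$ with $1<j<i$, $\tt{w}_j=1$, $\tt{w}_j\cdots\tt{w}_{i-1}=\tt{w}_1\cdots\tt{w}_{i-j}$, and $\tt{x}_i=\tt{w}_{i-j+1}$.

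\emph{Step 2 (a structural consequence).} If $i$ has witness $j$, then for each $p\in\{j+1,\dots,i-1\}$ we have $\tt{w}_p=\tt{w}_{p-j+1}$ with $2\le p-j+1\le i-j$ and $p-j+1\ne p$ (as $j>1$), so the symbol $\tt{w}_p$ occurs at least twice in $\omega$; hence $\tt{w}_p>1$ implies $n_\omega(\tt{w}_p)>1$ and therefore $p\le i^*$. Equivalently, every index of $\{j+1,\dots,i-1\}$ that exceeds $i^*$ carries the symbol $1$.

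\emph{Step 3 (at most two failed calls).} Consider a failed call at $i$ with witness $j$, and split on whether $i-j\le m$. If $i-j\le m$, then $\tt{x}_i=\tt{w}_{i-j+1}>1$ forces $i-j+1>m$ (positions $\le m$ carry $1$), so $i-j=m$ and $\tt{x}_i=\tt{w}_{m+1}$; hence $\tt{w}_i$ equals the smallest symbol of $\omega$ exceeding $\tt{w}_{m+1}$, a value determined by $\omega$ alone, and since $i>i^*$ that symbol occurs exactly once, so there is \emph{at most one} failed call of this first kind. If instead $i-j>m$, then the prefix $\tt{w}_1\cdots\tt{w}_{i-j}$, which recurs at $j$, has initial $1$‑run $\tt{w}_1\cdots\tt{w}_m$, so position $j+m\in\{j+1,\dots,i-1\}$ carries the repeated symbol $\tt{w}_{m+1}>1$; hence $j+m\le i^*$, giving $j<i^*$, so $\{i^*{+}1,\dots,i{-}1\}\subseteq\{j{+}1,\dots,i{-}1\}$, and by Step~2 every index the loop visits strictly between $i$ and $i^*$ carries $1$, whence the loop makes no further call before reaching $i^*$ and breaking. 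Thus there is \emph{at most one} failed call of this second kind, and it is the last call the loop makes; combining, the loop produces at most $1+1=2$ failed calls. The only substantial obstacle is the suffix‑continuation analysis in Step~1; everything else is bookkeeping around $i^*$ and the recurring‑prefix structure.
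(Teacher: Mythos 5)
Your proof is correct, and it reaches the bound by a genuinely different route than the paper. The paper's argument takes the two largest indices $j<i$ at which \textsc{IsNecklace} fails, applies Lemma~\ref{lem:noneck} to each to locate the starting positions $t_i,t_j$ of the true necklace rotations, derives the same recurring-prefix identity you label as the witness property, and then plays the two failures off against each other to conclude that after the second failure the next non-$1$ symbol met by the loop must be repeated, so the loop breaks at line~8 before any third call. You instead analyze each failure in isolation against the structure of $\omega$ itself: from a single failure you extract not only the recurring prefix and $\mathtt{w}_j=1$ but also the extra identity $\mathtt{x}_i=\mathtt{w}_{i-j+1}$ (which the paper never uses), and then split on $i-j\le m$ versus $i-j>m$, where $m$ is the initial run of $1$s: at most one failure of the first kind because $\mathtt{w}_i$ is forced to be the unique smallest symbol of $\omega$ exceeding $\mathtt{w}_{m+1}$ and occurs only once, and at most one of the second kind because the recurring prefix forces every index strictly between $i^*$ and $i$ to carry a $1$, so that failure is the last call before the break. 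Both arguments hinge on Lemma~\ref{lem:noneck} plus the prefix-equality comparison, but your per-failure classification is self-contained and makes explicit where the loop terminates in each case, whereas the paper couples the two failures to one another.

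One remark: the ``delicate point'' you flag in Step~1 is not a gap, because nothing you use later depends on it. The four witness properties ($1<j<i$, $\mathtt{w}_j=1$, the recurring prefix, and $\mathtt{x}_i=\mathtt{w}_{i-j+1}$) are already fully established by the coordinatewise comparison up to position $i-j+1$ together with $\omega^{(j)}\ge\omega$, the maximality of $\mathtt{x}_i$, and $n_\omega(\mathtt{w}_i)=1$; you never need to know where $\beta_i^{(j)}$ and $\beta_i$ first differ beyond that point, so no suffix continuation (and no appeal to the acceptable range, which plays no role in this purely algorithmic counting lemma) is required.
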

\begin{proof}
    We trace Algorithm~\ref{alg:weak} and the {\bf for} loop (line 7) noting that $\omega = \tt{w}_1\tt{w}_2\cdots \tt{w}_n$ is a weak order necklace representative not in $\mathbf{W}_1(n)$.  We demonstrate that if \Call{IsNecklace}{} returns false twice, then the
    next iteration of the loop where $\tt{w}_i > 1$ must have
    $n_{\omega}(\tt{w}_i) > 1$.  Thus, the loop breaks (line 8) and there are no further calls to \Call{IsNecklace}{}.

    Consider two iterations of the {\bf for} loop (line 7) where the iterator has value $i$ and $j$, respectively, such that both iterations make a call
    to \Call{IsNecklace}{} (line 11) that returns false.  Furthermore, assume $j<i$ are the two largest values such that this is the case.
    Let $\alpha_i = \tt{w}_1 \cdots \tt{w}_{i-1}\tt{x}_i\tt{w}_{i+1}\cdots \tt{w}_n$ noting that
    $\tt{w}_i > \tt{x}_i > 1$ (lines 10 and 11) and $n_{\omega}(\tt{w}_i) = 1$ (line 8).
    Since $\alpha_i$ is not a necklace, by Lemma~\ref{lem:noneck}, there exists some largest index $1 \leq t_i \leq i$ such that the rotation of $\alpha$ starting from index $t_i$ is a necklace.  Thus, $\tt{w}_{t_i}\cdots \tt{w}_{i-1} \leq \tt{w}_1\cdots \tt{w}_{i-t_i}$. Since $\omega$ is not a necklace,  $\tt{w}_{t_i}\cdots \tt{w}_{i-1} \geq \tt{w}_1\cdots \tt{w}_{i-{t_i}}$.  Thus, $\tt{w}_{t_i}\cdots \tt{w}_{i-1} = \tt{w}_1\cdots \tt{w}_{i-{t_i}}$ (*).  Define $\alpha_j$, $\tt{x}_j$, and $t_j$ similarly, which means $\tt{w}_j  > \tt{x}_j > 1$ and $n_{\omega}(\tt{w}_j) = 1$.
    If $t_i \leq j <  i$, then  $n_{\omega}(\tt{w}_j) > 1$ by (*), a contradiction.  
    Thus, $t_j \leq j < t_i$.  Since $n_{\omega}(\tt{w}_j) = 1$,  $\tt{w}_j \neq \tt{w}_i$. 
    
\end{proof}

Let $U_{weak} = \RCL(\bigtree{weak})$.
\begin{theorem}
$U_{weak}$ is a universal cycle for $\mathbf{W}(n)$ with successor rule $f_{weak}$.  Moreover, $U_{weak}$ can be constructed in $O(1)$-amortized time per symbol using $O(n^2)$ space.
\end{theorem}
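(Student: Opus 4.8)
The plan is to split the statement into its correctness half and its complexity half, and to reduce each to a result already proved in the paper. For correctness, first note that $\mathbf{S}_{\cycletree_{weak}} = \mathbf{W}(n)$, since the nodes of $\cycletree_{weak}$ are precisely the necklace representatives of $\mathbf{W}(n)$ and $\mathbf{W}(n)$ is closed under rotation, so $\bigcup_{\omega}[\omega] = \mathbf{W}(n)$. Next I would check that every chain in $\cycletree_{weak}$ has length at most two: a short case analysis on whether the relevant nodes lie in $\mathbf{W_1}(n)$ shows that the parent rule pins down both the change index and the replacement symbol so tightly (through the multiplicities of the ranks involved) that no three necklaces of $\mathbf{W}(n)$ can pairwise differ at a single fixed coordinate, as a chain of length three would require. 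By Remark~\ref{rem:unique}, $\cycletree_{weak}$ then induces a \emph{unique} universal cycle whose successor rule is $\fup = \fdown$, and since $f_{weak}$ is (by definition, from~\cite{weakorder}) a successor rule for that universal cycle, $f_{weak} = \fup = \fdown$. Applying Theorem~\ref{thm:main} to $\bigtree{weak} = \convert(\cycletree_{weak},n,\mathit{left})$ (a left concatenation tree) then gives that $U_{weak} = \RCL(\bigtree{weak})$ is a universal cycle for $\mathbf{W}(n)$ with successor rule $\fup = f_{weak}$.

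For the timing claim I would run Algorithm~\ref{algo:recursive} on $\bigtree{weak}$, implementing \Call{Child}{$\omega,i$} by computing the entire vector $\tt{c}_1\tt{c}_2\cdots\tt{c}_n$ once per node via Algorithm~\ref{alg:weak} (restricted to the acceptable range of $\omega$), and then invoke Theorem~\ref{thm:algo}. Condition (ii) holds because the periodic necklaces of $\mathbf{W}(n)$ are vastly outnumbered by the aperiodic ones --- a periodic weak order of order $n$ is obtained by repeating a block of length a proper divisor of $n$, so the count of periodic necklaces is exponentially smaller than $|\mathbf{W}(n)|$, which is asymptotically the count of aperiodic ones. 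Condition (i) is the substantive point: apart from $O(n)$ bookkeeping, Algorithm~\ref{alg:weak} performs an $O(n)$-time call to \Call{IsNecklace}{} at an index $i$ only when the cheap guards $\tt{w}_i > 1$, $n_\omega(\tt{w}_i) = 1$, $\tt{x} > 1$, $v_{\tt{x}} = 0$ all hold; each test that succeeds marks a child of $\omega$, so there are at most $t$ successful tests, and by Lemma~\ref{lem:2neck} there are at most two tests that fail. Hence the work at a node with $t$ children is $O((t+1)n)$, and Theorem~\ref{thm:algo} delivers the $O(1)$-amortized bound.

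For the space bound, each recursion level of Algorithm~\ref{algo:recursive} uses $O(n)$ space (the current string plus the vector $\tt{c}_1\cdots\tt{c}_n$), so the total is $O(nH)$ with $H$ the height of $\cycletree_{weak}$. I would bound $H = O(n)$ directly from the parent rule: from any node $\omega \notin \mathbf{W_1}(n)$ each application strictly decreases $\sum_{r > 1}(n_\omega(r) - 1) < n$, so at most $n$ steps reach $\mathbf{W_1}(n)$; from there each application strictly increases $n_\omega(1)$, so at most $n$ further steps reach the root $1^n$. Thus $H = O(n)$ and the algorithm uses $O(n^2)$ space.

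The main obstacle is condition (i) of Theorem~\ref{thm:algo}. A priori the right-to-left scan of Algorithm~\ref{alg:weak} could fire the $O(n)$-time necklace test $\Theta(n)$ times at a node with only $O(1)$ children, destroying the amortization; since the successful tests are in bijection with children and therefore harmless, everything hinges on bounding the number of \emph{failed} tests, which is exactly Lemma~\ref{lem:2neck}. Its proof --- via Lemma~\ref{lem:noneck} and the observation that after two failed tests the next non-$1$ symbol scanned must be a repeated symbol, terminating the loop --- is the real work; correctness via Theorem~\ref{thm:main}, condition (ii), and the height estimate are comparatively routine.
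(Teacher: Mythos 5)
Your proposal is correct and follows essentially the same route as the paper's proof: chains of length two plus Remark~\ref{rem:unique} and Theorem~\ref{thm:main} for correctness, and Algorithm~\ref{algo:recursive} with Algorithm~\ref{alg:weak}, Lemma~\ref{lem:2neck} (bounding failed \textsc{IsNecklace} calls) and Theorem~\ref{thm:algo} for the $O(1)$-amortized time, with the same $O(n)$ height argument giving $O(n^2)$ space. Your only additions --- explicitly verifying condition (ii) of Theorem~\ref{thm:algo} on periodic versus aperiodic nodes and phrasing the height bound via a potential function --- are consistent with, and slightly more explicit than, what the paper leaves implicit.
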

\begin{proof}
Based on the parent rule for $\cycletree_{weak}$, every chain in $\bigtree{weak}$ has length $m=2$.  From Remark~\ref{rem:unique}, $f_{weak} = \fup = \fdown$, and Theorem~\ref{thm:main} implies that $U_{weak}$ is a universal cycle for $\mathbf{W}(n)$ with successor rule $f_{weak}$.  

To generate $U_{weak}$, Algorithm~\ref{algo:recursive}
can apply Algorithm~\ref{alg:weak} to determine the children of a node by computing and storing $\tt{c}_1\tt{c}_2\cdots \tt{c}_n$; each recursive call requires $O(n)$ space.  The height of $\cycletree_{weak}$ is $O(n)$;  the path from any leaf to the root requires less than $n$ applications of $\parent(\omega)$ to break all the non-1 ties, and then less than $n$ applications of $\parent(\omega)$ to convert all the non-1s to 1s.  Thus, the construction requires $O(n^2)$ space.  The time to generate $U_{weak}$ depends on how efficiently we can compute $\tt{c}_1\tt{c}_2\cdots \tt{c}_n$ in Algorithm~\ref{alg:weak}.  The
values $n_\omega(\tt{w}_i)$ can be computed in $O(n)$ time cumulatively.  Applying these values, the cumulative time to compute $\tt{x}$ at line 10 is also $O(n)$, since each $\tt{w}_i > 1$ must be unique by line 8.  Thus, excluding the calls to \Call{IsNecklace}{}, the algorithm runs in $O(n)$ time.
Each call to \Call{IsNecklace}{} (which requires $O(n)$ time~\cite{Booth}) that returns true leads to a child (some $c_i > -1$).  From Lemma~\ref{lem:2neck}, there at most two calls to \Call{IsNecklace}{} that return false.  
Thus, the total work by Algorithm~\ref{alg:weak} is $O((t+1)n)$, where $t$ is the number of children of the input node $\omega$.
Hence by Theorem~\ref{thm:algo}, $U_{weak}$ can be constructed in $O(1)$-amortized time.
\end{proof}

\noindent

\subsection{Orientable sequences} \label{sec:orient}

An \defo{orientable sequence} is a universal cycle for a set $\mathbf{S} \subseteq \{0,1\}^n$ such that if $\tt{a}_1\tt{a}_2\cdots \tt{a}_n \in \mathbf{S}$, then its reverse
$\tt{a}_n\cdots \tt{a}_2\tt{a}_1 \notin \mathbf{S}$.  Thus, $\mathbf{S}$ does not contain palindromes. Orientable sequences do not exist for $n < 5$, and a maximal length orientable sequence for $n=5$ is $001011$. Somewhat surprisingly, the maximal length of binary orientable sequences are known only for $n=5,6,7$.  Orientable sequences were introduced in~\cite{Dai} with applications related to robotic position sensing.  They established upper and lower bounds for their maximal length; the lower bound is based on the \emph{existence} of a PCR-based cycle-joining tree, though no construction of such a tree was provided. 

A \defo{bracelet class} is an equivalence class of strings under rotation and reversal; its lexicographically smallest representative is a \defo{bracelet}.  A bracelet $\alpha$ is \defo{symmetric}
if $[\alpha] = [\alpha^R]$; otherwise it is \defo{asymmetric}.  
Let $\A(n)$ denote the set of all binary asymmetric bracelets of length $n$.  For example, 
$\A(8) = \{ 
00001011,
00010011,
00010111,
00101011,
00101111,
00110111\}.$
If $\A(n) = \{\alpha_1, \alpha_2, \ldots, \alpha_t\}$, let $\mathbf{O}(n) =   [\alpha_1]  \cup [\alpha_2] \cup \cdots \cup [ \alpha_t ]$.

Motivated by the work in~\cite{Dai}, a cycle-joining tree $\cycletree_{orient}$ for $\A(n)$ was discovered leading to the construction of an orientable sequence with asymptotically optimal length~\cite{orient}.  The parent rule combines three of the four ``simple'' parent rules defined earlier for PCR-based cycle joining trees; it applies the following functions where $\alpha = \tt{a}_1\tt{a}_2\cdots \tt{a}_n \in \A(n)$: 
\begin{itemize}
    \item $\firstone(\alpha)$ be the necklace $\tt{a}_1\cdots\tt{a}_{i-1}\bblue{0}\tt{a}_{i+1}\cdots \tt{a}_n$, where $i$ is the index of the first $1$ in $\alpha$;
    \item $\lastone(\alpha)$ be the necklace in $[\tt{a}_1\tt{a}_2 \cdots \tt{a}_{n-1}\bblue{0}]$;  
    \item $\lastzero(\alpha)$ be the necklace $\tt{a}_1\cdots\tt{a}_{j-1}\rred{1}\tt{a}_{j+1}\cdots \tt{a}_n$, where $j$ is the index of the last $0$ in $\alpha$.
\end{itemize}

\begin{result}
\noindent  \small
{\bf Parent rule for cycle-joining $\A(n)$:} 
Let $r$ denote the root $0^{n-4}1011$.  Let $\alpha$ denote a non-root node in $\A(n)$.
Then 
\[ \parent(\alpha) =  \mbox{ the first asymmetric bracelet in the list:  }
 ~\firstone(\alpha),~  \lastone(\alpha),~ \lastzero(\alpha). \]

\vspace{-0.1in}

 \end{result}

\noindent
Let $\cycletree_{orient}$ be the cycle-joining tree derived from the above parent rule. 
Figure~\ref{fig:orient} illustrates $\cycletree_{orient}$ together with  $\bigtree{orient} = \convert(\cycletree_{orient},n,\mathit{right})$ for $n=8$; an RCL traversal of $\bigtree{orient}$ produces the following orientable sequence of length 48: 
\[ 
00001011~
11001011~
10011011~
10001001~
10001011~
00101011. \]

%

\begin{figure}[h]
    \centering
\resizebox{5.0in}{!}{\includegraphics{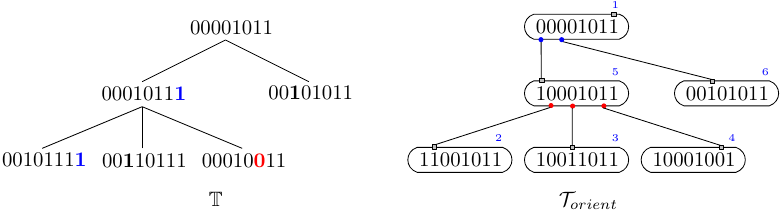}}
    \caption{ A cycle-joining tree for $\A(8)$  based on the parent rule $\parent(\alpha)$ followed by a corresponding right concatenation tree $\bigtree{orient}$ that illustrates the RCL order. }
       \label{fig:orient}
\end{figure}

A successor rule $f_{orient}$ based on $\cycletree_{orient}$ constructs the corresponding orientable sequence in $O(n)$-time per symbol~\cite{orient}. 
The following result is proved by applying Theorem~\ref{thm:main} and Theorem~\ref{thm:algo},
where $U_{orient} = \RCL(\bigtree{orient})$.

\begin{theorem}[\cite{orient}]
$U_{orient}$ is a universal cycle for $\mathbf{O}(n)$ with successor rule $f_{orient}$.  Moreover, $U_{orient}$ can be constructed in $O(1)$-amortized time per symbol using $O(n^2)$ space.
\end{theorem}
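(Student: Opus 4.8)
The plan is to follow the same two-step template already used above for shorthand permutations and weak orders: first invoke Theorem~\ref{thm:main} to establish that $U_{orient}$ is a universal cycle for $\mathbf{O}(n)$ with the stated successor rule, and then verify the two hypotheses of Theorem~\ref{thm:algo} to obtain the $O(1)$-amortized running time and the $O(n^2)$ space bound.

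For the correctness half, note that $\cycletree_{orient}$ is a PCR-based cycle-joining tree over the binary alphabet, so it trivially satisfies the Chain Property and, since $k=2$, it induces a unique universal cycle with $f_0 = \fup = \fdown$ (Remark~\ref{rem:unique}). Its nodes are the asymmetric bracelets $\A(n) = \{\alpha_1,\ldots,\alpha_t\}$, each representing its necklace class $[\alpha_i]$, so $\mathbf{S}_{\cycletree_{orient}} = [\alpha_1]\cup\cdots\cup[\alpha_t] = \mathbf{O}(n)$; this set is reverse-free by construction (one necklace class is chosen per asymmetric bracelet class), so any universal cycle for it is an orientable sequence. Applying Theorem~\ref{thm:main} to $\tree_2 = \convert(\cycletree_{orient},n,\mathit{right}) = \bigtree{orient}$ then gives that $U_{orient} = \RCL(\bigtree{orient})$ is a universal cycle for $\mathbf{O}(n)$ with successor rule $\fdown = f_0$; this coincides with the rule $f_{orient}$ of~\cite{orient} because both are the unique successor rule of the universal cycle determined by $\cycletree_{orient}$.

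The algorithmic half is where the real work lies. I would implement the oracle \Call{Child}{$\alpha,i$} of Algorithm~\ref{algo:recursive} for a node $\alpha = \tt{a}_1\cdots\tt{a}_n \in \A(n)$: a string $\gamma$ is a child of $\alpha$ precisely when $\gamma$ arises from $\alpha$ by a single bit flip at an index in $\alpha$'s acceptable range and $\alpha$ is the \emph{first} asymmetric bracelet in the priority list $\firstone(\gamma),\,\lastone(\gamma),\,\lastzero(\gamma)$. Each of the three operations is a single bit flip followed by rotation to canonical form, hence is inverted by a single bit flip at a position the run structure of $\alpha$ confines to an easily localized (at worst $O(n)$-sized) candidate set; so only $O(n)$ candidate children need be examined, and for each the test is whether $\gamma$ is an asymmetric bracelet and whether every higher-priority map fails on $\gamma$. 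The cost is dominated by calls to \Call{IsNecklace}{}/\Call{IsBracelet}{} (each $O(n)$ time), and the crux is a lemma — directly analogous to Lemma~\ref{lem:2neck}, proved by the same kind of rotation/run-structure argument as Lemma~\ref{lem:noneck} and Lemma~\ref{lem:weak} — asserting that for a fixed $\alpha$ only an absolute-constant number of these calls return \false. Granting that, the total work at $\alpha$ is $O((t{+}1)n)$ with $t$ the number of children, which is hypothesis (i); I expect establishing this amortized bound to be the main obstacle, whereas the cumulative bookkeeping of run lengths and symbol counts over a single scan of $\alpha$ is routine $O(n)$ work. Hypothesis (ii) holds because the number of periodic binary strings of length $n$, hence of periodic asymmetric bracelets, is at most a constant multiple of the aperiodic ones, so the output has length $\Theta(n)$ times the number of nodes. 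Finally the height of $\cycletree_{orient}$ is $O(n)$ (bounding, along any root-to-leaf path, the number of parent steps of each of the three types, as in~\cite{orient}), and each recursive call of Algorithm~\ref{algo:recursive} stores only the length-$n$ array $\tt{c}_1\cdots\tt{c}_n$ together with $O(1)$ scalars, giving $O(n^2)$ space. Combining these with Theorem~\ref{thm:algo} yields the $O(1)$-amortized time per symbol, completing the argument.
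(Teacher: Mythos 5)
Your proposal follows exactly the route the paper itself takes: the paper gives no proof beyond the remark that the result ``is proved by applying Theorem~\ref{thm:main} and Theorem~\ref{thm:algo}'' (with details deferred to~\cite{orient}), which is precisely your two-step template of using the binary Chain Property and Remark~\ref{rem:unique} with Theorem~\ref{thm:main} for correctness, and the hypotheses of Theorem~\ref{thm:algo} for the time and space bounds. Your sketch is in fact more detailed than the paper's treatment; the one step you explicitly leave unproven (the constant bound on failed \textsc{IsNecklace}/\textsc{IsBracelet} tests needed for hypothesis (i), analogous to Lemma~\ref{lem:2neck}) is likewise not proved in this paper but is exactly the ingredient supplied by the cited work.
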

\noindent
\normalsize

\bibliographystyle{acm.bst}
\bibliography{refs}

\newpage

\appendix

\end{document}